\tikzset{>=latex}
\def\fe{\ensuremath\mathcal{E}}
\def\int{\ensuremath\text{Int}}
\def\ecc{\ensuremath\text{Ecc}}
\def\qcc{\ensuremath\text{Qcc}}
\theoremstyle{definition}
\theoremstyle{plain}
\newtheorem{thm}{Theorem}
\newtheorem{cor}[thm]{Corollary}
\newtheorem{lem}[thm]{Lemma}
\newtheorem{conj}[thm]{Conjecture}
\newtheorem{obs}[thm]{Observation}
\title{Maximal planar graphs that embed as centers}
\author{Brandon Du Preez \\
	brandon.dupreez@uct.ac.za\\
	Laboratory for Discrete Mathematics and Theoretical Computer Science\\
	Department of Mathematics and Applied Mathematics\\
	University of Cape Town}
\begin{document}
	\maketitle
	
	\begin{abstract}
		\noindent A maximal planar graph is a graph which can be embedded in the plane such that every face of the graph is a triangle. 
		The center of a graph is the subgraph induced by the vertices of minimum eccentricity.
		We introduce the notion of \textit{quasi-eccentric} vertices, and use this to characterize maximal planar graphs that are the center of some planar graph.
		We also present some easier to check only necessary / only sufficient conditions for planar and maximal planar graphs to be the center of a planar graph.
		Finally, we use the aforementioned characterization to prove that all maximal planar graphs of order at most 8 are the center of some planar graph --- and this bound is sharp.
	\end{abstract}

\section{Definitions and introduction}
A graph is \textbf{maximal planar} if it is planar, but the addition of any edge destroys planarity. 
An embedding of a maximal planar graph into the plane is a \textbf{maximal plane graph}. 
A plane graph of order at least three is maximal plane if and only if every face of the graph is bounded by a 3-cycle.

If $H$ is a path or cycle in some graph, let $\ell(H) = |E(H)|$ denote the \textbf{length} of $H$.
If $G$ is a graph, we use $V(G)$ and $E(G)$ to refer to the sets of vertices and edges of $G$, respectively.
Let $G$ and $H$ be graphs.
The \textbf{Cartesian Product} $G\times H$ is the graph with vertex set $V(G\times H) = \{(u,v): u\in V(G), v\in V(H) \}$ and edge set $E(G\times H) = \{(u,v)(u',v'): (u=u' \text{ and } vv'\in E(H)) \text{ or } (v=v' \text{ and } uu' \in E(G)) \}$.
The \textbf{union} of $G$ and $H$ is the graph $G\cup H = (V(G)\cup V(H), E(G)\cup E(H))$, and the \textbf{intersection} is the graph $G\cap H = (V(G)\cap V(H), E(G)\cap E(H))$.
If $G$ is a plane graph and $f$ is a face of $G$, then $G[f]$ denotes the graph consisting of all the edges and vertices of $G$ that lie on the boundary of $f$.
If $S$ and $T$ are sets of vertices of $G$, then $T$ \textbf{dominates} $S$ if $S\subseteq N[T]$.
If $T$ is the set of vertices on the boundary of a face $f$, we say that the face $f$ dominates $S$.

Let $G=(V,E)$ be a simple graph, let $u$ and $v$ be vertices of $G$, and let $S$ be a subset of $V$. 
For all the definitions to follow, we omit the subscript $G$ if the graph in question is unambiguous.
The \textbf{induced subgraph} $G[S]$ is the subgraph of $G$ with vertex set $S$, such that two vertices of $S$ are adjacent in $G[S]$ if and only if they are adjacent in $G$.
If the induced subgraph $G[V-S]$ is disconnected, then $S$ \textbf{separates} $G$, and we call $S$ a \textbf{separating set}.
The \textbf{distance} between $u$ and $v$ in $G$, $d_G(u,v)$, is the length of a shortest $u-v$ path in $G$.
Such a path is a $u-v$ \textbf{geodesic}.
If $A$ and $B$ are subsets of $V$, the distance between these sets is given by:
\[
d_G(A,B) = \min\{d(a,b) :a\in A, b\in B \}.
\]
We let $d_G(v,A)$ = $d_G(\{v\}, A)$.
If $H$ and $K$ are subgraphs of $G$, we use the notation $d_G(H,K)$ to refer to the distance $d_G(V(H), V(K))$.
Let $C$ be a cycle of $G$, and let $u$ and $v$ be vertices of $C$.
An edge $uv$ of $E(G)-E(C)$ is a {$\bm{k}$}\textbf{-chord} if $d_C(u,v) = k$.
The \textbf{eccentricity} of $u$ in $G$ is $e_G(u) = \text{max}\{d_G(u,x) : x\in V \}$.
The \textbf{radius} and \textbf{diameter} of $G$, denoted $\text{rad}(G)$ and $\text{diam}(G)$, are the minimum and maximum eccentricities among the vertices of $G$, respectively.
The \textbf{center} of $G$ is the subgraph induced by the vertices of minimum eccentricity.
A \textbf{peripheral vertex} is a vertex whose eccentricity is equal to the graph's diameter, and a \textbf{central vertex} is a vertex whose eccentricity is equal to the graph's radius.

The $i^{th}$ \textbf{eccentricity layer} of $G$, $\mathcal{E}_G(i)$, is the set of all vertices of $G$ with eccentricity $i$.
A subgraph $H$ of $G$ is \textbf{equi-eccentric} in $G$ if there is some integer $i$ such that $V(H)\subseteq \mathcal{E}_G(i)$.
Clearly, the center of a graph is an equi-eccentric subgraph.
A subgraph $H$ of $G$ is \textbf{isometric} if, for all pairs of vertices $u$ and $v$ in $H$, we have $d_H(u,v)=d_G(u,v)$.
If $G$ is a planar graph, and $H$ is a subgraph of $G$ which is maximal planar, then $H$ is always isometric in $G$ \cite{casablanca_centersmpg}.

\begin{lem}\textup{\cite{casablanca_centersmpg}}
	\label{lem:mpgiso}
	Every maximal planar subgraph of a planar graph is isometric.
\end{lem}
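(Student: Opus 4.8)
The plan is to prove the two inequalities $d_G(u,v) \le d_H(u,v)$ and $d_H(u,v) \le d_G(u,v)$ for every pair $u,v \in V(H)$. The first is immediate: since $H$ is a subgraph of $G$, every $u-v$ path in $H$ is also a $u-v$ path in $G$, so a shortest $H$-path witnesses $d_G(u,v) \le d_H(u,v)$. All the content lies in the reverse inequality, for which I would take a $u-v$ geodesic $P$ in $G$ and argue that it can be rerouted through $V(H)$ without increasing its length. A useful preliminary observation is that $H$ must be an \emph{induced} subgraph of $G$: if $G$ contained an edge $e$ joining two vertices of $H$ with $e \notin E(H)$, then $H+e$ would be a planar graph (a subgraph of the planar graph $G$) properly containing $H$, contradicting maximality.

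For the reverse inequality, I would fix a plane embedding of $G$ and restrict it to $H$. Since $H$ is maximal planar, this restriction is a maximal plane graph, and (the cases $|V(H)| \le 2$ being trivial) the characterization quoted above shows that every face of $H$ in this embedding is bounded by a triangle. Consequently the edges of $H$ partition the plane into triangular regions, and every vertex of $G$ outside $V(H)$ lies in the interior of exactly one such face. Now write the geodesic as $P : u = x_0, x_1, \dots, x_k = v$ with $k = d_G(u,v)$, and consider a maximal subpath $x_i, x_{i+1}, \dots, x_j$ whose internal vertices $x_{i+1}, \dots, x_{j-1}$ all avoid $V(H)$ while $x_i, x_j \in V(H)$. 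The key claim is that these internal vertices all lie in the interior of a single triangular face $f$ of $H$, and that both $x_i$ and $x_j$ lie on $\partial f$.

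To establish the claim I would use planarity: if two consecutive internal vertices lay in the interiors of different faces of $H$, then the edge of $G$ joining them, whose interior contains no vertex, would have to cross an edge of $H$, contradicting that the embedding of $G$ is plane; and since $x_i$ is joined by an edge to the first interior vertex $x_{i+1} \in \operatorname{int}(f)$, the vertex $x_i$ must lie on $\partial f$, and symmetrically for $x_j$. As $f$ is a triangle and $x_i \ne x_j$ (a geodesic has no repeated vertices), the vertices $x_i$ and $x_j$ are two corners of a triangular face, hence adjacent in $H$; that is, $x_i x_j \in E(H)$. Replacing each such excursion by the single edge $x_i x_j$, and using the induced-subgraph observation to see that each within-$H$ step $x_i x_{i+1}$ is itself an edge of $H$, turns $P$ into a walk from $u$ to $v$ that uses only edges of $H$ and has length at most $k$ (each excursion of length at least two is shortened to length one). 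This yields $d_H(u,v) \le k = d_G(u,v)$, completing the argument. I expect the main obstacle to be making this geometric step fully rigorous, that is, converting the intuitive assertion that ``a path cannot cross the edges of $H$'' into a clean topological argument that confines each excursion to one triangular face and pins its endpoints to that face's boundary.
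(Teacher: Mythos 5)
Your proposal is correct and takes essentially the same approach as the paper: both rest on the observations that maximality forces $H$ to be induced in $G$, that every vertex of $G$ outside $H$ lies inside a triangular face of $H$, and that a subpath passing through such a face can be shortcut by the edge joining the two boundary vertices where it enters and leaves. The only difference is presentational --- the paper argues by contradiction, shortcutting a single excursion of a geodesic, while you reroute all excursions directly (and spell out the topological confinement step more carefully than the paper does).
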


\begin{proof}
	Assume to the contrary that $H$ is a maximal planar subgraph of a planar graph $G$ and that $H$ contains vertices $u$ and $v$ with $d_G(u,v) < d_H(u,v)$, and let $P$ be a $u-v$ geodesic in $G$. 
	Because $H$ is maximal planar, it is an induced subgraph of $G$, so $P$ contains a vertex $w$ in $V(G)-V(H)$. 
	Since the vertex $w$ lies in a face $f: x,y,z$ of $H$, the path $P$ contains at least two of the vertices on the boundary of $f$, say $x$ and $y$. 
	We can thus replace the segment of $P$ from $x$ to $y$ with the edge $xy$ to obtain a shorter $u-v$ path, which yields a contradiction.
\end{proof}

It is well known that every graph $G$ is the center of some graph $H$ \cite{bms:center}.
One method to construct $H$ is as follows: add four vertices $a, b, c, d$ to $G$, and make $a, b$ adjacent to each vertex of $G$, $c$ adjacent to $a$, and $d$ adjacent to $b$ (See Figure \ref{fig:hedetniemi_const}). 
However, even if $G$ is planar, the graph $H$ constructed in \cite{bms:center} having $G$ as its center is not planar if $G$ contains any vertex of degree at least three \cite{casablanca_centersmpg}.

In fact, there exist (maximal) planar graphs which cannot be the center of any planar graph. 
For example, the graph in Figure \ref{fig_cycle_not_qef} is not contained in the center of any planar graph. 

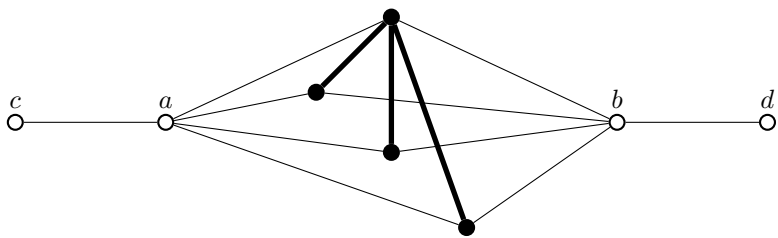
\begin{figure}[h]
	\begin{center}
		\begin{tikzpicture}
			[scale=1,inner sep=0.7mm, 
			vertex/.style={circle,thick,draw}, 
			thickedge/.style={line width=2pt}] 
			
			\node[vertex] (c) at (0,0) [label=90:{$c$}] {};
			\node[vertex] (a) at (2,0) [label=90:{$a$}] {};
			\node[vertex] (b) at (8,0) [label=90:{$b$}] {};
			\node[vertex] (d) at (10,0) [label=90:{$d$}] {};
			
			\node[vertex, fill=black] (1) at (4,0.4) {};
			\node[vertex, fill=black] (2) at (5,1.4) {};
			\node[vertex, fill=black] (3) at (6,-1.4) {};
			\node[vertex, fill=black] (4) at (5,-0.4) {};
			
			\draw (a)--(1)--(b) (a)--(2)--(b) (a)--(3)--(b) (a)--(4)--(b);
			\draw (c)--(a) (b)--(d);
			\draw[thickedge] (1)--(2)--(3) (2)--(4);
			
		\end{tikzpicture}
	\end{center}
	\caption{Given any graph $G$, the Hedetniemi construction yields a graph $H(G)$ with $G$ as its center. 
		In the example above, the vertices and edges of $G$ are bold.}
	\label{fig:hedetniemi_const}
\end{figure}

A natural starting point for investigating centers of graphs is to consider graphs which are their own centers. 
In \cite{buckley:selfcentered}, Buckley gives a survey of results and topics concerning such graphs. 
Jarry and Laugier give a proof of a strengthening of Buckley's theorem bounding the number of edges in a self-centered graph in \cite{AJAL}.

The centers of a number of graph classes, including maximal outerplanar graphs and chordal graphs, have already been described.
An \textbf{outerplanar graph} is a planar graph which can be embedded in the plane such that every vertex is on the boundary of the outer face of the graph. 
A \textbf{maximal outerplanar graph} is an outerplanar graph to which the addition of any edge results in a graph that is not outerplanar.
In \cite{prosk:centermop}, Proskurowski shows that the center of any maximal outerplanar graph is one of seven graphs, all of which are themselves maximal outerplanar (See Figure \ref{fig:mop_centers}).

\begin{figure}[h]
	\begin{center}
		\begin{tikzpicture}
			[scale=1,inner sep=0.7mm, 
			vertex/.style={circle,thick,draw}, 
			thickedge/.style={line width=2pt}] 
			
			\node[vertex] (11) at (0,0) {};
			
			\node[vertex] (21) at (1,0.5) {};
			\node[vertex] (22) at (1,-0.5) {};
			\draw (21)--(22);
			
			\node[vertex] (31) at (2,-0.5) {};
			\node[vertex] (32) at (3,-0.5) {};
			\node[vertex] (33) at (2.5,0.5) {};
			\draw (31)--(32)--(33)--(31);
			
			\node[vertex] (41) at (4,-0.5) {};
			\node[vertex] (42) at (5,-0.5) {};
			\node[vertex] (43) at (4.5,0.5) {};
			\node[vertex] (44) at (5.5,0.5) {};
			\draw (41)--(42)--(43)--(41) (43)--(44)--(42);
			
			\begin{scope}[shift={(6.5,0)}]
				\node[vertex] (1) at (0,-0.5) {};
				\node[vertex] (2) at (1,-0.5) {};
				\node[vertex] (3) at (0.5,0.5) {};
				\node[vertex] (4) at (1.5,0.5) {};
				\node[vertex] (5) at (2,-0.5) {};
				\draw (1)--(2)--(3)--(1) (3)--(4)--(2) (2)--(5)--(4);
			\end{scope}
			
			\begin{scope}[shift={(9.5,0)}]
				\node[vertex] (1) at (0,-0.5) {};
				\node[vertex] (2) at (1,-0.5) {};
				\node[vertex] (3) at (0.5,0.5) {};
				\node[vertex] (4) at (1.5,0.5) {};
				\node[vertex] (5) at (2,-0.5) {};
				\node[vertex] (6) at (2.5,0.5) {};
				\draw (1)--(2)--(3)--(1) (3)--(4)--(2) (2)--(5)--(4) (4)--(6)--(5);
			\end{scope}
			
			\begin{scope}[shift={(13,0)}]
				\node[vertex] (1) at (0,-1) {};
				\node[vertex] (2) at (1,-1) {};
				\node[vertex] (3) at (2,-1) {};
				\node[vertex] (4) at (0.5,0) {};
				\node[vertex] (5) at (1.5,0) {};
				\node[vertex] (6) at (1,1) {};
				\draw (1)--(2)--(4)--(1) (2)--(3)--(5)--(2) (4)--(5)--(6)--(4);
			\end{scope}
			
		\end{tikzpicture}
	\end{center}
	\caption{The seven possible centers of a maximal outerplanar graph.}
	\label{fig:mop_centers}
\end{figure}
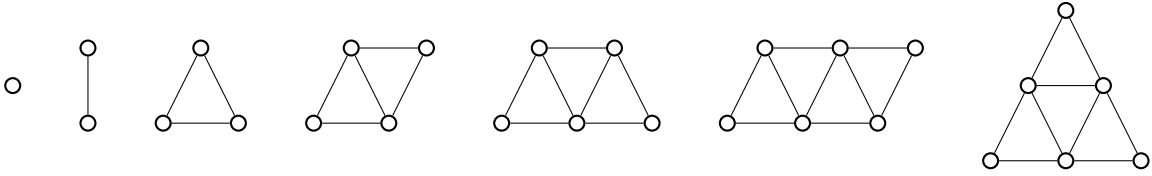

A graph is \textbf{chordal} if the only induced cycles are 3-cycles. 
Laskar and Shier showed in \cite{laskar:centerchordal} that the center of a connected chordal graph is itself a connected chordal graph. 
The center of a planar graph, or even a maximal planar graph, is not necessarily connected as Figure \ref{fig:disccenter} from \cite{casablanca_centersmpg} illustrates.

\begin{figure}[h]
	\begin{center}
		\begin{tikzpicture}
			[scale=0.4,inner sep=0.7mm, 
			vertex/.style={circle,thick,draw}, 
			thickedge/.style={line width=2pt}] 
			\node[vertex] (a1) at (0,0) [fill=white] {};
			\node[vertex] (a2) at (2,0) [fill=white] {};
			\node[vertex] (a3) at (4,0) [fill=white] {};
			\node[vertex] (a4) at (6,0) [fill=white] {};
			\node[vertex] (a5) at (8,0) [fill=white] {};
			\draw (a1)--(a2)--(a3)--(a4)--(a5);
			\node[vertex] (c1) at (4,2) [fill=black] {};
			\node[vertex] (d1) at (4,-2) [fill=black] {};
			\draw (a2)--(c1)--(a4)--(d1)--(a2) (c1)--(a3)--(d1);
			\node[vertex] (c2) at (3,3.7) [fill=black] {};
			\node[vertex] (c3) at (5,3.7) [fill=black] {};
			\node[vertex] (d2) at (3,-3.7) [fill=black] {};
			\node[vertex] (d3) at (5,-3.7) [fill=black] {};
			\draw (c1)--(c2)--(c3)--(c1) (d1)--(d2)--(d3)--(d1) (a2)--(d2)--(a1)--(c2)--(a2) (a4)--(d3)--(a5)--(c3)--(a4);
			\node[vertex] (u) at (-5,0) [fill=white] {};
			\draw (u)--(a1) (c2)--(u)--(d2) (u) .. controls (-1,3) and (4.7,5) .. (c3);
			\draw (u) .. controls (-1,-3) and (4.7,-5) .. (d3) (u) .. controls (-1,5) and (7.5,6.7) .. (a5);
		\end{tikzpicture}
	\end{center}
	\caption{A maximal planar graph with center $2K_3$. The central vertices are black.}
	\label{fig:disccenter}
\end{figure}
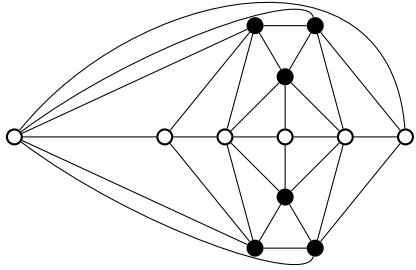

A similar problem to that of finding centers of graphs is describing the collection of eccentricities that a graph has. 
When written as an ordered sequence of positive integers, this collection is called the \textbf{eccentric sequence} of the graph.
In \cite{dankelmann:eccseqmop}, Dankelmann, Erwin, Goddard, Mukwembi and Swart characterise eccentric sequences of maximal outerplanar graphs. 

\section{Quasi-eccentricity}
Consider a (not necessarily planar) graph $G$. 
Given a vertex $v$ in $G$, we say that $u$ is an \textbf{eccentric vertex} of $v$ if $d(u,v) = e(v)$. 
Denote the set of vertices eccentric to $v$ by $\ecc(v)$. 
Given a subset $S$ of $V(G)$, we can similarly define $\ecc(S)$ as the set of vertices at maximum distance from $S$.
The eccentricity of the set $S$, $e(S)$, can be realised as the distance $d(S, \ecc(S))$.

We now introduce a similar concept.
Given a vertex $u$ and a subset $S$ of vertices of $G$, we say that $u$ is a \textbf{quasi-eccentric vertex} of $S$ in $G$ if, for any vertex $v$ of $G$, there exists a vertex $s$ in $S$ such that $d(u,s) \geq d(v,s)$. 
We denote the set of quasi-eccentric vertices of $S$ by:
\[
\qcc_G(S) = \{u \in V(G) : (\forall v \in V(G))(\exists s\in S)\text{ such that }d(u,s) \geq d(v,s) \}.
\] 
If the graph in question is clear, we omit the subscript $G$.
If $H$ is a subgraph of $G$, we use the notation $\qcc(H)$ to refer to the set $\qcc(V(H))$.
Define the \textbf{quasi-eccentricity} $q(S)$ of $S$ as $q(S) = d(S, \qcc(S))$.
Observe that quasi-eccentricity generalises eccentricity:

\begin{obs}
	Let $H$ be a graph and $S$ a set of vertices of $H$. If the vertex $u$ is an eccentric vertex of $S$, it is also a quasi-eccentric vertex of $S$.
	\label{obs_ecc_is_qcc}
\end{obs}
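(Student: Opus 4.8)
The plan is to unwind both definitions and reduce the claim to a single inequality chain. Writing $d(u,S) = \min_{s \in S} d(u,s)$ for the set-distance, the hypothesis that $u$ is an eccentric vertex of $S$ means precisely that $d(u,S) = e(S)$, where $e(S) = \max_{w \in V(H)} d(w,S)$ is the global maximum of the set-distance taken over all vertices of $H$. The goal is to verify that $u \in \qcc(S)$; that is, that for every vertex $v \in V(H)$ there exists some $s \in S$ with $d(u,s) \ge d(v,s)$.

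First I would record the two elementary consequences of these definitions. Since $d(u,S)$ is the minimum of $d(u,s)$ over $s \in S$, every individual distance satisfies $d(u,s) \ge d(u,S) = e(S)$. On the other side, for an arbitrary vertex $v$ the quantity $d(v,S)$ is by definition bounded above by the maximum value $e(S)$, so $d(v,S) \le e(S)$.

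The key step---and really the only place where a choice is made---is to select, for each $v$, a witness $s^\ast \in S$ realising the distance from $v$ to $S$, so that $d(v,s^\ast) = d(v,S)$. Chaining the two inequalities above through $e(S)$ then yields $d(u,s^\ast) \ge e(S) \ge d(v,S) = d(v,s^\ast)$, which is exactly the condition required for $u$ to be quasi-eccentric to $S$. I expect no genuine obstacle here, as the statement is a direct definition-chase; the only subtlety worth flagging is that the witness $s^\ast$ must be permitted to depend on $v$ (it is simply a vertex of $S$ nearest to $v$), rather than being required to be a single element of $S$ that works simultaneously for every $v$.
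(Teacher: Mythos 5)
Your proof is correct: the paper states this as an observation with no proof at all, and your argument is exactly the definition chase it implicitly relies on. Choosing the witness $s^\ast \in S$ nearest to $v$ and chaining $d(u,s^\ast) \ge d(u,S) = e(S) \ge d(v,S) = d(v,s^\ast)$ is precisely what is needed, and you correctly note that $s^\ast$ may depend on $v$.
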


We illustrate the concept with an example.
Consider the path $G : v_1, \dots, v_5$ shown in Figure \ref{fig_qcc_not_ecc}.
Observe that $\ecc(S) = \{v_5\}$, while $\qcc(S) = \{v_1, v_5\}$. 
Thus $\ecc(S)$ is properly contained in $\qcc(S)$. 
Also, the eccentricity $e(S) = 2$, while the quasi-eccentricity $q(S) = 1$.
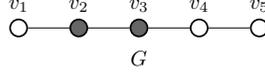
\begin{figure}[h]
	\centering
	\scalebox{0.8}{
	\begin{tikzpicture}
	[inner sep=1mm, 
	vertex/.style={circle,thick,draw},
	dvertex/.style={rectangle,thick,draw, inner sep=1.3mm}, 
	thickedge/.style={line width=1.5pt}]
	
\begin{scope}[shift={(0,0)}]
	\node[vertex] (h1) at (0,0) [label=90:{$v_1$}] {}; 
	\node[vertex, fill=black!60] (h2) at (1,0) [label=90:{$v_2$}] {}; 
	\node[vertex, fill=black!60] (h3) at (2,0) [label=90:{$v_3$}] {}; 
	\node[vertex] (h4) at (3,0) [label=90:{$v_4$}] {}; 
	\node[vertex] (h5) at (4,0) [label=90:{$v_5$}] {}; 

	\draw (h1)--(h2)--(h3)--(h4)--(h5);
	\node at (2,-0.5) {$G$}; 
\end{scope}
	
%
%
%
%
	
	\end{tikzpicture}
	}
	\caption{The path graph $G : v_1, v_2, v_3, v_4, v_5$. 
		The vertices of the set $S = \{v_2, v_3\}$ are coloured grey.}
	\label{fig_qcc_not_ecc}
\end{figure}

\section{The quasi-eccentric face criterion}
The question of whether a planar graph $H$ can be embedded into the center of some planar (or maximal planar) graph $G$ has a natural generalisation. 
We ask whether it is possible to embed $H$ into $G$ such that every vertex of $H$ has the same eccentricity in $G$, and give a necessary condition for this.

\begin{thm}[The quasi-eccentric face criterion]
	Let $H$ be a plane graph of diameter $d$, and let $\alpha \geq d$ be an integer. 
	If there exists a plane graph $G$ such that $H$ is an isometric subgraph of $G$, and for which $V(H)\subseteq \fe_G(\alpha)$, then for all vertices $u$ in $H$ that satisfy $e_H(u)< \alpha$, there exists a face $f$ of $H$ such that $u\in \qcc_H(H[f])$.
	\label{thm_qef_criterion}
\end{thm}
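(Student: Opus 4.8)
The plan is to locate, for each such $u$, a single face $f$ of $H$ that witnesses the quasi-eccentricity, using an eccentric vertex of $u$ in $G$ as a ``target''. First I would pick a vertex $w$ of $G$ realising the eccentricity of $u$, so that $d_G(u,w) = e_G(u) = \alpha$; such a $w$ exists because $V(H)\subseteq \fe_G(\alpha)$. Since $e_H(u) < \alpha$ and $H$ is isometric, every vertex $x$ of $H$ satisfies $d_G(u,x) = d_H(u,x) \leq e_H(u) < \alpha$, so $w\notin V(H)$. Reading the embedding of $H$ as the one induced by $G$, the vertex $w$ then lies in the interior of some face $f$ of $H$, and this $f$ is the face I would claim works.

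The key structural step is a distance decomposition through the boundary of $f$. Writing $\phi(s) = d_G(s,w)$ for $s\in V(H[f])$, I would establish that for every vertex $x$ of $H$,
\[
d_G(x,w) \;=\; \min_{s\in V(H[f])} \bigl( d_H(x,s) + \phi(s) \bigr).
\]
The inequality ``$\leq$'' is just the triangle inequality together with isometry, which lets me replace $d_G(x,s)$ by $d_H(x,s)$. The reverse inequality is where planarity enters, and is the step I expect to need the most care: a geodesic from $x$ to $w$ in $G$ is a curve ending in the interior of $f$ but starting at a vertex of $H$, which is not interior to $f$; since no edge of $G$ crosses an edge of $H\subseteq G$, the geodesic can only leave the open region $f$ through a vertex $s^\ast$ on the boundary of $f$. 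As every sub-path of a geodesic is a geodesic, this gives $d_G(x,w) = d_G(x,s^\ast) + \phi(s^\ast)$, exhibiting a boundary vertex that attains the minimum.

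With this identity the conclusion is a short comparison that exploits the hypothesis that all vertices of $H$ share eccentricity $\alpha$. Applying the decomposition to $x=u$ gives $\alpha = \min_{s}\bigl( d_H(u,s)+\phi(s)\bigr)$. Now fix an arbitrary $v\in V(H)$ and let $s_v\in V(H[f])$ attain the minimum for $x=v$, so that $d_H(v,s_v) + \phi(s_v) = d_G(v,w) \leq e_G(v) = \alpha$. Since $d_H(u,s_v)+\phi(s_v) \geq \alpha$ by the identity for $u$, subtracting $\phi(s_v)$ yields $d_H(u,s_v) \geq d_H(v,s_v)$. Thus for every $v$ the boundary vertex $s_v$ witnesses the defining inequality, so $u\in\qcc_H(H[f])$, as required. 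I would emphasise that the only genuinely topological input is the boundary-crossing argument in the decomposition; everything else is the triangle inequality, isometry, and the common eccentricity $\alpha$.
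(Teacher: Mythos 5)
Your proof is correct and follows essentially the same route as the paper's: both locate an eccentric vertex of $u$ in $G$ inside a face $f$ of $H$, and both rely on the key facts that any geodesic from a vertex of $H$ to that vertex must pass through a vertex of $H[f]$ and that isometry plus the common eccentricity $\alpha$ then forces the required witness inequality. The only difference is presentational: you argue directly through the min-decomposition identity, whereas the paper argues by contradiction, assuming some $v$ is strictly farther than $u$ from every vertex of $H[f]$ and deriving $e_G(v) > \alpha$.
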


\begin{proof}
	Assume that there exists a vertex $u$ in $H$ with $e_H(u)<\alpha$ that is not quasi-eccentric to any face of $H$, and assume to the contrary that $H$ is an isometric subgraph of some plane graph $G$ such that $V(H)\subseteq \fe_G(\alpha)$. 
	Since the eccentricity of $u$ is less than $\alpha$ in $H$, but is exactly $\alpha$ in $G$, there is some vertex $s$ in $G-H$ with $d(u,s) = \alpha$. 
	This vertex $s$ lies in some face $f$ of $H$. 
	
	By the assumption that $u$ is not quasi-eccentric to $H[f]$ in $H$, there exists a vertex $v$ of $H$ such that $d(v,x) > d(u,x)$ for all vertices $x$ in $H[f]$. 
	Let $P: v=x_0, x_1, \dots, x_i = w, \dots, x_j = s$ be a $v-s$ geodesic in $G$, where $w$ is the last vertex of $P$ which belongs to $H[f]$. 
	Such a vertex $w$ must exist: the path $P$ starts outside of $f$ and ends in $f$. 
	Further, $v$ cannot lie in $f$ since $v$ is a vertex of $H$, and $v$ cannot lie on the boundary of $f$ because $d(v,x) > d(u,x) \geq 0$ for all vertices $x$ in $H[f]$. 
	
	Let $Q$ be a $u-w$ geodesic in $G$, and observe that $Q$ is shorter than $P[v,w]$. 
	Thus the $u-s$ path $Q\cup P[w,s]$ is strictly shorter than the $v-s$ geodesic $P$, and so the eccentricity of $v$ in $G$ is strictly greater than the eccentricity of $u$ in $G$. 
	This contradicts the assumption that $u$ and $v$ are in $\fe_G(\alpha)$, completing the proof.
\end{proof}

The next corollary follows immediately from Theorem \ref{thm_qef_criterion} and Lemma \ref{lem:mpgiso}.

\begin{cor}
	Let $H$ be a maximal plane graph of diameter $d$, and let $\alpha \geq d$ be an integer. 
	If there exists a plane graph $G$ containing $H$ as a subgraph, and for which $V(H)\subseteq \fe_G(\alpha)$, then for all vertices $u$ in $H$ that satisfy $e_H(u)< \alpha$, there exists a face $f$ of $H$ such that $u\in \qcc_H(H[f])$.
	\label{cor_qef_mpg_nec}
\end{cor}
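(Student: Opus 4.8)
The plan is to recognize this corollary as a direct specialization of Theorem~\ref{thm_qef_criterion}. Its conclusion is verbatim the conclusion of that theorem, and its hypotheses differ in only one respect: here $H$ is assumed to be maximal plane (rather than merely plane), while $G$ is required only to \emph{contain} $H$ as a subgraph (rather than to contain it as an \emph{isometric} subgraph). So the whole task reduces to upgrading the weaker containment hypothesis to the isometric containment that the theorem demands; once that is done, the theorem fires directly.

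First I would observe that the plane/planar and maximal-plane/maximal-planar distinctions are only about working with a fixed embedding versus the underlying abstract graph, and isometry is a purely metric property insensitive to the embedding. Thus $G$, being a plane graph, is planar, and $H$, being a maximal plane graph, is maximal planar; hence $H$ is a maximal planar subgraph of the planar graph $G$. Applying Lemma~\ref{lem:mpgiso} to this pair yields that $H$ is isometric in $G$, which is precisely the one hypothesis of Theorem~\ref{thm_qef_criterion} not already assumed. With $H$ isometric in $G$, $V(H) \subseteq \fe_G(\alpha)$, and $\alpha \geq d$, every hypothesis of the theorem is met, and so for each $u \in V(H)$ with $e_H(u) < \alpha$ there is a face $f$ of $H$ with $u \in \qcc_H(H[f])$, as required.

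I do not expect any genuine obstacle: all the mathematical content sits in Theorem~\ref{thm_qef_criterion} (whose proof already carries the geodesic-rerouting argument) and in Lemma~\ref{lem:mpgiso} (which supplies isometry). The only point worth stating carefully is that Lemma~\ref{lem:mpgiso} is legitimately applicable here, i.e.\ that the maximal planarity of $H$ is exactly the structural hypothesis that lets us dispense with assuming isometry by hand. The value of the corollary is thus practical rather than deep: it records that, for maximal planar graphs, the isometry condition in the quasi-eccentric face criterion comes for free.
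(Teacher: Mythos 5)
Your proposal is correct and matches the paper's own reasoning exactly: the paper states that the corollary ``follows immediately from Theorem~\ref{thm_qef_criterion} and Lemma~\ref{lem:mpgiso}'', which is precisely your argument of using Lemma~\ref{lem:mpgiso} to upgrade the subgraph hypothesis to isometric containment and then applying the theorem. You have simply spelled out the details the paper leaves implicit.
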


\section{Other necessary conditions}
In this section, we explore another (much simpler to use) necessary condition for a plane graph to be an isometric, equi-eccentric subgraph of some plane graph.
We show that this necessary condition is implied by the condition of Theorem \ref{thm_qef_criterion}, but that the converse does not hold.
We first need two simple and well known lemmas, whose proofs we include for completeness.

\begin{lem}
	Let $G=(V,E)$ be a connected graph, and $S$ a separator of $G$. 
	If vertices $u$ and $v$ in $G$ are in different components of $G-S$, then $d(u,v)\geq d(u,S) + d(v,S)$.
	\label{lem_sep_different_components}
\end{lem}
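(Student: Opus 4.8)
The plan is to analyse a shortest $u$–$v$ path and exploit the fact that, because $u$ and $v$ lie in different components of $G-S$, every such path is forced to pass through the separator $S$. The distance $d(u,v)$ can then be bounded below by cutting the path at its first and last visit to $S$.

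First I would fix a $u$–$v$ geodesic $P: u = w_0, w_1, \dots, w_k = v$, so that $\ell(P) = d(u,v)$. The crucial structural observation is that $P$ must contain at least one vertex of $S$: if it did not, then $P$ would be a path lying entirely in $G-S$ joining $u$ to $v$, placing them in the same component of $G-S$ and contradicting the hypothesis. This single step is where the assumption on $u$ and $v$ is actually used; everything afterwards is length bookkeeping. Next I would let $i$ be the smallest index with $w_i \in S$ and $j$ the largest index with $w_j \in S$, noting $i \le j$, and decompose $P$ into the three consecutive segments $P[u, w_i]$, $P[w_i, w_j]$, and $P[w_j, v]$.

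Since $w_i \in S$, the initial segment is a path from $u$ to a vertex of $S$, so by the definition of $d(u,S)$ as a minimum over the vertices of $S$ its length satisfies $\ell(P[u,w_i]) \ge d(u,S)$; symmetrically $\ell(P[w_j,v]) \ge d(v,S)$. Because the middle segment has nonnegative length, summing the three pieces yields
\[
d(u,v) = \ell(P) \ge \ell(P[u, w_i]) + \ell(P[w_j, v]) \ge d(u, S) + d(v, S),
\]
which is exactly the claimed inequality. I expect no genuine obstacle here: the argument is entirely elementary, and the only point that merits a sentence of care is the verification that the geodesic really does meet $S$, since that is the sole place the separation hypothesis enters.
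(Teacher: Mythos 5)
Your proof is correct and follows essentially the same approach as the paper: both arguments fix a $u$--$v$ geodesic, observe that it must meet $S$ because $S$ separates $u$ from $v$, and then bound the initial and final segments below by $d(u,S)$ and $d(v,S)$ respectively. The only cosmetic difference is that you cut at the first and last visits to $S$ (discarding a nonnegative middle segment) whereas the paper splits at a single vertex of $S\cap P$; the content is identical.
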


\begin{proof}
	Let $P$ be a $u-v$ geodesic. 
	Because $G$ is connected and $S$ separates the vertices $u$ and $v$, there exists a vertex $s$ in $S\cap P$. 
	The geodesic $P$ can be split into two paths, $P[u,s]$ and $P[s,v]$, which have no edges in common.
	Since $P[u,s]$ is a $u-S$ path and $P[s,v]$ is an $S-v$ path, it follows that $\ell(P[u,s]) \geq d(u,S)$ and $\ell(P[s,v]) \geq d(S,v)$, and thus we obtain the following chain of inequalities:
	\[
	d(u,v) = \ell(P) = \ell(P[u,s]) + \ell(P[s,v]) \geq d(u,S) + d(v,S)
	\]
\end{proof}

\begin{lem}
	Let $G$ be a connected graph, and $S$ a connected subgraph of $G$. 
	If $u$ is a vertex of $S$, and $v$ is a vertex of $G-S$, then $d(u,v)\leq \text{diam}(S) + d(S,v)$.
	\label{lem_subgraph_diam}
\end{lem}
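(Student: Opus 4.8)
The plan is to combine the triangle inequality in $G$ with the elementary fact that distances measured inside a subgraph can only be larger than distances measured in the ambient graph. First I would choose a vertex $s$ of $S$ that realises the set-distance from $S$ to $v$, so that $d(s,v) = d(S,v)$; such an $s$ exists because $S$ has only finitely many vertices and $d(S,v)$ is by definition a minimum over $V(S)$. Applying the triangle inequality in $G$ then gives $d(u,v) \le d(u,s) + d(s,v) = d(u,s) + d(S,v)$, so the whole problem reduces to bounding $d(u,s)$ by $\text{diam}(S)$.

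The crucial observation is that $u$ and $s$ are \emph{both} vertices of $S$, and since $S$ is connected the internal distance $d_S(u,s)$ is well-defined and finite. Because any $u$–$s$ path lying entirely in $S$ is in particular a $u$–$s$ path in $G$, we have $d_G(u,s) \le d_S(u,s)$, and by the definition of diameter $d_S(u,s) \le \text{diam}(S)$. Substituting $d(u,s) \le \text{diam}(S)$ into the inequality from the previous paragraph yields $d(u,v) \le \text{diam}(S) + d(S,v)$, which is exactly the desired bound.

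I do not expect any genuine obstacle in this argument, as it is a short chaining of two inequalities. The one point that warrants care is bookkeeping about which graph each distance is computed in: $\text{diam}(S)$ denotes the diameter taken inside $S$, and this is precisely why the hypothesis that $S$ is connected is used (to guarantee $\text{diam}(S)$ is finite), while the inequality $d_G(u,s) \le d_S(u,s)$ between the ambient and internal distances is the step that lets the internal diameter control the ambient distance.
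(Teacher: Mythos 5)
Your proposal is correct and takes essentially the same approach as the paper: both select a vertex $s$ of $S$ realising $d(s,v) = d(S,v)$, bound the $u$--$s$ distance by $\text{diam}(S)$ via a geodesic lying inside $S$, and combine the two bounds (the paper by concatenating the two geodesics into a walk, you by invoking the triangle inequality, which is the same argument phrased abstractly). Your explicit remark that $d_G(u,s) \leq d_S(u,s) \leq \text{diam}(S)$ makes the bookkeeping slightly more careful than the paper's, but the underlying proof is identical.
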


\begin{proof}
	Let $w$ be a vertex of $S$ such that $d(w,v) = d(S,v)$, and let $P$ be a $v-w$ geodesic. 
	Let $Q$ be a $w-u$ geodesic in $S$. 
	Since the length of $P$ is $d(S,v)$ and the length of $Q$ is at most $\text{diam}(S)$, the walk $P\cup Q$ is a $v-u$ walk of length of at most $\text{diam}(S) + d(S,v)$. 
\end{proof}

A cycle in a plane graph is a \textbf{Jordan separating cycle} if there are vertices in both its interior and exterior.
Not all separating cycles are necessarily Jordan separating, but all Jordan separating cycles are separators (See Figure \ref{fig_sepcycles}).

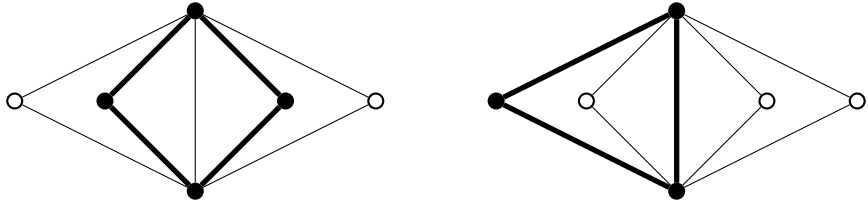
\begin{figure}[h]
	\begin{center}
		\begin{tikzpicture}
			[scale=0.8,inner sep=0.7mm, 
			vertex/.style={circle,thick,draw}, 
			thickedge/.style={line width=2pt}] 
			
			\node[vertex] (1) at (-3,0) {};
			\node[vertex, fill=black] (2) at (-1.5,0) {};
			\node[vertex, fill=black] (3) at (1.5,0) {};
			\node[vertex] (4) at (3,0) {};
			\node[vertex, fill=black] (5) at (0,1.5) {};
			\node[vertex, fill=black] (6) at (0,-1.5) {};
			
			\draw[thickedge] (5)--(3)--(6)--(2)--(5);
			\draw (5)--(4)--(6)--(1)--(5) (5)--(6);
			
			\begin{scope}[shift={(8,0)}]
				\node[vertex, fill=black] (1) at (-3,0) {};
				\node[vertex] (2) at (-1.5,0) {};
				\node[vertex] (3) at (1.5,0) {};
				\node[vertex] (4) at (3,0) {};
				\node[vertex, fill=black] (5) at (0,1.5) {};
				\node[vertex, fill=black] (6) at (0,-1.5) {}; 
				
				\draw[thickedge] (5)--(1)--(6)--(5);
				\draw (5)--(4)--(6)--(1)--(5) (5)--(6) (5)--(3)--(6)--(2)--(5);
			\end{scope}
			
		\end{tikzpicture}
		\caption{The bold cycle on the left is a separating cycle which is not a Jordan separating cycle. The bold cycle on the right is a Jordan separating cycle.}
	\end{center}
	\label{fig_sepcycles}
\end{figure}

The following necessary condition for equi-eccentric embedding, Lemma \ref{lem_cycle_cond}, shows that if a plane graph $G$ embeds isometrically into the center of some other plane graph $H$, then $G$ does not contain a short Jordan separating cycle that separates two vertices far from the cycle.

\begin{lem}
	Let $H$ be a plane graph of diameter $d$, and let $\alpha \geq d$ be an integer. 
	If there exists a plane graph $G$ such that $H$ is an isometric subgraph of $G$ and $V(H)\subseteq \fe_G(\alpha)$, then for all cycles $C$ of $H$, and all vertices $a$ and $b$ of $H$ which $C$ Jordan separates, either $d(a,C) \leq \text{diam}(C)$ or $d(b,C) \leq \text{diam}(C)$.
	\label{lem_cycle_cond}
\end{lem}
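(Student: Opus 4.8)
The plan is to argue by contradiction, assuming that some cycle $C$ of $H$ Jordan separates vertices $a$ and $b$ with both $d(a,C) > \text{diam}(C)$ and $d(b,C) > \text{diam}(C)$; write $m = \text{diam}(C)$. Since $C$ lies in $H$ and $H$ is embedded inside $G$, the cycle $C$ is a Jordan separating cycle of $G$ as well, with say $a$ in its interior and $b$ in its exterior, so $V(C)$ is a separator of $G$ with $a$ and $b$ lying in different components of $G-V(C)$. Because $H$ is isometric in $G$ and $a,b,V(C)\subseteq V(H)$, the hypotheses transfer to $G$: $d_G(a,C)=d_H(a,C)>m$ and likewise $d_G(b,C)>m$. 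I also record that $m<\alpha$, since $m < d(a,C) \le e_H(a) \le d \le \alpha$.

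The key step is to show that \emph{every} vertex of $G$ lies close to $C$, namely $d(w,C) < \alpha - m$ for all $w\in V(G)$. I would split into cases according to the region of $w$ in the embedding of $G$. If $w$ is interior to $C$, then $C$ separates $b$ from $w$, so Lemma~\ref{lem_sep_different_components} gives $d(b,w)\ge d(b,C)+d(w,C) > m + d(w,C)$; combined with $d(b,w)\le e_G(b)=\alpha$ this yields $d(w,C)<\alpha-m$. If $w$ is exterior to $C$, the symmetric argument using $a$ and $d(a,w)\le \alpha$ gives the same bound. If $w\in V(C)$ the bound is immediate, since $\alpha>m$.

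Finally, I would exploit the fact — easy to overlook — that the vertices of $C$ themselves lie in $H$ and hence also have eccentricity exactly $\alpha$ in $G$. Fix any $c\in V(C)$. For an arbitrary $w\in V(G)\setminus V(C)$, Lemma~\ref{lem_subgraph_diam} applied to the connected subgraph $C$ gives
\[
d(c,w)\le \text{diam}(C)+d(C,w)=m+d(w,C)<m+(\alpha-m)=\alpha,
\]
using the key step, while $d(c,w)\le m<\alpha$ trivially when $w\in V(C)$. Hence $e_G(c)<\alpha$, contradicting $c\in V(H)\subseteq \fe_G(\alpha)$, and the proof is complete.

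I expect the main obstacle to be recognizing \emph{where} the contradiction lives. The two far vertices $a$ and $b$ are, by themselves, perfectly consistent with every distance inequality one can write down, so a direct attempt to compare $e_G(a)$ with $e_G(b)$ stalls. The decisive observation is that the smallness of $C$ forces every vertex of $G$ into a thin collar around $C$, which in turn drives the eccentricity of any vertex \emph{on} $C$ strictly below $\alpha$; the contradiction therefore comes from a vertex of the cycle rather than from $a$ or $b$.
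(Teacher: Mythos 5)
Your proof is correct and takes essentially the same approach as the paper: both arguments combine Lemma \ref{lem_sep_different_components} and Lemma \ref{lem_subgraph_diam} to show that a vertex of the cycle $C$ would have eccentricity strictly less than $\alpha$ in $G$, contradicting $V(H)\subseteq \mathcal{E}_G(\alpha)$. The only packaging difference is that you first prove the uniform collar bound $d(w,C) < \alpha - \text{diam}(C)$ for every $w \in V(G)$ (using both $a$ and $b$), whereas the paper argues directly with an eccentric vertex of a single vertex $u \in V(C)$ and compares $e(u)$ with $e(b)$.
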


\begin{proof}
	Assume to the contrary that there exists an embedding of $H$ into $G$ as described in the hypothesis of the Lemma, but also that there exist, in $H$, vertices $a$ and $b$, and a cycle $C$, such that both $d(a,C)> \text{diam}(C)$ and $d(b,C)> \text{diam}(C)$. 
	We may assume without loss of generality that $a$ lies in the interior of $C$ and $b$ lies in the exterior of $C$. 
	Let $u$ be a vertex of $C$, and let $v$ be an eccentric vertex of $u$ in $G$. 
	Since $d(a,C)> \text{diam}(C)$, the vertex $v$ cannot be a vertex of $C$. 
	Thus $v$ must lie in either the interior or exterior of $C$. 
	Assume without loss of generality that $v$ lies in the interior of $C$. 
	Since $C$ Jordan-separates the vertices $v$ and $b$, Lemmas \ref{lem_sep_different_components} and \ref{lem_subgraph_diam} imply the following chain of inequalities:
	\[
	e(u) = d(u,v) \leq \text{diam}(C) + d(C,v) < d(b,C) + d(C,v) \leq d(b,v) \leq e(b).
	\]
	
	Thus the eccentricity of $u$ in $G$ is less than the eccentricity of $b$ in $G$, a contradiction.
\end{proof}

Lemma \ref{lem_cycle_cond_mpg}, which appears in \cite{casablanca_centersmpg}, follows immediately from Lemmas \ref{lem_cycle_cond} and \ref{lem:mpgiso}.

\begin{lem}\textup{\cite{casablanca_centersmpg}}
	Suppose that $H$ is a maximal plane graph, $C$ is a Jordan separating cycle of $H$, and that $a$ and $b$ are vertices in the interior and exterior, respectively, of $C$. 
	If $d(a,C) > \text{diam}(C)$ and $d(b,C) > \text{diam}(C)$, then $H$ is not the centre of any planar graph $G$.
	\label{lem_cycle_cond_mpg}
\end{lem}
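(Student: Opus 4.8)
The plan is to argue by contradiction, assuming that $H$ is the center of some planar graph $G$, and then to show that all the hypotheses of Lemma \ref{lem_cycle_cond} are met, so that its conclusion clashes with our assumptions on $a$, $b$, and $C$. Since $H$ is the center of $G$, every vertex of $H$ realises the minimum eccentricity in $G$; writing $\alpha = \text{rad}(G)$ for this minimum, the statement ``$H$ is the center of $G$'' is exactly the statement $V(H) \subseteq \fe_G(\alpha)$.

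Next I would assemble the two remaining ingredients that Lemma \ref{lem_cycle_cond} requires. Because $H$ is maximal planar and sits inside the planar graph $G$, Lemma \ref{lem:mpgiso} immediately gives that $H$ is an isometric subgraph of $G$. It then only remains to verify the numerical hypothesis $\alpha \geq d$, where $d = \text{diam}(H)$. This is a one-line consequence of isometry: for any $u, v \in V(H)$ we have $d_H(u,v) = d_G(u,v) \leq e_G(u) = \alpha$, and maximising over all such pairs yields $d \leq \alpha$.

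With the hypotheses of Lemma \ref{lem_cycle_cond} now verified, I would apply its conclusion to the cycle $C$ together with the Jordan-separated pair $a$ (interior) and $b$ (exterior). That conclusion forces $d(a,C) \leq \text{diam}(C)$ or $d(b,C) \leq \text{diam}(C)$, in direct contradiction with the standing assumption that both distances strictly exceed $\text{diam}(C)$. The contradiction establishes that no such $G$ exists, i.e.\ $H$ is not the center of any planar graph.

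Since the statement is really just the specialisation of Lemma \ref{lem_cycle_cond} to the case where the maximal plane graph $H$ is itself the center (rather than merely an isometric, equi-eccentric subgraph), I do not expect a genuine obstacle here. The only step needing any care is the bookkeeping in the second paragraph: one must notice that ``$H$ is the center'' simultaneously supplies the membership $V(H) \subseteq \fe_G(\alpha)$ and, through the isometry guaranteed by Lemma \ref{lem:mpgiso}, the inequality $\alpha \geq d$ that licenses the earlier lemma.
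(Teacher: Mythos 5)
Your proof is correct and takes essentially the same route as the paper: the paper derives this lemma as an immediate consequence of Lemmas \ref{lem_cycle_cond} and \ref{lem:mpgiso}, which is precisely your argument. The only detail you spell out that the paper leaves implicit is the verification that $\alpha \geq \text{diam}(H)$ via isometry, which is exactly the right piece of bookkeeping.
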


We now show that if some plane graph $H$ satisfies the quasi-eccentric face criterion of Theorem \ref{thm_qef_criterion}, it also satisfies the condition of Lemma \ref{lem_cycle_cond}.

\begin{lem}
	Let $H$ be a plane graph of diameter $d$. 
	If for all vertices $u$ of $H$, 
	there exists a face $f$ such that $u$ is in $\qcc(H[f])$, then for all cycles $C$ of $H$, and all vertices $a$ and $b$ of $H$ which $C$ Jordan separates, either $d(a,C) \leq \text{diam}(C)$ or $d(b,C) \leq \text{diam}(C)$.
	\label{lem_qef_implies_mcc}
\end{lem}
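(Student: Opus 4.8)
The plan is to argue by contradiction in the style of the earlier lemmas: assuming the cycle condition fails, I produce a single vertex of $H$ that is quasi-eccentric to \emph{no} face, contradicting the hypothesis. So suppose there is a cycle $C$ together with vertices $a$ and $b$ that $C$ Jordan separates, with (after relabelling) $a$ interior and $b$ exterior, and with $d(a,C) > \text{diam}(C)$ and $d(b,C) > \text{diam}(C)$. I claim that any vertex $u$ lying on $C$ fails to be quasi-eccentric to every face of $H$, which already contradicts the assumption that every vertex is in $\qcc(H[f])$ for some face $f$.

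The key structural observation is topological: since $C$ is a cycle of the plane graph $H$, the edges of $C$ trace a Jordan curve, so every face $f$ of $H$ lies entirely in the interior of $C$ or entirely in its exterior, and the vertices of $H[f]$ all lie in the corresponding closed region (the open side together with $C$ itself). I will treat the two kinds of face symmetrically: for a face $f$ interior to $C$ I will show that $b$ \emph{beats} $u$ on all of $H[f]$, i.e.\ $d(b,x) > d(u,x)$ for every vertex $x$ of $H[f]$, and for a face exterior to $C$ the vertex $a$ plays the same role. By the negation of the definition of $\qcc$, either statement gives $u \notin \qcc_H(H[f])$.

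To establish $d(b,x) > d(u,x)$ for $x \in V(H[f])$ with $f$ interior, I would split on whether $x$ lies strictly interior to $C$ or on $C$. If $x$ is strictly interior, then $V(C)$ separates $b$ from $x$, so Lemma \ref{lem_sep_different_components} gives $d(b,x) \geq d(b,C) + d(x,C) > \text{diam}(C) + d(x,C)$, while Lemma \ref{lem_subgraph_diam} applied to the connected subgraph $C$ (with $u \in C$ and $x \in G - C$) gives $d(u,x) \leq \text{diam}(C) + d(C,x)$; comparing the two yields the strict inequality. If $x$ lies on $C$, then $d(b,x) \geq d(b,C) > \text{diam}(C) \geq d(u,x)$, the last bound holding because $u$ and $x$ both lie on $C$. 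The exterior case is verbatim the same argument with $a$ in place of $b$, using $d(a,C) > \text{diam}(C)$.

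Since every face of $H$ is interior or exterior to $C$, this shows $u$ is quasi-eccentric to no face, the desired contradiction. The one point requiring care—and the only real obstacle I anticipate—is the topological bookkeeping: justifying that each face sits entirely on one side of $C$ and that the boundary vertices lying \emph{on} $C$ are handled in their own case, so that the strictness genuinely survives. This is precisely where the strict hypotheses $d(a,C) > \text{diam}(C)$ and $d(b,C) > \text{diam}(C)$ are needed, to upgrade the non-strict bound coming from Lemma \ref{lem_subgraph_diam} into the strict comparison $d(b,x) > d(u,x)$ (respectively $d(a,x) > d(u,x)$).
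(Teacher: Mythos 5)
Your proposal is correct and takes essentially the same approach as the paper: prove the contrapositive, fix a vertex $u$ on $C$, and use Lemmas \ref{lem_sep_different_components} and \ref{lem_subgraph_diam} to show that the separated vertex on the opposite side of $C$ is strictly further than $u$ from every vertex of $H[f]$, for each face $f$ --- your explicit case split for vertices of $H[f]$ lying on $C$ being exactly what the paper absorbs into a single chain of inequalities (where $d(C,w)=0$). The only difference is that the paper's proof additionally shows its witness $u$ is non-peripheral, i.e.\ $e_H(u)<d$; this is not needed for the lemma as stated, but it is what makes the cycle condition follow from the weaker hypothesis of Theorem \ref{thm_qef_criterion}, which only requires vertices with $e_H(u)<\alpha$ to be quasi-eccentric to some face.
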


\begin{proof}
	We prove the contrapositive. 
	Assume that $H$ is a plane graph with a cycle $C$ and vertices $a$ and $b$ such that $C$ Jordan separates $a$ and $b$, and both $d(a,C)> \text{diam}(C)$ and $d(b,C) > \text{diam}(C)$. 
	We show that there exists a non-peripheral vertex which is not quasi-eccentric to any face of $H$.
	
	First, we find a non-peripheral vertex. 
	Let $u$ be a vertex of the cycle $C$, and let $v$ be an eccentric vertex of $u$. 
	As $d(u,b) > \text{diam}(C)$, the vertex $v$ is not contained in the cycle $C$. 
	We can assume without loss of generality that $v$ is in the region of $C$ containing the vertex $b$. 
	Thus, by Lemmas \ref{lem_sep_different_components} and \ref{lem_subgraph_diam}, we obtain the following chain of inequalities:
	\[
	d(a,v) \geq d(a,C) + d(C, v) > \text{diam}(C) + d(C,v) \geq d(u,v)
	\]
	Since $v$ is eccentric to $u$, we conclude that $e(u) < d$.
	
	Now, we show that $u$ is not quasi-eccentric to any face. 
	Observe that every face of $H$ is either in the region of $C$ in which $a$ resides, or the region of $C$ in which $b$ resides. 
	Consider a face $f$ of $H$, and let $f$ be contained in the region of $C$ in which $b$ resides (it is possible that $H[f]\cap C$ is nonempty). 
	If $w$ is a vertex of $H[f]$, then by Lemmas \ref{lem_sep_different_components} and \ref{lem_subgraph_diam}, we deduce that the following inequalities must hold:
	\[
	d(a,w) \geq d(a,C) + d(C,w) > \text{diam}(C) + d(C,w) \geq d(u,w)
	\]
	Consequently, every vertex of $H[f]$ is strictly further from $a$ than it is from $u$, so $u$ is not quasi-eccentric to $H[f]$. 
	Similarly, if $f$ is a face in the region of $C$ in which $a$ lies, then $d(b,w) > d(u,w)$ for every vertex $w$ on the boundary of the face $f$. 
	In any case, the vertex $u$ is not quasi-eccentric to the face $f$.
\end{proof}

We illustrate, in Figure \ref{fig_cycle_not_qef}, a maximal plane graph $G$ which satisfies the condition of Lemma \ref{lem_cycle_cond}, but does not satisfy the quasi-eccentric face criterion of Theorem \ref{thm_qef_criterion}. 
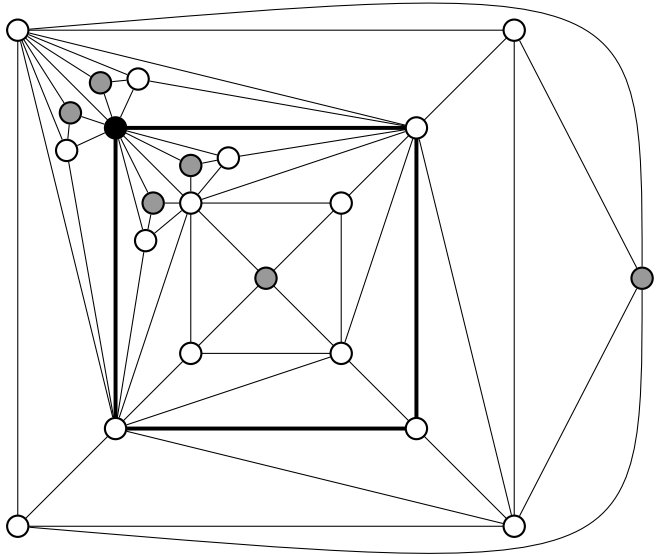
\begin{figure}[h]
	\centering
	\begin{tikzpicture}
	[inner sep=1mm, 
	vertex/.style={circle,thick,draw},
	dvertex/.style={rectangle,thick,draw, inner sep=1.3mm}, 
	thickedge/.style={line width=1.5pt}]
	
	\pgfmathsetmacro{\x}{1}
	
	\draw (1,1)--(1,-1)--(-1,-1)--(-1,1)--(1,1);
	\draw[thickedge] (2,2)--(2,-2)--(-2,-2)--(-2,2)--(2,2);
	\draw (3.3,3.3)--(3.3,-3.3)--(-3.3,-3.3)--(-3.3,3.3)--(3.3,3.3);
	\draw (-3.3,-3.3)--(3.3,3.3) (-3.3,3.3)--(3.3,-3.3);
	
	\node[vertex, fill=black!40] (0;0) at (0,0) {};
	
	\node[vertex, fill=white] (1;1) at (1*\x,1*\x) {};
	\node[vertex, fill=white] (1;-1) at (1*\x,-1*\x) {};
	\node[vertex, fill=white] (-1;-1) at (-1*\x,-1*\x) {};
	\node[vertex, fill=white] (-1;1) at (-1*\x,1*\x) {};
	
	\node[vertex, fill=white] (2;2) at (2*\x,2*\x) {};
	\node[vertex, fill=white] (2;-2) at (2*\x,-2*\x) {};
	\node[vertex, fill=white] (-2;-2) at (-2*\x,-2*\x) {};
	\node[vertex, fill=black] (-2;2) at (-2*\x,2*\x) {};
	
	\node[vertex, fill=white] (3;3) at (3.3*\x,3.3*\x) {};
	\node[vertex, fill=white] (3;-3) at (3.3*\x,-3.3*\x) {};
	\node[vertex, fill=white] (-3;-3) at (-3.3*\x,-3.3*\x) {};
	\node[vertex, fill=white] (-3;3) at (-3.3*\x,3.3*\x) {};
	
	\node[vertex, fill=black!40] (5;0) at (5,0) {};
	
	\draw (3;3)--(5;0)--(3;-3);
	\draw (-3;3) .. controls (5,4) .. (5;0);
	\draw (-3;-3) .. controls (5,-4) .. (5;0);
	
	\draw (2;2)--(-1;1)--(-2;-2)--(1;-1)--(2;2);
	\draw (2;2)--(-3;3)--(-2;-2)--(3;-3)--(2;2);
	
	\node[vertex, fill=black!40] (i1) at (-1.5,1) {};
	\node[vertex, fill=black!40] (i2) at (-1,1.5) {};
	\node[vertex, fill=white] (i1') at (-1.6,0.5) {};
	\node[vertex, fill=white] (i2') at (-0.5,1.6) {};
	
	\node[vertex, fill=black!40] (o1) at (-2.6,2.2) {};
	\node[vertex, fill=black!40] (o2) at (-2.2,2.6) {};
	\node[vertex, fill=white] (o1') at (-2.65,1.7) {};
	\node[vertex, fill=white] (o2') at (-1.7,2.65) {};
	
	\draw (-2;2)--(i1)--(-1;1) (-2;2)--(i1')--(-1;1) (i1)--(i1')--(-2;-2);
	\draw (-2;2)--(i2)--(-1;1) (-2;2)--(i2')--(-1;1) (i2)--(i2')--(2;2);
	
	\draw (-2;2)--(o1)--(-3;3) (-2;2)--(o1')--(-3;3) (o1)--(o1')--(-2;-2);
	\draw (-2;2)--(o2)--(-3;3) (-2;2)--(o2')--(-3;3) (o2)--(o2')--(2;2);
	
	\end{tikzpicture}
	\caption{The graph $G$ shows that the converse of Lemma \ref{lem_qef_implies_mcc} does no hold. Note the symmetry around the bold cycle.}
	\label{fig_cycle_not_qef}
\end{figure}
Observe that if $C$ is a separating 3-cycle in $G$ (of which there are exactly eight), then every vertex in the interior of $C$ is distance 1 from $C$. 
If $C$ is not a 3-cycle, then the criterion of Lemma \ref{lem_cycle_cond} is not broken as $G$ has diameter 4, and so no cycle $C$ of diameter 2 or greater can separate a pair of vertices $a$ and $b$ such that both $d(a,C) > 2$ and $d(b,C) > 2$. 
Thus $G$ satisfies the condition of Lemma \ref{lem_cycle_cond}. 

To see that $G$ does not satisfy the quasi-eccentric face criterion, observe that the black vertex is not quasi-eccentric to any face. 
Given any face in the interior of the bold cycle, there is a grey vertex outside the cycle which is strictly further from every vertex of the face than the black vertex is. 
The situation is similar for faces in the exterior of the bold cycle.

\section{The curious case of maximal planar graphs}
In this section, we will explore the quasi-eccentric face criterion of Theorem \ref{thm_qef_criterion} when the graph $H$, which we are embedding into some supergraph, is a maximal planar graph of order at least 4. 
We show in this case that the criterion is not only necessary, but also sufficient. 
That is to say, we show that for a maximal planar graph $H$ and an integer $\alpha \geq \text{diam}(H)$, we can embed $H$ into some (maximal) planar graph $G$ such that $V(H)\subseteq \fe_G(\alpha)$ if and only if, for all vertices $u$ in $H$, there exists a face $f$ of $H$ such that $u$ is in $\qcc_H(H[f])$.
To discuss faces of $H$, we will need to commit to an embedding of $H$ as a maximal plane graph, but the particular choice of embedding is unimportant.
By a theorem of Whitney \cite{whitney:congconn}, any two embeddings of a 3-connected planar graph have the same faces.
Diestel gives a proof of and context for this theorem in \cite{DiestelIII}.

Let $H$ be a maximal plane graph of order at least 4, and let $f$ be a face of $H$.
We can uniquely identify $f$ by the three vertices on its boundary. 
Thus we use the notation $f:x,y,z$ to indicate that $f$ is the face with vertices $x$, $y$ and $z$ on its boundary. 
Given a vertex $u$ and a face $f:x,y,z$ of $H$, the \textbf{distance vector} of $u$ relative to $H[f]$ is the ordered list: 
\[
\left(d(u,x), d(u,y), d(u,z)\right)
\]
Note that two distinct vertices $u$ and $v$ may have the same distance vector relative to $H[f]$.
If $f:x,y,z$ is a face of $H$, then the \textbf{configuration} of the set $\qcc(H[f])$ is the set of distance vectors of vertices in $\qcc(H[f])$ relative to $H[f]$. 
Symbolically, the configuration of $\qcc(H[f])$ is the set:
\[
\left\{(d(u,x), d(u,y), d(u,z)) : u \in \qcc(H[f]) \right\}.
\]

The proof that the criterion of Theorem \ref{thm_qef_criterion} is sufficient if $H$ is a maximal planar graph, to which this entire section is devoted, will unfold as follows.\\ 
Given a face $f$ of a maximal plane graph $H$, we will show that for any configuration of $\qcc_H(H[f])$, there exists a maximal plane graph $G_f$ with the following properties:
\begin{itemize}
	\item[(1)] $H$ is a subgraph of $G_f$,
	\item[(2)] Every edge and vertex which belongs to $G_f$, but not $H$, lies in $f$,
	\item[(3)] Every vertex of $\qcc_H(H[f])$ has eccentricity $\alpha$ in $G_f$,
	\item[(4)] Every vertex of $H$ has eccentricity at most $\alpha$ in $G_f$.
\end{itemize}

The proof will conclude by constructing the graph $G$ as the union over all faces $f$ of $H$ of the graphs $G_f$.

We begin by finding constraints on the configuration of the set $\qcc_H(H[f])$ for a given face $f$ of a maximal plane graph. 
The constraints we determine here are what allow us to guarantee that, for any possible configuration of $\qcc_H(H[f])$, we can construct the desired graph $G_f$.

\begin{lem}
	Let $H$ be a maximal plane graph and $f$ a face of $H$. 
	If $u$ and $v$ are vertices of $\qcc(H[f])$, then $|d(u, H[f]) - d(v, H[f])| \leq 1$.
	\label{lem_mpg_qef_diffofone}
\end{lem}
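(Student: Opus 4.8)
The plan is to exploit the fact that, because $H$ is a maximal plane graph, the boundary $H[f]$ is a triangle, so writing $S=V(H[f])=\{x,y,z\}$ the three vertices of $S$ are pairwise adjacent and hence pairwise at distance $1$. The single structural fact I would isolate first is a consequence of the triangle inequality: for \emph{any} vertex $w$ of $H$, the three distances $d(w,x),d(w,y),d(w,z)$ differ pairwise by at most $1$ (since $xy,yz,xz\in E(H)$), so their maximum exceeds their minimum by at most $1$. In symbols,
\[
\max_{s\in S} d(w,s)\ \le\ d(w,S)+1 .
\]
Everything else is a two-line squeeze using this window bound together with the defining property of $\qcc(S)$.

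Next I would feed each of $u$ and $v$ into the other's quasi-eccentricity condition. Since $v\in\qcc(S)$, applying the definition with test vertex $u$ produces some $s'\in S$ with $d(v,s')\ge d(u,s')$. Because $s'\in S$ we have $d(u,s')\ge d(u,S)$, and by the window bound $d(v,s')\le d(v,S)+1$; chaining these gives
\[
d(u,S)\ \le\ d(u,s')\ \le\ d(v,s')\ \le\ d(v,S)+1 ,
\]
i.e.\ $d(u,S)-d(v,S)\le 1$. Running the symmetric argument, using $u\in\qcc(S)$ with test vertex $v$, yields in the same way $d(v,S)-d(u,S)\le 1$. The two inequalities together are precisely $|d(u,H[f])-d(v,H[f])|\le 1$, which is the claim.

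There is no serious obstacle here: the whole content lies in the observation that the triangle $H[f]$ forces the distances from any vertex to $S$ to lie in a window of width $1$, after which the two quasi-eccentricity conditions pin $d(u,S)$ and $d(v,S)$ to within $1$ of each other. The only points I would be careful to state explicitly are that the window bound remains valid when $u$ or $v$ lies on the boundary of $f$ (the case $d(\cdot,S)=0$), and that in each application it is the \emph{other} vertex that plays the role of the test vertex $w$ in the definition of $\qcc(S)$ --- neither $u$ nor $v$ is ever required to witness anything about itself.
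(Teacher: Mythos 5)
Your proof is correct and is essentially the paper's own argument: both hinge on the window bound $d(w,x)\le d(w,H[f])+1$ for all $x$ on the boundary of $f$ (valid because the boundary of a face of a maximal plane graph is a triangle), combined with the definition of quasi-eccentricity applied with the other vertex as the test vertex. The only difference is presentational --- you chain the inequalities directly in both directions, while the paper assumes $d(u,H[f])-d(v,H[f])\ge 2$ and derives a contradiction with the quasi-eccentricity of $v$; these are contrapositives of one another.
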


\begin{proof}
	Assume to the contrary, and without loss of generality, that $d(u, H[f]) - d(v, H[f]) \geq 2$, and let $w$ be any vertex of $H$. 
	Since all three vertices of $H[f]$ are mutually adjacent, we have that $d(w, x) \leq d(w, H[f])+1$ for all vertices $x$ in $H[f]$. 
	Thus for all vertices $x$ in $H[f]$, we have that $d(u,x) > d(v,x)$, which contradicts the fact that $v$ is quasi-eccentric to $f$.
\end{proof}

We deduce from Lemma \ref{lem_mpg_qef_diffofone} that if the quasi-eccentricity $q(H[f]) = k$, then the distance between $H[f]$ and any vertex of $\qcc(H[f])$ is either $k$ or $k+1$. 

In order to establish stronger constraints on the relationship between $f$ and $\qcc(H[f])$, we need to begin describing the configuration of $\qcc(H[f])$ in more detail. 
If $u$ is a vertex of $H$, and $f:x,y,z$ is a face of $H$, observe that $d(u,H[f])$ is the minimum of the three distances $d(u,x)$, $d(u,y)$ and $d(u,z)$.

Throughout the rest of this section, we will assume that $H$ is a maximal plane graph, and that $f:x,y,z$ is a face of $H$. 
Let $(f,k) \subseteq \qcc(H[f])$ denote the set of all quasi-eccentric vertices of $H[f]$ that are distance $k$ from $H[f]$. 
If the context makes it clear that we are referring to the face $f$, we refer to this set as $(k)$. 
For a vertex $t$ in $\{x,y,z\}$, we add a subscript $(k)_t$ to denote the subset of $(k)$ consisting of the vertices $u$ satisfying $d(u,t) = d(u,H[f]) = k$. 
We add a superscript $(k)^t$ to indicate the set of vertices $u$ in $(k)$ satisfying $d(u,t) = d(u,H[f])+1 = k+1$. 
Thus, if we say that $u$ is in $(k)^{y,z}_x$, it means that $u$ is a quasi-eccentric vertex of $f$ satisfying both $k = d(u,H[f]) = d(u,x)$ and $k+1 = d(u,y) = d(u,z)$.

\begin{figure}[h]
	\centering
	\begin{tikzpicture}
	[inner sep=1mm, 
	vertex/.style={circle,thick,draw},
	dvertex/.style={rectangle,thick,draw, inner sep=1.0mm}, 
	thickedge/.style={line width=1.3pt}]
	
	\pgfmathsetmacro{\x}{1}
	
	\filldraw[black!15] (-1,4)--(1,4)--(0,3)--(-1,4);
	
	\node[vertex] (a) at (0,6.5) [label=90:{$a$}] {};
	\node[vertex] (b) at (0,1) [label=270:{$b$}] {};
	\node[vertex] (c) at (-5,0.3) [label=270:{$c$}] {};
	\node[vertex] (d) at (5,0.3) [label=270:{$d$}] {};
	
	\node[vertex, fill=black] (x) at (-1,4) [label=135:{$x$}] {};
	\node[vertex, fill=black] (y) at (1,4) [label=45:{$y$}] {};
	\node[vertex, fill=black] (z) at (0,3) [label=90:{$z$}] {};
	
	\node[vertex] (e) at (-1,2) {};
	\node[vertex] (f) at (0,2) {};
	\node[vertex] (g) at (1,2) {};
	
	\draw[thickedge] (x)--(y)--(z)--(x);
	\draw (a)--(c)--(d)--(a) (x)--(a)--(y);
	\draw (x)--(c)--(e)--(x) (y)--(d)--(g)--(y);
	\draw (c)--(b)--(d);
	\draw (z)--(e)--(b)--(g)--(z) (z)--(f)--(b) (e)--(f)--(g);
	
	\end{tikzpicture}
	\caption{The graph $G^*$ is a maximal plane graph. The face $f^*:x,y,z$ is shaded grey.}
	\label{fig_k_notation}
\end{figure}
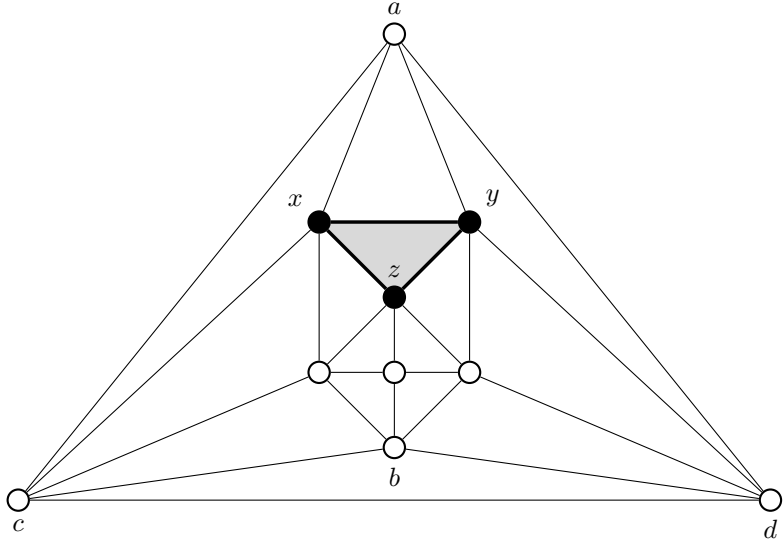

For example, consider the maximal plane graph $G^*$ in Figure \ref{fig_k_notation}. 
The face $f^*:x,y,z$ is shaded grey. 
All of the vertices $x$, $y$ and $z$ have eccentricity 2, and every vertex of $G^*-\{x,y,z\}$ is distance 2 from at least one of $x$, $y$ or $z$. 
We thus deduce that every vertex of $G^*-\{x,y,z\}$ is a quasi-eccentric vertex of $G^*[f^*]$. 
Since the distances $d(a,x) = d(a,y) = 1$, and $d(a,z) = 2$, the vertex $a$ is in $(1)^{z}_{x,y}$, and the quasi-eccentricity $q(G^*[f^*]) = 1$. 
Similarly, vertex $b$ is in $(2)_{x,y,z}$, vertex $c$ is in $(1)^{y,z}_x$, and vertex $d$ is in $(1)^{x,z}_y$.

Should $q(H[f]) = k$, there will always be a vertex in $(k)$, and by Lemma \ref{lem_mpg_qef_diffofone}, all vertices in $\qcc(H[f])$ will be vertices of $(k)$ or $(k+1)$. 
Further, if a vertex $u$ is in $(k)$, then for all vertices $t$ in $\{x,y,z\}$, we have that $d(u,t) = k$ or $d(u,t) = k+1$, since the vertices $x$, $y$ and $z$ are all mutually adjacent. 
Similarly, if $u$ is in $(k+1)$, then $d(u,t) = k+1$ or $d(u,t) = k+2$ for all $t$ in $\{x,y,z\}$.
The next series of lemmas will establish other constraints on the configuration of $\qcc(H[f])$. 
For all of these lemmas, assume that $H$ is a maximal plane graph and that $f:x,y,z$ is a face of $H$.
Unless explicitly stated otherwise, we make no assumptions about the value of $q(H[f])$.

\begin{lem}
	If $q(H[f]) = k$ and $(k)_{x,y,z}$ is non-empty, then every vertex of $\qcc(H[f])$ is in $(k)$.
	\label{lem_ak_rest}
\end{lem}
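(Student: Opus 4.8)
The plan is to argue by contradiction directly from the definition of quasi-eccentricity, using the hypothesis as a rigidity statement: a vertex equidistant at distance $k$ from all of $x$, $y$, $z$ is the tightest possible quasi-eccentric vertex, so nothing can lie strictly beyond it. First I would record the shape of $\qcc(H[f])$. Since $q(H[f]) = k$ and $q(H[f]) = d(H[f], \qcc(H[f]))$ is a minimum of distances, every vertex of $\qcc(H[f])$ is at distance at least $k$ from $H[f]$; by Lemma \ref{lem_mpg_qef_diffofone} the distances of any two quasi-eccentric vertices differ by at most one, so every such vertex lies in $(k) \cup (k+1)$. It therefore suffices to prove that $(k+1)$ is empty.

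Next I would fix a vertex $w \in (k)_{x,y,z}$, which exists by hypothesis, so that $d(w,x) = d(w,y) = d(w,z) = k$. Suppose, toward a contradiction, that some vertex $v$ lies in $(k+1)$. Then $d(v, H[f]) = k+1$, which, since $d(v, H[f])$ is the minimum of $d(v,x), d(v,y), d(v,z)$, forces $d(v,t) \geq k+1$ for every $t \in \{x,y,z\}$. Consequently $d(v,t) \geq k+1 > k = d(w,t)$ for all $t \in V(H[f])$; that is, $v$ is strictly farther from every vertex of $H[f]$ than $w$ is.

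The contradiction is then immediate: applying the definition of $\qcc_H(H[f])$ to the vertex $w$ with this particular choice of $v$, there would have to exist some $t \in V(H[f])$ with $d(w,t) \geq d(v,t)$, yet we have just shown $d(w,t) < d(v,t)$ for every such $t$. This contradicts $w \in \qcc(H[f])$. Hence $(k+1)$ is empty and every vertex of $\qcc(H[f])$ lies in $(k)$. I do not expect a genuine obstacle here; the only delicate point is bookkeeping, namely correctly negating the quasi-eccentricity condition and recognising that membership in $(k)_{x,y,z}$ supplies exactly the witness that rules out any strictly more distant quasi-eccentric vertex.
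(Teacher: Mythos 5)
Your proof is correct and follows essentially the same route as the paper's: both reduce to $(k)\cup(k+1)$ via Lemma \ref{lem_mpg_qef_diffofone}, then derive a contradiction from a vertex $v\in(k+1)$ being strictly farther than the witness in $(k)_{x,y,z}$ from every vertex of $H[f]$. Your write-up merely spells out the negation of the quasi-eccentricity condition more explicitly, which the paper leaves implicit.
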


\begin{proof}
	By Lemma \ref{lem_mpg_qef_diffofone}, and since $q(H[f]) = k$, every vertex of $\qcc(H[f])$ is either distance $k$ or distance $k+1$ from $H[f]$.
	Assume to the contrary there is exists a vertex $u$ in $(k)_{x,y,z}$, and a vertex $v$ in $(k+1)$. 
	Then $d(v,t) > d(u,t)$ for all $t$ in $\{x,y,z\}$, contradicting that $u$ is quasi-eccentric to $H[f]$.
\end{proof}

\begin{lem}
	If $(k)_{x,y}^z$ is non-empty, then $(k+1)_{x,y}^z$ is empty, and vice-versa. 
	Similarly, at most one of $(k)_{x}^{y,z}$ and $(k+1)_{x}^{y,z}$ is non-empty.
	\label{lem_same_rest}
\end{lem}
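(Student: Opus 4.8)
The plan is to argue exactly as in the proof of Lemma \ref{lem_ak_rest}, by reading off and comparing distance vectors. First I would unpack the subscript/superscript notation: a vertex $u \in (k)^z_{x,y}$ satisfies $d(u,x) = d(u,y) = k$ and $d(u,z) = k+1$ by definition, so its distance vector relative to $f:x,y,z$ is $(k,k,k+1)$; similarly a vertex $v \in (k+1)^z_{x,y}$ satisfies $d(v,x) = d(v,y) = k+1$ and $d(v,z) = k+2$, giving distance vector $(k+1,k+1,k+2)$.

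Suppose, for contradiction, that both sets are non-empty, and choose such $u$ and $v$. Comparing the two vectors coordinatewise gives $d(v,t) > d(u,t)$ for every $t \in \{x,y,z\}$; equivalently, $d(u,t) < d(v,t)$ for all vertices $t$ on the boundary of $f$. Taking $v$ as the witness vertex in the definition of quasi-eccentricity, there is then no $s \in \{x,y,z\}$ with $d(u,s) \geq d(v,s)$, so $u$ is not quasi-eccentric to $H[f]$, contradicting $u \in (k)^z_{x,y} \subseteq \qcc(H[f])$. Since this shows the two sets cannot both be non-empty, it establishes both directions at once (``and vice-versa''). For the second statement I would repeat the argument verbatim after substituting the relevant vectors: $u \in (k)^{y,z}_x$ has distance vector $(k,k+1,k+1)$ and $v \in (k+1)^{y,z}_x$ has distance vector $(k+1,k+2,k+2)$, so again $d(v,t) > d(u,t)$ strictly for every $t \in \{x,y,z\}$, and the same appeal to the definition of quasi-eccentricity yields the contradiction.

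I do not expect a serious obstacle here: the whole content is the observation that the ``$(k+1)$'' vertex strictly dominates the ``$(k)$'' vertex in every coordinate of the distance vector, which immediately violates quasi-eccentricity of the latter. The only point requiring care is reading the subscript/superscript conventions correctly, so that all three coordinates are verified to be strictly larger — a merely weak inequality in even one coordinate would not break quasi-eccentricity, and the argument would fail. The fact that $x$, $y$, $z$ are mutually adjacent guarantees that the coordinates of any such distance vector differ by at most one, which is precisely what makes the two candidate vectors line up with a uniform strict gap across all three entries.
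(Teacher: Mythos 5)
Your proof is correct and is essentially the paper's own argument: both proofs observe that a vertex of $(k+1)_{x,y}^z$ would be strictly further from each of $x$, $y$ and $z$ than a vertex of $(k)_{x,y}^z$, contradicting the quasi-eccentricity of the latter, with the second claim handled identically. Your version merely makes the distance vectors explicit, which is a harmless elaboration rather than a different route.
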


\begin{proof}
	If $\qcc(H[f])$ did contain a vertex $u$ in $(k)_{x,y}^{z}$, and a vertex $v$ in $(k+1)_{x,y}^{z}$, then $v$ would be further from each of $x$, $y$ and $z$ than $u$, contradicting the quasi-eccentricity of $u$. 
	The case for $(k)_{x}^{y,z}$ and $(k+1)_{x}^{y,z}$ is similar.
\end{proof}

\begin{lem}
	If $(k+1)_x ^{y,z}$ is non-empty, then $(k)_{x}$ is empty.
	\label{lem_cforb_rest}
\end{lem}

\begin{proof}
	Assume to the contrary that $(k)_{x}$ contains a vertex $u$ and $(k+1)_{x} ^{y,z}$ contains a vertex $v$. Then $v$ is further from each of $x$, $y$ and $z$ than $u$ is. 
	This contradicts the fact that $u$ is quasi-eccentric to $H[f]$.
\end{proof}

We observe the following consequence of Lemma \ref{lem_cforb_rest}:

\begin{cor}
	If both $(k+1)_x ^{y,z}$ and $(k+1)_y ^{x,z}$ are non-empty, then $(k)_{s,t}$ is empty for any pair of distinct vertices $s$ and $t$ in $\{x,y,z\}$. 
	Further, if it is also true that $q(H[f])=k$, then $(k)_z ^{x,y}$ is non-empty.
	\label{cor_twocnob_rest}
\end{cor}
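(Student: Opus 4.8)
The plan is to reduce everything to Lemma~\ref{lem_cforb_rest}, which states that a non-empty $(k+1)_x^{y,z}$ forces $(k)_x$ to be empty. Since the setup $f:x,y,z$ is symmetric in its three vertices, this lemma holds with any of $x$, $y$, $z$ playing the distinguished role. Both $(k+1)_x^{y,z}$ and $(k+1)_y^{x,z}$ are assumed non-empty, so I would apply the lemma twice to conclude immediately that both $(k)_x$ and $(k)_y$ are empty. Everything else is bookkeeping on top of this single fact.

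For the first assertion, I would unpack the subscript notation: a vertex $u$ lies in $(k)_x$ precisely when $d(u,x)=d(u,H[f])=k$, with no constraint on which \emph{other} coordinates also equal $k$. Hence any vertex of $(k)_{x,y}$ or $(k)_{x,z}$ automatically lies in $(k)_x$, and any vertex of $(k)_{y,z}$ lies in $(k)_y$. Since $(k)_x$ and $(k)_y$ are both empty, each of the three sets $(k)_{x,y}$, $(k)_{x,z}$, $(k)_{y,z}$ is empty, which is exactly the first claim.

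For the second assertion I would invoke the extra hypothesis $q(H[f])=k$. By the definition of quasi-eccentricity as $d(H[f],\qcc(H[f]))$, this guarantees a quasi-eccentric vertex at distance exactly $k$ from $H[f]$, so $(k)$ is non-empty. I would then decompose $(k)$ according to which of $x$, $y$, $z$ realise the minimum distance $k$: every vertex of $(k)$ has at least one coordinate equal to $k$ and the rest equal to $k$ or $k+1$, so it falls into exactly one of the seven types $(k)_x^{y,z}$, $(k)_y^{x,z}$, $(k)_z^{x,y}$, $(k)_{x,y}^z$, $(k)_{x,z}^y$, $(k)_{y,z}^x$, $(k)_{x,y,z}$. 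Every type except $(k)_z^{x,y}$ has $d(u,x)=k$ or $d(u,y)=k$, hence lies in $(k)_x\cup(k)_y=\emptyset$. Therefore the non-empty set $(k)$ is contained in $(k)_z^{x,y}$, which must itself be non-empty.

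The argument is essentially a symmetry application of Lemma~\ref{lem_cforb_rest} followed by set-theoretic inclusions, so I do not expect a substantive obstacle. The only point requiring care is the exhaustiveness of the case split on $(k)$: I must confirm that the seven listed types cover every vertex of $(k)$ and that $(k)_z^{x,y}$ is the unique type escaping $(k)_x\cup(k)_y$. Once the seven types are enumerated this is immediate by inspection, and keeping the subscript/superscript conventions straight is the whole of the work.
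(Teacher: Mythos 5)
Your proof is correct and takes essentially the same approach as the paper's: two applications of Lemma \ref{lem_cforb_rest} (by symmetry) show $(k)_x$ and $(k)_y$ are empty, the first claim follows because every pair $\{s,t\}$ meets $\{x,y\}$, and the second because a vertex of the non-empty set $(k)$ must then satisfy $d(u,x)=d(u,y)=k+1$ and $d(u,z)=k$, placing it in $(k)_z^{x,y}$. Your explicit enumeration of the seven types is merely a more verbose rendering of the paper's one-line conclusion, so there is no substantive difference.
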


\begin{proof}
	By Lemma \ref{lem_cforb_rest}, if both $(k+1)_x ^{y,z}$ and $(k+1)_y ^{x,z}$ are non-empty, then both $(k)_x$ and $(k)_y$ are empty. 
	Since $\{s,t\}$ must contain either $x$ or $y$, the set $(k)_{s,t}$ is a subset of either $(k)_x$ or $(k)_y$. 
	Thus $(k)_{s,t}$ is empty.
	
	If $q(H[f]) = k$, then there must exist some vertex $u$ in $(k)$. 
	By Lemma \ref{lem_cforb_rest}, the vertex $u$ cannot be in $(k)_x$ or $(k)_y$, thus $u$ is in $(k)_z ^{x,z}$.
\end{proof}

As a consequence of Lemmas \ref{lem_ak_rest}, \ref{lem_same_rest} and \ref{lem_cforb_rest}, we deduce another corollary:

\begin{cor}
	If $q(H[f]) = k$, then at most two of the sets $(k+1)_x ^{y,z}$, $(k+1)_y ^{x,z}$ and $(k+1)_z ^{x,y}$ are non-empty.
	\label{cor_no_three_c_rest}
\end{cor}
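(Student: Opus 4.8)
The plan is to argue by contradiction, using Lemma \ref{lem_cforb_rest} together with the fact that the hypothesis $q(H[f]) = k$ forces the set $(k)$ to be non-empty. Suppose, toward a contradiction, that all three of $(k+1)_x^{y,z}$, $(k+1)_y^{x,z}$, and $(k+1)_z^{x,y}$ are non-empty. Although Lemma \ref{lem_cforb_rest} is stated for the vertex $x$, it holds verbatim for $y$ and $z$ by the symmetry of the roles of $x$, $y$, $z$ on the face $f$. Applying it three times, once for each of the three non-empty sets, yields that $(k)_x$, $(k)_y$, and $(k)_z$ are all empty.

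The second step is to observe that these three subscript-sets together exhaust $(k)$. Indeed, for any vertex $u$ in $(k)$ we have $d(u, H[f]) = k$, and since $d(u, H[f])$ is the minimum of $d(u,x)$, $d(u,y)$, $d(u,z)$, at least one of these three distances equals $k$; hence $u$ lies in $(k)_x$, $(k)_y$, or $(k)_z$. This gives the covering identity $(k) = (k)_x \cup (k)_y \cup (k)_z$.

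Combining the two steps produces the contradiction: the assumption $q(H[f]) = k$ guarantees a vertex in $(k)$, yet the first step has forced each of $(k)_x$, $(k)_y$, $(k)_z$ to be empty, so $(k)$ would itself be empty. Therefore the three sets cannot all be non-empty, and at most two of them are, as claimed.

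I expect the only delicate point to be making the covering identity $(k) = (k)_x \cup (k)_y \cup (k)_z$ explicit, since it is precisely what lets the three instances of Lemma \ref{lem_cforb_rest} be assembled into a single contradiction; once this is noted the argument is immediate. (An alternative route, which is where Lemmas \ref{lem_ak_rest} and \ref{lem_same_rest} would enter through Corollary \ref{cor_twocnob_rest}: if, say, $(k+1)_x^{y,z}$ and $(k+1)_y^{x,z}$ are both non-empty, then that corollary supplies a vertex $u$ in $(k)_z^{x,y}$, and any vertex of $(k+1)_z^{x,y}$ would be strictly further than $u$ from each of $x$, $y$, $z$, contradicting the quasi-eccentricity of $u$.)
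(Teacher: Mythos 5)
Your proof is correct and follows essentially the same route as the paper's own proof: apply Lemma \ref{lem_cforb_rest} (by symmetry) to each of the three sets to conclude that $(k)_x$, $(k)_y$ and $(k)_z$ are all empty, and then contradict $q(H[f]) = k$, which forces $(k)$ to be non-empty. Your explicit statement of the covering identity $(k) = (k)_x \cup (k)_y \cup (k)_z$ is a nice clarification of a step the paper leaves implicit, but it is the same argument.
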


\begin{proof}
	If all three sets listed in the hypothesis are non-empty, then all of the sets $(k)_x$, $(k)_y$ and $(k)_z$ are empty, by Lemma \ref{lem_cforb_rest}. 
	Thus no vertex of $\qcc(H[f])$ is distance $k$ from any of the vertices $x$, $y$ or $z$ in $H[f]$, contradicting the assumption that $q(H[f]) = k$.
\end{proof}

We are ready to begin proving that the quasi-eccentric face criterion of Theorem \ref{thm_qef_criterion} is sufficient when $H$ is a maximal plane graph. 
We show that given a face $f:x,y,z$ of $H$, and an integer $\alpha \geq \text{diam}(H)$, we can construct a plane supergraph $G_f$ such that $G_f[f]$ has, for each vertex $u$ in $\qcc_H(H[f])$, a vertex at distance $\alpha$ from $u$. 
We will further show that no vertex of $G_f[f]$ is further than $\alpha$ from any vertex of $H$.

\begin{figure}[h]
	\centering
	\begin{tikzpicture}
	[inner sep=0.7mm, 
	vertex/.style={circle,thick,draw},
	dvertex/.style={rectangle,thick,draw, inner sep=1.0mm}, 
	thickedge/.style={line width=1.3pt}]
	
	\pgfmathsetmacro{\x}{1}
	\pgfmathsetmacro{\y}{0.5}
	
	\node[vertex, fill=black!60] (s) at (0,0) {};
	
	\node[vertex, fill=black] (x0) at (90:3*\x) [label=90:{$x_0$}] {};
	\node[vertex, fill=black] (y0) at (90+120:3*\x) [label=90+120:{$y_0$}] {};
	\node[vertex, fill=black] (z0) at (90+240:3*\x) [label=90+240:{$z_0$}] {};
	
	\node[vertex] (x1) at (90:2*\x) {};
	\node[vertex] (y1) at (90+120:2*\x) {};
	\node[vertex] (z1) at (90+240:2*\x) {};
	
	\node[vertex] (xy1) at (150:0.25 + 2*\y) {};
	\node[vertex] (yz1) at (150+120:0.25 + 2*\y) {};
	\node[vertex] (zx1) at (150+240:0.25 + 2*\y) {};
	
	\node[vertex] (xy2) at (150:0.25 + 1*\y) {};
	\node[vertex] (yz2) at (150+120:0.25 + 1*\y) {};
	\node[vertex] (zx2) at (150+240:0.25 + 1*\y) {};
	
	\node[vertex, fill=black!40] (x2) at (90:1*\x) {};
	\node[vertex, fill=black!40] (y2) at (90+120:1*\x) {};
	\node[vertex, fill=black!40] (z2) at (90+240:1*\x) {};
	
	\draw[thickedge] (x0)--(y0)--(z0)--(x0);
	\draw (x1)--(y1)--(z1)--(x1);
	\draw (x2)--(y2)--(z2)--(x2);
	
	\draw (x0)--(xy1)--(y0) (x1)--(xy1)--(y1);
	\draw (x1)--(xy2)--(y1) (x2)--(xy2)--(y2);
	
	\draw (y0)--(yz1)--(z0) (y1)--(yz1)--(z1);
	\draw (y1)--(yz2)--(z1) (y2)--(yz2)--(z2);
	
	\draw (z0)--(zx1)--(x0) (z1)--(zx1)--(x1);
	\draw (z1)--(zx2)--(x1) (z2)--(zx2)--(x2);
	
	\draw (x0)--(x1)--(x2)--(s);
	\draw (y0)--(y1)--(y2)--(s);
	\draw (z0)--(z1)--(z2)--(s);
	
	\node at (150:2.7*\x) {$\Gamma(3)$};
	
	\begin{scope}[shift={(8,0)}]
	\node[vertex, fill=black!60] (s) at (0,0) [label=270:{$s$}] {} ;
	
	\node[vertex, fill=black] (x0) at (90:3*\x) [label=90:{$x_0$}] {};
	\node[vertex, fill=black] (y0) at (90+120:3*\x) [label=90+120:{$y_0$}] {};
	\node[vertex, fill=black] (z0) at (90+240:3*\x) [label=90+240:{$z_0$}] {};
	
	\node[vertex, fill=black!40] (x1) at (90:2*\x) {};
	\node[vertex, fill=black!40] (y1) at (90+120:2*\x) {};
	\node[vertex, fill=black!40] (z1) at (90+240:2*\x) {};
	
	\draw[thickedge] (x0)--(y0)--(z0)--(x0);
	\draw[thickedge] (x1)--(y1)--(z1)--(x1);
	
	\draw (x1)--(s) (y1)--(s) (z1)--(s);
	\draw[thickedge, dotted] (x0)--(x1) (y0)--(y1) (z0)--(z1);
	
	\node at (150+120:0.25 + 2*\y) {$\delta$};
	
	\node at (150:2.7*\x) {$\Gamma(\delta)$};
	\end{scope}
	
	\end{tikzpicture}
	\caption{Left: the graph $\Gamma(3)$. 
		The vertices $x_0$, $y_0$ and $z_0$ are shaded black. The innermost vertex $s$ and the vertices $x_2$, $y_2$ and $z_2$ are all shaded grey. 
		Right: the way we will normally display the graph $\Gamma(\delta)$.} 
	\label{fig_gamma_1}
\end{figure}
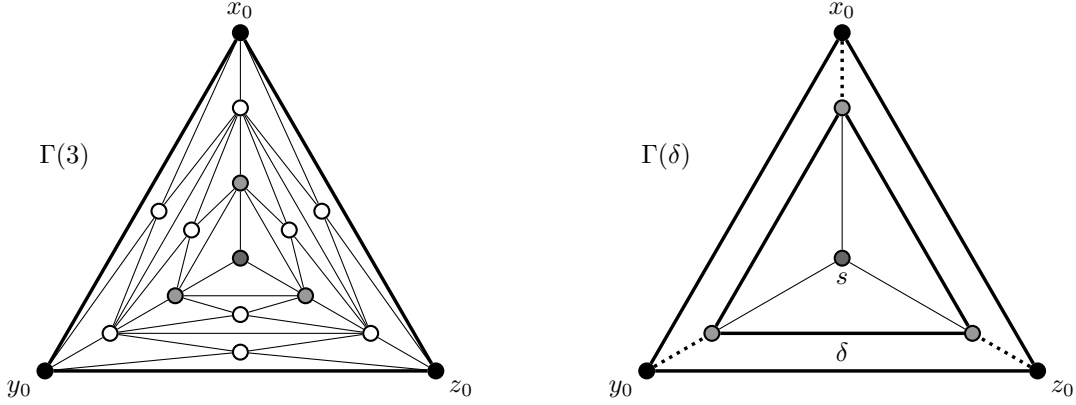

Let $\delta \geq 1$ be an integer, which we call the \textbf{depth} of the construction. 
Create a maximal plane graph $\Gamma(\delta)(f)$ (when the face $f$ in question is clear from context, we will just refer to the graph as $\Gamma(\delta)$) as follows:\\ 
Create the graph $P_\delta \times C_3$ and denote by $T_i$ the $i^{th}$ copy of $C_3$, with vertices $\{x_i, y_i, z_i\}$, where $i$ is in $[0,\delta-1]$. 
In the triangle $T_{\delta-1}$, place a vertex $s$, and make $s$ adjacent to the vertices $x_{\delta-1}$, $y_{\delta-1}$ and $z_{\delta-1}$. 
In the interior of each face bounded by a 4-cycle, add a single vertex, and make it adjacent to each vertex contained in the boundary of the face. 
(see Figure \ref{fig_gamma_1}).
For $\delta = 0$, let $\Gamma(\delta)$ be the triangle $T_0$ with vertex set $\{x_0, y_0, z_0\}$.
In order to build the desired maximal plane supergraph $G_f$ from $H$, we identify the vertices $x$, $y$ and $z$ of $H$ with the vertices $x_0$, $y_0$ and $z_0$ of $\Gamma(\delta)$.
We will call this operation of taking the union $H\cup \Gamma(\delta)$ and identifying $x$, $y$ and $z$ with $x_0$, $y_0$ and $z_0$ respectively the \textbf{glueing} of $\Gamma(\delta)$ along the face $f$.

\begin{obs}
	Consider the graph $\Gamma(\delta)$, where $\delta \geq 0$. 
	The vertex $x_0$ is distance at most $\delta$ from every vertex of $\Gamma(\delta)$, is distance exactly $\delta$ from $s$, $y_{\delta-1}$ and $z_{\delta-1}$, and is distance $\delta-1$ from $x_{\delta-1}$.
	\label{obs_dist_in_face}
\end{obs}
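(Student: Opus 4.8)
The plan is to pin down each relevant distance by sandwiching it between a matching upper and lower bound. Upper bounds will come from exhibiting explicit short paths out of $x_0$, and lower bounds from a single carefully chosen ``potential'': an integer vertex-labelling $\psi$ that changes by at most $1$ across every edge. Throughout I take $\delta \geq 1$, since for $\delta = 0$ the vertices $s$ and $x_{\delta-1}, y_{\delta-1}, z_{\delta-1}$ do not exist and $\Gamma(0)$ is merely the triangle $T_0$, so the substantive content of the statement concerns $\delta \geq 1$.

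For the upper bounds, walking inward along one column gives, for each $i$ in $[0,\delta-1]$, the path $x_0, x_1, \dots, x_i$ of length $i$, so $d(x_0, x_i) \leq i \leq \delta - 1$; in particular $d(x_0, x_{\delta-1}) \leq \delta - 1$. Prepending the triangle edge $x_0 y_0$ (respectively $x_0 z_0$) to the analogous column path yields $d(x_0, y_i), d(x_0, z_i) \leq i+1$, so $d(x_0, y_{\delta-1}), d(x_0, z_{\delta-1}) \leq \delta$, and appending the edge into $s$ gives $d(x_0, s) \leq \delta$. Every remaining vertex is a subdivision vertex in a $4$-cycle face between $T_i$ and $T_{i+1}$ with $i \leq \delta - 2$, hence adjacent to a triangle vertex of $T_i$ at distance at most $i+1$ from $x_0$, so its distance from $x_0$ is at most $i + 2 \leq \delta$. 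This shows $x_0$ is within distance $\delta$ of every vertex.

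The crux is the matching lower bounds, and the only delicate point is showing that $d(x_0, y_{\delta-1})$ and $d(x_0, z_{\delta-1})$ are as large as $\delta$ rather than merely $\delta - 1$: the naive ``depth'' labelling assigning each triangle $T_i$ the value $i$ is $1$-Lipschitz but gives $y_{\delta-1}$ only the value $\delta - 1$. I would instead use the asymmetric potential $\psi$ that singles out the $x$-column, setting $\psi(x_i) = i$, $\psi(y_i) = \psi(z_i) = i+1$, $\psi(s) = \delta$, and $\psi(w) = i+1$ for each subdivision vertex $w$ in the face between $T_i$ and $T_{i+1}$. The main step is the essential (but routine) verification that $|\psi(u) - \psi(v)| \leq 1$ across every edge of $\Gamma(\delta)$, handled by a short case-check over triangle edges, column edges, subdivision edges, and the three edges at $s$. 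Since $\psi(x_0) = 0$ and $\psi$ changes by at most $1$ along each edge, every $x_0$–$v$ path has length at least $\psi(v)$, so $d(x_0, v) \geq \psi(v)$.

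Combining the two bounds finishes the argument: $\psi(s) = \psi(y_{\delta-1}) = \psi(z_{\delta-1}) = \delta$ together with the upper bound forces these three distances to equal $\delta$, while $\psi(x_{\delta-1}) = \delta - 1$ forces $d(x_0, x_{\delta-1}) = \delta - 1$. I expect the only real friction to lie in the design and checking of $\psi$: it must break the $x/y/z$ symmetry in precisely the way that lifts the labels of the off-column innermost vertices to $\delta$ while remaining $1$-Lipschitz, and confirming that the subdivision vertices (which touch two consecutive layers) respect the Lipschitz bound is the subtle, if minor, point of the proof.
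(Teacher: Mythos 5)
Your proposal is correct. Note, however, that the paper offers no proof of this statement at all: it is presented as an Observation, with the distances left to direct inspection of the construction of $\Gamma(\delta)$, so there is no ``paper proof'' to compare against. Your argument is a sound way of making the inspection rigorous. The upper bounds via the column paths $x_0, x_1, \dots, x_i$ (optionally prefixed by $x_0y_0$ or $x_0z_0$ and suffixed by the edge into $s$) are exactly the geodesics one reads off the figure, and your $1$-Lipschitz potential $\psi$ with $\psi(x_i)=i$, $\psi(y_i)=\psi(z_i)=i+1$, $\psi(s)=\delta$, and $\psi(w)=i+1$ on the face-center vertices between $T_i$ and $T_{i+1}$ does verify the Lipschitz condition on every edge class (triangle edges, column edges, the four edges at each face-center, and the three edges at $s$), yielding $d(x_0,v)\geq \psi(v)$. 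You correctly isolate the one genuinely non-obvious point, namely that $d(x_0,y_{\delta-1})$ and $d(x_0,z_{\delta-1})$ equal $\delta$ rather than $\delta-1$, which the symmetric layer-labelling cannot see; an equivalent lighter alternative is to observe that every neighbour of $y_{\delta-1}$ other than $y_{\delta-2}$, $z_{\delta-2}$ and the adjacent face-centers lies in layer $\delta-1$ or deeper and then induct on layers, but your potential handles all four claims uniformly. Your restriction to $\delta\geq 1$ is also the sensible reading: as literally stated for $\delta=0$ the claim ``distance at most $0$ from every vertex'' fails for the triangle $\Gamma(0)$, and the vertices $s$, $x_{\delta-1}$, $y_{\delta-1}$, $z_{\delta-1}$ do not exist, so the content of the observation genuinely begins at depth $1$.
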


We will normally only be interested in what is happening in the faces $s,x_{\delta - 1}, y_{\delta - 1}$; $s,y_{\delta - 1}, z_{\delta - 1}$ and $s,x_{\delta - 1}, z_{\delta - 1}$ of $\Gamma(\delta)$, and will add vertices and edges inside these faces as needed. 
As such, we will leave out the additional clutter in our diagrams (see Figure \ref{fig_gamma_1}). 

For the following lemmas, let $H$ be a maximal plane graph, let $f:x,y,z$ be a face of $H$, and let $\alpha \geq \text{diam}(H)$ be an integer.
These lemmas will demonstrate how to construct the graph $G_f$ for some of the possible configurations of $\qcc_H(H[f])$. 
By Lemma \ref{lem:mpgiso}, the graph $H$ will always be an isometric subgraph of the graph $G_f$.
Therefore, if $u$ and $v$ are vertices of $H$, there is no ambiguity in referring to `the distance $d(u,v)$'.
%

\begin{lem}
	If there is some integer $k< \alpha$ such that every vertex of $\qcc_H(H[f])$ is distance $k$ from $H[f]$, then:
	Letting $\delta = \alpha-k$, and letting $G_f$ be the graph formed by gluing $\Gamma(\delta)$ to $H$ along $f$, every vertex of $\qcc_H(H[f])$ has eccentricity $\alpha$ in $G_f$, and no vertex of $H$ has eccentricity greater than $\alpha$ in $G_f$.
	\label{lem_const_1}
\end{lem}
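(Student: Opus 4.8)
The plan is to analyse all distances in $G_f$ through the separating triangle $\{x,y,z\}$. Since the gluing turns $G_f$ into a maximal plane graph containing both $H$ and $\Gamma(\delta)$ as maximal planar subgraphs, Lemma \ref{lem:mpgiso} tells us that both are isometric in $G_f$; in particular $d_{G_f}$ agrees with $d_H$ on $V(H)$ and with $d_{\Gamma(\delta)}$ on $V(\Gamma(\delta))$. Every vertex of $\Gamma(\delta)$ other than $x_0,y_0,z_0$ lies strictly inside $f$, so $\{x,y,z\}$ separates these new vertices from the rest of $H$. Hence, for $u\in V(H)$ and any new vertex $w$, every $u$--$w$ path meets $\{x,y,z\}$, and splitting such a path as in Lemma \ref{lem_sep_different_components} would give
\[
d_{G_f}(u,w) = \min_{t\in\{x,y,z\}}\bigl(d_H(u,t) + d_{\Gamma(\delta)}(t,w)\bigr).
\]

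I would first establish the lower bound. Fixing $u\in \qcc_H(H[f])$ and looking at the apex vertex $s$ of $\Gamma(\delta)$, Observation \ref{obs_dist_in_face} together with the three-fold symmetry of $\Gamma(\delta)$ shows that each of $x_0,y_0,z_0$ is at distance exactly $\delta$ from $s$. The displayed formula then yields $d_{G_f}(u,s) = \delta + \min_t d_H(u,t) = \delta + d(u,H[f]) = \delta + k = \alpha$, so $e_{G_f}(u)\geq \alpha$ for every quasi-eccentric vertex $u$.

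The key step, and the main obstacle, is to bound all distances from above by $\alpha$, which I would reduce to the claim that \emph{every} vertex of $H$ lies within distance $k$ of $H[f]$. This is exactly where the hypothesis is used: if $u$ realises $M := \max_{w\in V(H)} d(w,H[f])$ but were not quasi-eccentric to $f$, there would be a vertex $v$ with $d(v,t) > d(u,t) \geq M$ for all $t\in\{x,y,z\}$, forcing $d(v,H[f]) > M$ and contradicting the maximality of $M$. Hence any vertex realising $M$ is quasi-eccentric, so by hypothesis $M = k$, i.e.\ $d(w,H[f])\leq k$ for all $w\in V(H)$.

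With this claim the upper bound becomes routine. If $u,w\in V(H)$, then $d_{G_f}(u,w) = d_H(u,w) \leq \text{diam}(H)\leq \alpha$. If $w$ is a new vertex, then the separator formula together with Observation \ref{obs_dist_in_face} (each corner is within $\delta$ of all of $\Gamma(\delta)$) gives $d_{G_f}(u,w) \leq d(u,H[f]) + \delta \leq k + \delta = \alpha$. Thus no vertex of $H$ has eccentricity exceeding $\alpha$, and combining this with the lower bound shows that every vertex of $\qcc_H(H[f])$ has eccentricity exactly $\alpha$ in $G_f$, as required.
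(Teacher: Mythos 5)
Your proof is correct and follows essentially the same route as the paper's: both arguments rest on the facts that every vertex of $H$ lies within distance $k$ of $H[f]$ (your inline maximality argument is just a reproof of Observation \ref{obs_ecc_is_qcc}), that every vertex of $\Gamma(\delta)$ is within $\delta$ of each corner (Observation \ref{obs_dist_in_face}), and that $H[f]$ separates the glued part from $H$, forcing $d(u,s)=k+\delta=\alpha$ for quasi-eccentric $u$. Your explicit separator distance formula is a slightly more systematic packaging of the same computation, but introduces no new idea.
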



\begin{proof}
	Per Observation \ref{obs_ecc_is_qcc}, the set $\ecc_H(H[f])$ is a subset of $\qcc_H(H[f])$, so every vertex of $H$ is within distance $k$ of $H[f]$. 
	By Observation \ref{obs_dist_in_face}, every vertex of $\Gamma(\delta)$ is within distance $\delta = \alpha - k$ of each vertex of $H[f]$, and so every vertex of $H$ is within distance $\alpha$ of every vertex of $\Gamma(\delta)$. 
	Consequently, every vertex $u$ in $H$ satisfies the inequality $e_{G_f}(u)\leq \alpha$. 
	
	Let $s$ be the unique vertex of $\Gamma(\delta)$ that is distance $\delta = \alpha - k$ from $H[f]$ and observe that $H[f]$ separates $s$ from $\qcc_H(H[f])$. 
	Since the vertices of $\qcc_H(H[f])$ are distance $k$ from $H[f]$, and all the vertices of $H[f]$ are distance $\delta = \alpha - k$ from $s$, every vertex of $\qcc_H(H[f])$ is distance $\alpha$ from $s$.
	Thus every vertex of $\qcc_H(H[f])$ has eccentricity $\alpha$ in $G_f$.
\end{proof}

In Lemma \ref{lem_const_1}, we assumed that $k$ was strictly less than $\alpha$. 
If $k$ is equal to $\alpha$, i.e., every vertex of $\qcc_H(H[f])$ is distance $\alpha$ from some vertex of $H[f]$, then every vertex of $\qcc_H(H[f])$ has eccentricity $\alpha$ in $H$. 
Therefore, if $k$ is equal to $\alpha$, choosing $G_f$ to be the graph $H$ will suffice.

We consider a small modification to the construction of $\Gamma(\delta)$. 
For a face $f:x,y,z$ of $H$ and positive integer $\delta$, construct $\Gamma(\delta)$ as usual. 
Denote by $\Gamma_x(\delta)$ the graph obtained as follows:\\
Place one additional vertex, call it $t_x$, in the face $f_x : s, y_{\delta-1}, z_{\delta-1}$ of $\Gamma(\delta)$. Then, make the vertex $t_x$ adjacent to all three vertices on the boundary of $f_x$. 
(Figure \ref{fig_gamma_x}) 

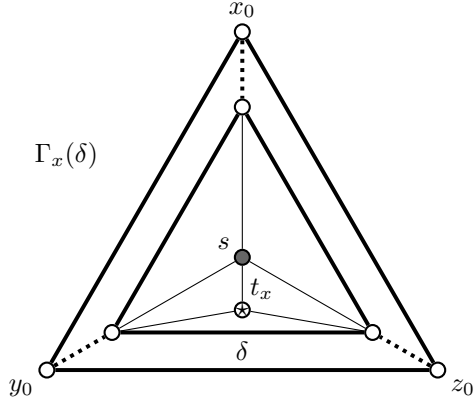
\begin{figure}[h]
	\centering
	\begin{tikzpicture}
	[inner sep=0.7mm, 
	vertex/.style={circle,thick,draw},
	dvertex/.style={rectangle,thick,draw, inner sep=1.3mm}, 
	thickedge/.style={line width=1.5pt}]
	
	\pgfmathsetmacro{\x}{1}
	\pgfmathsetmacro{\y}{0.5}
	
	\node[vertex, fill=black!60] (s) at (0,0) {} ;
	
	\node[vertex] (x0) at (90:3*\x) [label=90:{$x_0$}] {}; 
	\node[vertex] (y0) at (90+120:3*\x) [label=90+120:{$y_0$}] {};
	\node[vertex] (z0) at (90+240:3*\x) [label=90+240:{$z_0$}] {};
	
	\node[vertex] (x1) at (90:2*\x) {}; 
	\node[vertex] (y1) at (90+120:2*\x) {};
	\node[vertex] (z1) at (90+240:2*\x) {};
	
	\draw[thickedge] (x0)--(y0)--(z0)--(x0);
	\draw[thickedge] (x1)--(y1)--(z1)--(x1);
	
	\draw (x1)--(s) (y1)--(s) (z1)--(s);
	\draw[thickedge, dotted] (x0)--(x1) (y0)--(y1) (z0)--(z1);
	
	\node at (150+120:0.25 + 2*\y) {$\delta$};
	
	\node[vertex] (tx) at (270:0.7) {};
	\node at (270:0.7) {$\star$};
	\draw (tx)--(s) (tx)--(y1) (tx)--(z1);
	
	\node at (290:0.75) [label=:{$t_x$}] {};
	
	\node at (190:0.25) [label=:{$s$}] {};
	
	\node at (150:2.7*\x) {$\Gamma_{x}(\delta)$};
	
	\end{tikzpicture}
	\caption{The graph $\Gamma_x(\delta)$. The vertex $t_x$, 
		lies in the face $s,y_{\delta-1}, z_{\delta-1}$ of $\Gamma(\delta)$ and satisfies the equation $d(x_0, t_x) = \delta + 1$.} 
	\label{fig_gamma_x}
\end{figure}

\begin{obs}
	The vertex $x=x_0$ is distance at most $\delta$ from every vertex of $\Gamma_x(\delta)$ except for $t_x$, from which it is distance $\delta+1$. 
	Further, both $y$ and $z$ are distance exactly $\delta$ from $s$, and distance at most $\delta$ from every other vertex of $\Gamma_x(\delta)$.
	\label{obs_dist_in_gam_x}
\end{obs}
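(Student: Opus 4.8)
The plan is to reduce everything to Observation~\ref{obs_dist_in_face} by exploiting the fact that $\Gamma_x(\delta)$ differs from $\Gamma(\delta)$ only by the single added vertex $t_x$, whose three neighbours $s$, $y_{\delta-1}$, $z_{\delta-1}$ are pairwise adjacent in $\Gamma(\delta)$. The first step is to argue that adjoining $t_x$ creates no new shortcuts between vertices already present in $\Gamma(\delta)$, so that the distances among those vertices are unchanged. Any walk that enters and then leaves $t_x$ does so through two of its neighbours $a,b \in \{s, y_{\delta-1}, z_{\delta-1}\}$, traversing the length-$2$ detour $a,t_x,b$; since $ab$ is an edge of $\Gamma(\delta)$, this detour can be replaced by the single edge $ab$ to yield a strictly shorter walk. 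Hence no geodesic between two old vertices uses $t_x$, and $d_{\Gamma_x(\delta)}(u,v) = d_{\Gamma(\delta)}(u,v)$ for all $u,v \in V(\Gamma(\delta))$.

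Granting this, the first half of the observation, concerning $x = x_0$, follows quickly. For every vertex $v \neq t_x$ the distance $d(x_0,v)$ equals its value in $\Gamma(\delta)$, which Observation~\ref{obs_dist_in_face} bounds by $\delta$. For $t_x$ itself, every $x_0$--$t_x$ path must pass through a neighbour of $t_x$, so $d(x_0,t_x) = 1 + \min\{d(x_0,s),\, d(x_0,y_{\delta-1}),\, d(x_0,z_{\delta-1})\}$; by Observation~\ref{obs_dist_in_face} each of these three distances is exactly $\delta$, giving $d(x_0,t_x) = \delta+1$, realised by appending the edge $s\,t_x$ to an $x_0$--$s$ geodesic.

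For the claim about $y = y_0$ and $z = z_0$, I would invoke the cyclic symmetry of $\Gamma(\delta)$ under permuting $x,y,z$ to promote Observation~\ref{obs_dist_in_face} to its analogues at $y_0$ and $z_0$; in particular each is at distance exactly $\delta$ from $s$ and at distance at most $\delta$ from every vertex of $\Gamma(\delta)$. Only the new vertex $t_x$ must be checked separately. Since $t_x$ is adjacent to $y_{\delta-1}$, which sits at distance $\delta-1$ from $y_0$, we get $d(y_0,t_x) \le (\delta-1)+1 = \delta$, and the symmetric estimate holds for $z_0$ via $z_{\delta-1}$. Assembling these bounds shows that $y_0$ and $z_0$ are at distance exactly $\delta$ from $s$ and at most $\delta$ from every other vertex, as claimed.

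The only step requiring genuine care is the no-shortcut argument of the first paragraph, and even that is short because the neighbourhood of $t_x$ is a triangle; the rest is bookkeeping against Observation~\ref{obs_dist_in_face} and its symmetric versions. I would also flag that the construction presupposes $\delta \ge 1$, and record the small-case check $\delta = 1$ (where $\Gamma(1)$ is $K_4$ and $t_x$ is adjacent to $s,y_0,z_0$) as a sanity test confirming $d(x_0,t_x)=\delta+1$.
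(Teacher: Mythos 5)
Your proof is correct. The paper states this observation without any proof at all, treating it as immediate from the construction, so your argument fills in exactly the details the author left implicit. Your key step --- that the neighbourhood of $t_x$ is the triangle $\{s, y_{\delta-1}, z_{\delta-1}\}$, hence a clique, so adjoining $t_x$ creates no shortcuts and all distances among the old vertices of $\Gamma(\delta)$ are preserved --- is precisely the right reason the bookkeeping against Observation~\ref{obs_dist_in_face} (and its images under the $x,y,z$ symmetry of $\Gamma(\delta)$) goes through, and your separate checks of $d(x_0,t_x)=\delta+1$ and $d(y_0,t_x)\le\delta$ via $y_{\delta-1}$ are accurate.
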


\begin{lem}
	Assume that $q(H[f]) = k$, and that both $(k+1)_y ^{x,z}$ and $(k+1)_z ^{x,y}$ are non-empty. 
	Let $\delta = \alpha-k-1$. 
	If $G_f$ is the graph formed by gluing $\Gamma_x(\delta)$ to $H$ along $f$, then every vertex of $\qcc_H(H[f])$ has eccentricity $\alpha$ in $G$, and no vertex of $H$ has eccentricity greater than $\alpha$ in $G_f$.
	\label{lem_const_2}
\end{lem}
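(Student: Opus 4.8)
The plan is to first determine precisely which of the sets $(k)$ and $(k+1)$ can be non-empty under the hypotheses, and then to read off the two eccentricity claims from the distances recorded in Observation~\ref{obs_dist_in_gam_x}. First I would apply Corollary~\ref{cor_twocnob_rest}, with the roles of $x,y,z$ permuted so that $y$ and $z$ play the part of the two vertices with non-empty ``$(k+1)_\bullet^{\bullet,\bullet}$'' sets; this yields that $(k)_{s,t}$ is empty for every pair of distinct $s,t\in\{x,y,z\}$ and that $(k)_x^{y,z}$ is non-empty. Corollary~\ref{cor_no_three_c_rest} then forces $(k+1)_x^{y,z}$ to be empty, and Lemma~\ref{lem_cforb_rest} makes $(k)_y$ and $(k)_z$ empty, so the only non-empty part of $(k)$ is $(k)_x^{y,z}$. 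Next I would record that $e(H[f])=k+1$: by Lemma~\ref{lem_mpg_qef_diffofone} every quasi-eccentric vertex is at distance $k$ or $k+1$ from $H[f]$, the eccentric vertices of $H[f]$ are among them (Observation~\ref{obs_ecc_is_qcc}), and $(k+1)_y^{x,z}$ is non-empty; hence every vertex of $H$ is within distance $k+1$ of $H[f]$, and every vertex $w$ with $d(w,H[f])=k+1$ is eccentric to $H[f]$, hence quasi-eccentric. Since $(k+1)_x^{y,z}$ is empty, such a $w$ must satisfy $\min\{d(w,y),d(w,z)\}=k+1$ --- this is the fact that will do the heavy lifting later.

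With the configuration understood, I would set $\delta=\alpha-k-1$. Since $\alpha\ge\text{diam}(H)\ge e(H[f])=k+1$ we have $\delta\ge 0$; the case $\delta=0$ is degenerate, since then a short check shows each $(k)_x^{y,z}$-vertex and each $(k+1)$-vertex already has eccentricity $k+1=\alpha$ in $H$, so $G_f=H$ suffices, and I may assume $\delta\ge 1$. Gluing $\Gamma_x(\delta)$ along $f$ produces a maximal plane graph $G_f$; because $H$ and $\Gamma_x(\delta)$ are both maximal plane they are isometric in $G_f$ (Lemma~\ref{lem:mpgiso}), and because $\{x,y,z\}$ separates the interior of $f$ from the rest of $H$, for every $w\in V(H)$ and every vertex $p$ of $\Gamma_x(\delta)$ the distance factors as $d_{G_f}(w,p)=\min_{t\in\{x,y,z\}}\bigl(d(w,t)+d_{\Gamma_x(\delta)}(t,p)\bigr)$.

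The two eccentricity claims then reduce to computations with this formula and Observation~\ref{obs_dist_in_gam_x}. For the lower bound I would check that a vertex of $(k)_x^{y,z}$, with distance vector $(k,k+1,k+1)$, lies at distance $\min\{k+(\delta+1),(k+1)+\delta\}=\alpha$ from $t_x$, while every vertex of $(k+1)$ lies at distance $\delta+(k+1)=\alpha$ from $s$; thus every quasi-eccentric vertex attains eccentricity at least $\alpha$. For the upper bound, fix $w\in V(H)$ and a vertex $t^\ast\in\{x,y,z\}$ realising $d(w,H[f])\le k+1$. For any $p\ne t_x$ Observation~\ref{obs_dist_in_gam_x} gives $d_{\Gamma_x(\delta)}(t^\ast,p)\le\delta$, so $d_{G_f}(w,p)\le(k+1)+\delta=\alpha$; distances inside $H$ are at most $\text{diam}(H)\le\alpha$.

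The hard part will be the single vertex $p=t_x$, which is the only vertex of $\Gamma_x(\delta)$ at distance $\delta+1$ from $x$ rather than $\delta$. Routing through $x$ only bounds $d(w,t_x)$ by $d(w,x)+\delta+1$, which can reach $\alpha+1$; this is exactly the obstruction the construction is designed to avoid, and it is where the final observation of the first paragraph is used. If $w$ had $d(w,x)=k+1$ with both $y$ and $z$ strictly farther, then $w$ would lie in $(k+1)_x^{y,z}$, which is empty; hence for every such $w$ at least one of $y,z$ is within distance $k+1$, and routing to $t_x$ through that neighbour gives $d(w,t_x)\le(k+1)+\delta=\alpha$. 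Combining the lower and upper bounds shows every quasi-eccentric vertex has eccentricity exactly $\alpha$ while no vertex of $H$ exceeds $\alpha$, which is the claim.
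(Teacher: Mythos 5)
Your proof is correct and follows essentially the same route as the paper: the same configuration analysis (Corollary \ref{cor_twocnob_rest}, Corollary \ref{cor_no_three_c_rest}, Lemma \ref{lem_cforb_rest}), then the same distance computations from Observation \ref{obs_dist_in_gam_x}, and you usefully make explicit a point the paper leaves implicit, namely that the emptiness of $(k+1)_x^{y,z}$ is exactly what keeps $d(w,t_x)\leq\alpha$ for vertices $w$ at distance $k+1$ from $H[f]$. Your only blemish is the ``degenerate case'' $\delta=0$: it is vacuous, because any vertex of $(k+1)_y^{x,z}$ lies at distance $k+2$ from $x$, so $\alpha\geq\text{diam}(H)\geq k+2$ and hence $\delta\geq 1$ --- this is how the paper dispatches it, and it spares you the substitution $G_f=H$, which does not match the lemma's statement that $G_f$ is the glued graph (indeed $\Gamma_x(0)$ is not even defined).
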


\begin{proof}
	Since the set $(k+1)_y ^{x,z}$ is non-empty, some vertex of $H$ is distance $k+2$ from $x$. 
	Thus $\alpha \geq \text{diam}(H) \geq k+2$, so $\delta = \alpha - k - 1 \geq 1$. 
	By Corollary \ref{cor_twocnob_rest}, the set $(k+1)_x ^{y,z}$ is empty, and $(k)_x ^{y,z}$ is non-empty. 
	Further, by Lemmas \ref{lem_ak_rest} and \ref{lem_same_rest}, all the vertices in $\qcc_H(H[f]) - (k)_{x} ^{y,z}$ are distance $k+1$ from $H[f]$. 
	Since $\ecc_H(H[f])$ is a subset of $\qcc_H(H[f])$, all the vertices of $H-\qcc_H(H[f])$ are within distance $k$ of $H[f]$.
	
	From Observation \ref{obs_dist_in_gam_x}, and the facts listed in the previous paragraph, we can deduce that every vertex of $H$ has eccentricity at most $\alpha$ in $G_f$.
	Every vertex of $(k+1)$ is exactly distance $\alpha$ from $s$, and the vertices of $(k)_x ^{y,z}$ are distance $\alpha$ from $t_x$. 
	Thus every vertex of $\qcc_H(H[f])$ has eccentricity $\alpha$ in $G_f$.
\end{proof}

In much the same way that we constructed $\Gamma_x(\delta)$, we construct $\Gamma_{x,y}(\delta)$ from $\Gamma(\delta)$ by placing a vertex $t_x$ in the face $f_x : s,y_{\delta-1}, z_{\delta-1}$ of $\Gamma(\delta)$, and placing a vertex $t_y$ in the face $f_y : s, x_{\delta-1}, z_{\delta-1}$. Then, we add the three edges incident with each of $t_x$ and $t_y$ needed to ensure that the resulting graph is a maximal plane graph. 
See, for example, the left side of Figure \ref{fig_gamma_2}.

We can also construct $\Gamma_{x,y,z}(\delta)$ in the same way by adding a third vertex $t_z$ in the face $f_z : s, x_{\delta-1}, y_{\delta-1}$. 
See the right side of Figure \ref{fig_gamma_2} for an example of this construction. 
The vertices $t_x$, $t_y$ and $t_z$ are labelled with $\star$ symbols.

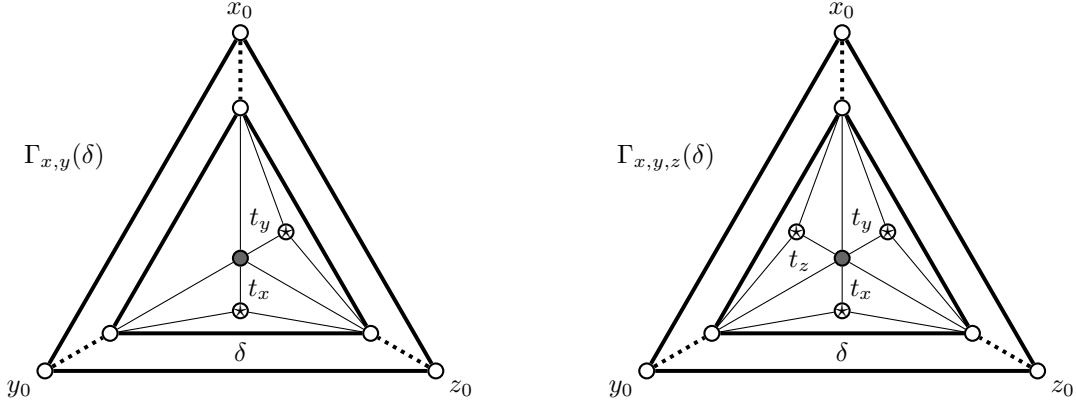
\begin{figure}[h]
	\centering
	\begin{tikzpicture}
	[inner sep=0.7mm, 
	vertex/.style={circle,thick,draw},
	dvertex/.style={rectangle,thick,draw, inner sep=1.3mm}, 
	thickedge/.style={line width=1.5pt}]
	
	\pgfmathsetmacro{\x}{1}
	\pgfmathsetmacro{\y}{0.5}
	
	\node[vertex, fill=black!60] (s) at (0,0) {} ;
	
	\node[vertex] (x0) at (90:3*\x) [label=90:{$x_0$}] {};
	\node[vertex] (y0) at (90+120:3*\x) [label=90+120:{$y_0$}] {};
	\node[vertex] (z0) at (90+240:3*\x) [label=90+240:{$z_0$}] {};
	
	\node[vertex] (x1) at (90:2*\x) {};
	\node[vertex] (y1) at (90+120:2*\x) {};
	\node[vertex] (z1) at (90+240:2*\x) {};
	
	\draw[thickedge] (x0)--(y0)--(z0)--(x0);
	\draw[thickedge] (x1)--(y1)--(z1)--(x1);
	
	\draw (x1)--(s) (y1)--(s) (z1)--(s);
	\draw[thickedge, dotted] (x0)--(x1) (y0)--(y1) (z0)--(z1);
	
	\node at (150+120:0.25 + 2*\y) {$\delta$};
	
	\node[vertex] (tx) at (270:0.7) {};
	\node at (270:0.7) {$\star$};
	\draw (tx)--(s) (tx)--(y1) (tx)--(z1);
	
	\node[vertex] (ty) at (30:0.7) {};
	\node at (30:0.7) {$\star$};
	\draw (ty)--(s) (ty)--(x1) (ty)--(z1);
	
	\node at (290:0.75) [label=:{$t_x$}] {};
	\node at (35:0.33) [label=:{$t_y$}] {};
	
	\node at (150:2.7*\x) {$\Gamma_{x,y}(\delta)$};
	
	\begin{scope}[shift={(8,0)}]
		\node[vertex, fill=black!60] (s) at (0,0) {} ;
		
		\node[vertex] (x0) at (90:3*\x) [label=90:{$x_0$}] {};
		\node[vertex] (y0) at (90+120:3*\x) [label=90+120:{$y_0$}] {};
		\node[vertex] (z0) at (90+240:3*\x) [label=90+240:{$z_0$}] {};
		
		\node[vertex] (x1) at (90:2*\x) {};
		\node[vertex] (y1) at (90+120:2*\x) {};
		\node[vertex] (z1) at (90+240:2*\x) {};
		
		\draw[thickedge] (x0)--(y0)--(z0)--(x0);
		\draw[thickedge] (x1)--(y1)--(z1)--(x1);
		
		\draw (x1)--(s) (y1)--(s) (z1)--(s);
		\draw[thickedge, dotted] (x0)--(x1) (y0)--(y1) (z0)--(z1);
		
		\node at (150+120:0.25 + 2*\y) {$\delta$};
		
		\node[vertex] (tx) at (270:0.7) {};
		\node at (270:0.7) {$\star$};
		\draw (tx)--(s) (tx)--(y1) (tx)--(z1);
		
		\node[vertex] (ty) at (30:0.7) {};
		\node at (30:0.7) {$\star$};
		\draw (ty)--(s) (ty)--(x1) (ty)--(z1);
		
		\node[vertex] (tz) at (150:0.7) {};
		\node at (150:0.7) {$\star$};
		\draw (tz)--(s) (tz)--(x1) (tz)--(y1);
		
		\node at (290:0.75) [label=:{$t_x$}] {};
		\node at (35:0.33) [label=:{$t_y$}] {};
		\node at (210:0.65) [label=:{$t_z$}] {};
		
		\node at (150:2.7*\x) {$\Gamma_{x,y,z}(\delta)$};
	\end{scope}
	
	\end{tikzpicture}
	\caption{Left: the graph $\Gamma_{x,y}(\delta)$. Right: the graph $\Gamma_{x,y,z}(\delta)$. The vertex $s$ is grey, and the vertices $t_x$, $t_y$ and $t_z$ contain star $\star$ symbols.} 
	\label{fig_gamma_2}
\end{figure}

In the spirit of Observations \ref{obs_dist_in_face} and \ref{obs_dist_in_gam_x}, we note that for any vertex $p$ in $\{x,y,z\}$, we have that $d(p,t_p) = \delta + 1$, but $p$ is within distance $\delta$ of every other vertex of $\Gamma_{x,y,z}(\delta)$.

We will need one more type of modification of $\Gamma(\delta)$. 
We want to modify $\Gamma(\delta)$ in such a way as to have some vertex $t_{xy}$ which is distance $\delta$ from $z$ and distance $\delta + 1$ from $x$ and $y$. 
To this end, we first construct $\Gamma_{x,y}(\delta)$. 
To the face $f_{xy} : s,z_{\delta-1}, t_x$, add a vertex $t_{xy}$, and let $t_{xy}$ be adjacent to $s$, $z_{\delta - 1}$ and $t_x$.
Call the resulting graph $\Gamma_{x,y}^{xy}(\delta)$. 
The superscript $xy$ indicates that there exists a vertex distance $\delta+1$ from both $x$ and $y$. 
In a similar fashion, we can construct $\Gamma_{x,y,z}^{xy}(\delta)$, $\Gamma_{x,y,z}^{xy,yz}(\delta)$ and $\Gamma_{x,y,z}^{xy,yz,zx}(\delta)$. 
See Figure \ref{fig_gamma_3} for some of these constructions. 
The vertices $t_{xy}$, $t_{yz}$ and $t_{xz}$ are marked with $\bullet$ symbols.

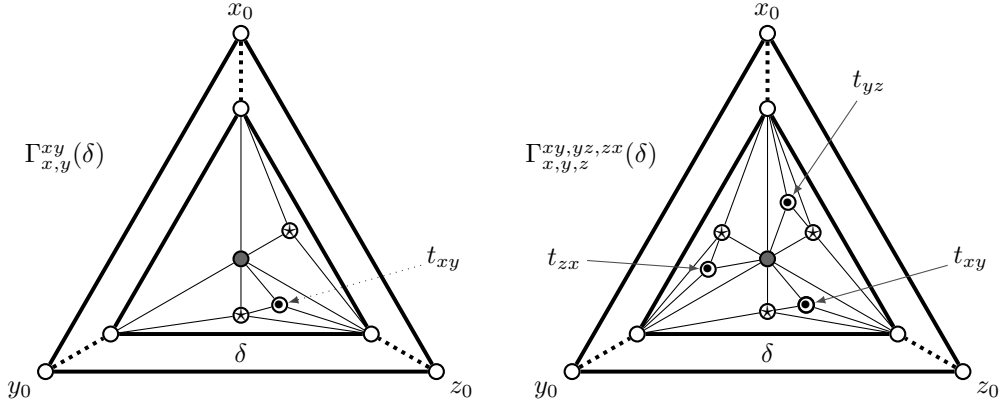
\begin{figure}[h]
	\centering
	\begin{tikzpicture}
	[inner sep=0.7mm, 
	vertex/.style={circle,thick,draw},
	dvertex/.style={rectangle,thick,draw, inner sep=1.3mm}, 
	thickedge/.style={line width=1.5pt}]
	
	\pgfmathsetmacro{\x}{1}
	\pgfmathsetmacro{\y}{0.5}
	
	\node[vertex, fill=black!60] (s) at (0,0) {} ;
	
	\node[vertex] (x0) at (90:3*\x) [label=90:{$x_0$}] {};
	\node[vertex] (y0) at (90+120:3*\x) [label=90+120:{$y_0$}] {};
	\node[vertex] (z0) at (90+240:3*\x) [label=90+240:{$z_0$}] {};
	
	\node[vertex] (x1) at (90:2*\x) {};
	\node[vertex] (y1) at (90+120:2*\x) {};
	\node[vertex] (z1) at (90+240:2*\x) {};
	
	\draw[thickedge] (x0)--(y0)--(z0)--(x0);
	\draw[thickedge] (x1)--(y1)--(z1)--(x1);
	
	\draw (x1)--(s) (y1)--(s) (z1)--(s);
	\draw[thickedge, dotted] (x0)--(x1) (y0)--(y1) (z0)--(z1);
	
	\node at (150+120:0.25 + 2*\y) {$\delta$};
	
	\node[vertex] (tx) at (270:0.75) {};
	\node at (270:0.75) {$\star$};
	\draw (tx)--(s) (tx)--(y1) (tx)--(z1);
	
	\node[vertex] (ty) at (30:0.75) {};
	\node at (30:0.75) {$\star$};
	\draw (ty)--(s) (ty)--(x1) (ty)--(z1);
	
	\node[vertex] (txy) at (310:0.8) {};
	\node at (310:0.8) {$_\bullet$};
	\draw (txy)--(s) (txy)--(z1) (txy)--(tx);
	
	\node at (150:2.7*\x) {$\Gamma_{x,y} ^{xy}(\delta)$};
	
	\node (xylab) at (0:2.7*\x) {$t_{xy}$};
	
	\draw[black!75, dotted, ->] (xylab)--(txy);
	
	
	\begin{scope}[shift={(7,0)}]
		\node[vertex, fill=black!60] (s) at (0,0) {} ;
		
		\node[vertex] (x0) at (90:3*\x) [label=90:{$x_0$}] {};
		\node[vertex] (y0) at (90+120:3*\x) [label=90+120:{$y_0$}] {};
		\node[vertex] (z0) at (90+240:3*\x) [label=90+240:{$z_0$}] {};
		
		\node[vertex] (x1) at (90:2*\x) {};
		\node[vertex] (y1) at (90+120:2*\x) {};
		\node[vertex] (z1) at (90+240:2*\x) {};
		
		\draw[thickedge] (x0)--(y0)--(z0)--(x0);
		\draw[thickedge] (x1)--(y1)--(z1)--(x1);
		
		\draw (x1)--(s) (y1)--(s) (z1)--(s);
		\draw[thickedge, dotted] (x0)--(x1) (y0)--(y1) (z0)--(z1);
		
		\node at (150+120:0.25 + 2*\y) {$\delta$};
		
		\node[vertex] (tx) at (270:0.7) {};
		\node at (270:0.7) {$\star$};
		\draw (tx)--(s) (tx)--(y1) (tx)--(z1);
		
		\node[vertex] (ty) at (30:0.7) {};
		\node at (30:0.7) {$\star$};
		\draw (ty)--(s) (ty)--(x1) (ty)--(z1);
		
		\node[vertex] (tz) at (150:0.7) {};
		\node at (150:0.7) {$\star$};
		\draw (tz)--(s) (tz)--(x1) (tz)--(y1);
		
		\node[vertex] (txy) at (310:0.8) {};
		\node[vertex, inner sep = 0.1] (txy) at (310:0.8) {$_\bullet$};
		\draw (txy)--(s) (txy)--(z1) (txy)--(tx);
		
		
		\node[vertex] (tyz) at (70:0.8) {};
		\node at (70:0.8) {$_\bullet$};
		\draw (tyz)--(s) (tyz)--(x1) (tyz)--(ty);
		
		
		\node[vertex] (tzx) at (190:0.8) {};
		\node at (190:0.8) {$_\bullet$};
		\draw (tzx)--(s) (tzx)--(y1) (tzx)--(tz);
		

		\node (xylab) at (0:2.7*\x) {$t_{xy}$};
		\node (yzlab) at (60:2.7*\x) {$t_{yz}$};
		\node (zxlab) at (180:2.7*\x) {$t_{zx}$};
		
		\draw[black!70, ->] (xylab)--(txy);
		\draw[black!70, ->] (yzlab)--(tyz);
		\draw[black!70, ->] (zxlab)--(tzx);

		\node at (150:2.7*\x) {$\Gamma_{x,y,z} ^{xy,yz,zx}(\delta)$};
	\end{scope}
	
	\end{tikzpicture}
	\caption{Left: the graph $\Gamma_{x,y}^{xy}(\delta)$. Right: the graph $\Gamma_{x,y,z}^{xy,yz,zx}(\delta)$. 
	} 
	\label{fig_gamma_3}
\end{figure}

\begin{obs}[Distances in $\Gamma$ graphs]
	Consider any graph $\Gamma(\delta)$ (with possible subscripts and superscripts) of depth $\delta \geq 1$ that we have constructed thus far, which has vertices $x_0$, $y_0$ and $z_0$ on the boundary of its outermost triangle. 
	The vertex $s$ inside the innermost triangle will be distance $\delta$ from each of $x_0$, $y_0$ and $z_0$. 
	Any vertex $t_p$, where $p$ is in $\{x,y,z\}$, will be distance $\delta+1$ from $p$ and distance $\delta$ from each of the other two vertices in $\{x,y,z\}$. 
	Any vertex $t_{pq}$, where $p$ and $q$ are distinct elements of $\{x,y,z\}$, will be distance $\delta + 1$ from both $p$ and $q$, and distance $\delta$ from the remaining vertex of $\{x,y,z\}$.
	\label{obs_dist_with_modif}
\end{obs}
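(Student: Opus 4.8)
The plan is to observe that every one of the graphs $\Gamma(\delta)$ with subscripts and superscripts is obtained from the base graph $\Gamma(\delta)$ by a sequence of insertions of a new vertex into a triangular face, each new vertex being joined to exactly the three (mutually adjacent) vertices bounding that face. Since Observation \ref{obs_dist_in_face} already pins down the distances from $x_0,y_0,z_0$ to $s$ and to $x_{\delta-1},y_{\delta-1},z_{\delta-1}$ in $\Gamma(\delta)$ --- and, by the threefold symmetry of the construction, the analogous statements hold for $y_0$ and $z_0$ as well --- it suffices to track how these distances evolve under a single such insertion. I would isolate this as a small lemma: \emph{inserting a vertex $t$ into a triangular face of a plane graph $K$, adjacent to precisely the three boundary vertices, leaves all pairwise distances among the vertices of $K$ unchanged, while the new vertex satisfies $d(v,t)=1+\min_{w\in N(t)}d(v,w)$ for every $v\in V(K)$.}

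The crux --- and the only place requiring genuine care --- is the claim that such an insertion creates no shortcuts. Here the point is that the three boundary vertices of the face are mutually adjacent: if some shortest path between two old vertices passed through $t$, it would enter and leave $t$ through two distinct neighbours of $t$, contributing a subpath of length $2$ between two vertices that are already joined by an edge; replacing that subpath by the edge yields a walk that is no longer, contradicting minimality. Hence no distance among old vertices decreases, and the formula $d(v,t)=1+\min_{w\in N(t)}d(v,w)$ is just the standard fact that a geodesic ending at $t$ has its penultimate vertex in $N(t)$. Applying this repeatedly along the construction sequence, the distances from $x_0,y_0,z_0$ supplied by Observation \ref{obs_dist_in_face} survive intact, and each newly inserted $t$-vertex inherits its distances through the formula, using the already-computed distances of its (earlier) neighbours.

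It then remains to evaluate three minima. For $s$, Observation \ref{obs_dist_in_face} and symmetry give $d(x_0,s)=d(y_0,s)=d(z_0,s)=\delta$ directly. For $t_x$, whose neighbours are $s,y_{\delta-1},z_{\delta-1}$, the triples of distances from $x_0,y_0,z_0$ to these neighbours are $(\delta,\delta,\delta)$, $(\delta,\delta-1,\delta)$ and $(\delta,\delta,\delta-1)$ respectively, so the formula yields $d(x,t_x)=\delta+1$ and $d(y,t_x)=d(z,t_x)=\delta$; the cases of $t_y,t_z$ follow by symmetry. Finally, for $t_{xy}$, whose neighbours are $s,z_{\delta-1},t_x$, the relevant triples are $d(x_0,\cdot)=(\delta,\delta,\delta+1)$, $d(y_0,\cdot)=(\delta,\delta,\delta)$ and $d(z_0,\cdot)=(\delta,\delta-1,\delta)$, giving $d(x,t_{xy})=d(y,t_{xy})=\delta+1$ and $d(z,t_{xy})=\delta$, exactly as claimed; the remaining $t_{pq}$ are symmetric. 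This establishes every assertion of the observation.
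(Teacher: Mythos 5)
Your proof is correct. Note that the paper offers no proof of this statement at all --- it is presented as an unproved Observation following the constructions --- so your argument supplies exactly the justification the paper leaves implicit. Your reduction to a single-insertion lemma (adding a vertex inside a triangular face, joined to the three mutually adjacent boundary vertices, never shortens distances among old vertices, and the new vertex's distances are $1+\min$ over its neighbours) is the natural formalization; the shortcut-removal trick you use is the same one the paper itself employs in the proof of Lemma \ref{lem:mpgiso}, and your three case computations from Observation \ref{obs_dist_in_face} and the threefold symmetry are all accurate, including the correct identification of the neighbourhoods $N(t_x)=\{s,y_{\delta-1},z_{\delta-1}\}$ and $N(t_{xy})=\{s,z_{\delta-1},t_x\}$.
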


In the proof of the main Theorem (Theorem \ref{thm_mpg_qef_crit}), we will construct, for each face $f$ of $H$, a plane supergraph $G_f$ of $H$ such that the quasi-eccentric vertices of $H[f]$ have eccentricity $\alpha$ in $G_f$. 
In order to construct a plane supergraph $G$ of $H$ in which every vertex of $H$ has eccentricity $\alpha$, we will take the union of all the graphs $G_f$, where $f$ is a face of $H$. 
Theorem \ref{thm_glueing_graph_lem} demonstrates that this union has the properties we desire.

\begin{thm}
	Let $\{G_m = (V_m, E_m) : m\in \{1,\dots, n\} \}$ be a finite collection of connected graphs such that for any $m$ and $l$ in $\{1,\dots,n \}$, the pairwise intersection $V_m \cap V_l$ is the same nonempty set $S$ (i.e., the set $S$ does not depend on the choice of integers $m$ and $l$). 
	Define the graphs $G=\bigcup\{G_m : m\in \{1,\dots, n\} \}$ and $H =\bigcap\{G_m : m\in \{1,\dots, n\} \}$ (Note that $V(H) = S$). 
	If both of the following conditions hold:
	\begin{itemize}
		\item[(1)] The graph $H$ is connected, and
		\item[(2)] $H$ is an isometric subgraph of $G_m$ for all $m$ in $\{1,\dots,n \}$,
	\end{itemize}
	then each $G_m$ is isometric in $G$, and every vertex $u$ in $H$ satisfies
	${e_G(u) = \max\{e_{G_m}(u) : m \in \{1,\dots,n\} \}}.$
	\label{thm_glueing_graph_lem}
\end{thm}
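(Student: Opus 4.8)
The plan is to exploit the rigid way the $G_m$ are glued: since $V_m\cap V_l=S$ for all $m\neq l$, the ``private'' vertices $V_m\setminus S$ of one graph are invisible to every other. First I would record the resulting \emph{separation property}. If $v\in V_m\setminus S$, then $v$ lies in no $V_l$ with $l\neq m$ (otherwise $v\in V_m\cap V_l=S$), so every edge of $E(G)=\bigcup_k E_k$ incident with $v$ must lie in $E_m$, and hence all neighbours of $v$ in $G$ lie in $V_m$. Iterating this, along any walk in $G$ a maximal subwalk whose internal vertices all avoid $V_m$ is confined to a single region $V_l\setminus S$ for one fixed $l\neq m$, and it can enter and leave that region only at vertices of $S$. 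I would also note that $G$ is connected, being a union of connected graphs all containing the nonempty common set $S$, so every eccentricity below is finite.

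The heart of the argument is showing each $G_m$ is isometric in $G$. The bound $d_G(u,v)\le d_{G_m}(u,v)$ is immediate from $G_m\subseteq G$. For the reverse, I would take a $u-v$ geodesic $P$ in $G$ with $u,v\in V_m$ and rewrite it into a $u-v$ walk lying entirely in $G_m$ of no greater length. Decompose $P$ into its maximal excursions outside $V_m$; by the separation property each excursion is a path inside some single $G_l$ joining two vertices $s,s'\in S$, so its length is at least $d_{G_l}(s,s')$. Here the two hypotheses chain together through the common subgraph $H$: isometry of $H$ in $G_l$ gives $d_{G_l}(s,s')=d_H(s,s')$, and isometry of $H$ in $G_m$ gives $d_H(s,s')=d_{G_m}(s,s')$, with connectedness of $H$ ensuring these are finite. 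Thus each excursion may be replaced by an $s-s'$ geodesic of $G_m$ of no greater length. Replacing every excursion yields a $u-v$ walk in $G_m$ of length at most $\ell(P)=d_G(u,v)$, so $d_{G_m}(u,v)\le d_G(u,v)$ and equality holds.

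With isometry established, the eccentricity formula is routine. For $u\in S$ and any $w\in V(G)$, pick $m$ with $w\in V_m$; since $u\in S\subseteq V_m$ and $G_m$ is isometric in $G$, we have $d_G(u,w)=d_{G_m}(u,w)\le e_{G_m}(u)$, whence $e_G(u)\le\max_m e_{G_m}(u)$. Conversely, a vertex of $V_m$ realising $e_{G_m}(u)$ is at the same distance from $u$ in $G$ by isometry, so $e_G(u)\ge e_{G_m}(u)$ for every $m$. Combining the two bounds gives the claimed equality.

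The step I expect to be the main obstacle is the excursion-replacement: one must verify carefully that each maximal excursion truly stays within one region $V_l\setminus S$ (rather than hopping between several private parts) and that its two endpoints lie in $S$ rather than merely in $V_m$. This is precisely what the separation property secures, and it is also what makes the two isometry hypotheses combine correctly --- the shared endpoints $s,s'$ live in $V(H)=S$, so the distance computed inside $G_l$ can be transported through $H$ into $G_m$. Once that bookkeeping is pinned down, the rest is a short length comparison.
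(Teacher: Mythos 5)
Your proof is correct and follows essentially the same route as the paper's: the same separation property (no edge of $G$ joins private vertices of distinct $G_m$'s), the same decomposition of a $G$-geodesic into segments confined to a single $G_l$ with endpoints in $S$, the same rerouting of those segments via the isometry of $H$ (the paper reroutes through $H$ itself, you through a $G_m$-geodesic --- an immaterial difference), and the same two-sided comparison for the eccentricity formula. The only detail you leave implicit is that the non-excursion edges of $P$ (those joining two vertices of $V_m$) genuinely lie in $E_m$: if both endpoints lie in $S$, such an edge could a priori belong only to some $E_l$ with $l\neq m$, but then isometry of $H$ in $G_l$ forces it to be an edge of $H$, hence of $G_m$, so the patch is one line.
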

\begin{proof}
	We begin by proving that $G_1$ is an isometric subgraph of $G$. 
	Assume that $\{G_m : m\in \{1,\dots, n\} \}$ satisfies the two conditions of the theorem, and let $u$ and $v$ be vertices of $G_1$. 
	By assumption, the set $V(H) = S$ is non-empty.
	Since each graph $G_m$ is connected, and the intersection $H$ of the family of graphs is non-empty, the graph $G$ is connected. 
	Let $P$ be a $u-v$ geodesic in $G$. 
	In order to prove that $G_1$ is isometric in $G$, it suffices to prove that there exists a path $P'$ in $G_1$ such that $\ell(P')\leq \ell(P)$. 
	If $P$ itself is contained in $G_1$, then we set $P'=P$. 
	We thus assume without loss of generality that $P$ is not contained in $G_1$. 
	Observe that if $xy$ is an edge of $G$, then it is not possible that $x$ is contained in $G_m-H$ and $y$ is contained in $G_l-H$ for distinct integers $m$ and $l$. 
	Thus we can partition the edges of $P$ into a number of smaller paths $Q_0, Q_1, \dots , Q_i$ such that $P = Q_0\cup Q_1\cup \dots \cup Q_i$ satisfies the following:
	\begin{itemize}
		\item[(1)] the path $Q_0$ starts at $u$ and ends in $H$, and is contained in $G_1$,
		\item[(2)] the path $Q_i$ starts in $H$ and ends at $v$, and is contained in $G_1$,
		\item[(3)] for each integer $j$ in $\{1, 2, \dots, i-1 \}$, the path $Q_j$ starts and ends in $H$, and is contained entirely in some graph $G_{m_j}$.
	\end{itemize}
	Since for each $j$ in $\{1, \dots , i-1 \}$ the path $Q_j$ starts and ends in $H$, and $H$ is isometric in each $G_m$, there exists some path $R_j$ in $H$ with the same starting and ending vertices as $Q_j$ such that $\ell(R_j)\leq \ell(Q_j)$. 
	We can thus let $P' = Q_0\cup R_1\cup \dots R_{i-1}\cup Q_i$, completing the proof that $G_1$ is isometric in $G$. 
	In the same way, every graph $G_m$ is also isometric in $G$.
	
	We now prove that the eccentricity in $G$ of a vertex $u$ in $H$ is the maximum of the eccentricities $e_{G_1}(u)$, $e_{G_2}(u)$, \dots, $e_{G_n}(u)$. 
	Assume that $\{G_m : m\in \{1,\dots, n\} \}$ satisfies the conditions of the theorem, and let $u$ be a vertex of $H$.
	If $v$ is a eccentric vertex of $u$ in $G$, then since $V(G)=\bigcup\{V_m : m\in \{1,\dots, n\} \}$, the vertex $v$ belongs to $G_m$ for some integer $m$ in $\{1,\dots, n \}$. 
	Any $u-v$ path in $G_m$ is itself a $u-v$ path in $G$, so $d_G(u,v)\leq d_{G_m}(u,v)$. 
	Consequently,
	\[
	e_G(u) \leq \max\{ e_{G_m}(u) : m\in \{1,\dots, n\} \}.
	\]
	For each integer $m$ in $\{1,\dots,n \}$, let $v_m$ be an eccentric vertex of $u$ in $G_m$. 
	Because each $G_m$ is isometric in $G$, we have that $d_G(u,v_m)=d_{G_m}(u,v_m)$, and so
	\[
	e_G(u) \geq \max\{ e_{G_m}(u) : m\in \{1,\dots, n\} \},
	\] 
	completing the proof.
\end{proof}

\begin{thm}[The quasi-eccentric face criterion in maximal planar graphs]
	Let $H$ be a maximal plane graph of diameter $d$, and let $\alpha \geq d$ be an integer. 
	The graph $H$ is a subgraph of some maximal plane graph $G$, such that $V(H)$ is a subset of $\fe_G(\alpha)$, if and only if for all $u$ in $H$ that satisfy $e_H(u)< \alpha$, there exists a face $f$ of $H$ such that $u$ is in $\qcc_H(H[f])$.
	\label{thm_mpg_qef_crit}
\end{thm}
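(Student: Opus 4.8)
The necessity of the condition is already established: it is exactly Corollary \ref{cor_qef_mpg_nec}, since any maximal plane supergraph $G$ of $H$ contains $H$ as an isometric subgraph by Lemma \ref{lem:mpgiso}. The whole of the work therefore lies in sufficiency, and my plan is to carry out the programme announced in the outline, namely to build one maximal plane supergraph $G_f$ per face $f$ of $H$ satisfying properties (1)--(4), and then to amalgamate them via Theorem \ref{thm_glueing_graph_lem}.

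I would dispatch the amalgamation first, as it is essentially bookkeeping once the $G_f$ are in hand. Set $G = \bigcup_f G_f$, the union taken over all faces $f$ of $H$. Because every vertex and edge of $G_f$ outside $H$ lies in the open face $f$ (property (2)), distinct $G_f$ meet in exactly $V(H)$, the common intersection of the family is $H$, and $H$ is connected and isometric in each $G_f$ by Lemma \ref{lem:mpgiso}. Theorem \ref{thm_glueing_graph_lem} then gives $e_G(u) = \max_f e_{G_f}(u)$ for every $u \in V(H)$. If $e_H(u) < \alpha$, the hypothesis supplies a face $f$ with $u \in \qcc_H(H[f])$, so $e_{G_f}(u) = \alpha$ by property (3), while property (4) keeps $e_{G_{f'}}(u) \leq \alpha$ for all $f'$; hence $e_G(u) = \alpha$. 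If instead $e_H(u) = \alpha$, which is possible only when $\alpha = d$, then isometry and property (4) pin $e_{G_f}(u)$ to $\alpha$ for every $f$, so again $e_G(u) = \alpha$. Thus $V(H) \subseteq \fe_G(\alpha)$, and $G$ is maximal plane because each triangular face of $H$ has been internally triangulated.

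The substance of the proof, and the main obstacle, is the construction of $G_f$ for an arbitrary configuration of $\qcc_H(H[f])$. Writing $k = q(H[f])$, Lemma \ref{lem_mpg_qef_diffofone} confines every quasi-eccentric vertex to the layer $(k)$ or $(k+1)$. When $(k+1)$ is empty, gluing $\Gamma(\alpha-k)$ works by Lemma \ref{lem_const_1} (or $G_f = H$ if $k = \alpha$), while Lemma \ref{lem_const_2} treats one representative case in which $(k+1)$ is non-empty. In general, when $(k+1)$ is non-empty I would glue, at depth $\delta = \alpha - k - 1$, a copy of $\Gamma(\delta)$ decorated by exactly those modifier vertices demanded by the $(k)$-types present. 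By Observation \ref{obs_dist_with_modif}, and because $H[f]$ separates the interior of $f$ from the rest of $H$ so that distances from an interior vertex decompose through $\{x,y,z\}$, the central vertex $s$ lies at distance $\alpha$ from every vertex of $(k+1)$, a modifier $t_p$ witnesses eccentricity $\alpha$ for every vertex of $(k)_p^{q,r}$, and a modifier $t_{pq}$ does the same for $(k)_{p,q}^r$.

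The delicate point is property (4): a modifier could in principle drive some vertex of $H$ to distance $\alpha+1$, and this is precisely where the configuration lemmas pay off. A short distance-vector computation shows that $t_p$ endangers only the layer $(k+1)_p^{q,r}$, and $t_{pq}$ only the layers $(k+1)_p^{q,r}$, $(k+1)_q^{p,r}$ and $(k+1)_{p,q}^r$. But a modifier $t_p$ is introduced only when $(k)_p^{q,r}$, and hence $(k)_p$, is non-empty, so Lemma \ref{lem_cforb_rest} empties $(k+1)_p^{q,r}$; and $t_{pq}$ is introduced only when $(k)_{p,q}^r$ is non-empty, whence $(k)_p$ and $(k)_q$ are both non-empty and Lemma \ref{lem_cforb_rest} empties $(k+1)_p^{q,r}$ and $(k+1)_q^{p,r}$ while Lemma \ref{lem_same_rest} empties $(k+1)_{p,q}^r$. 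Every endangering layer of each modifier we are forced to add is thus empty, so no vertex of $H$ exceeds $\alpha$ and $G_f$ satisfies (1)--(4). I expect the bulk of the remaining write-up to be the orderly enumeration of which decorated graph among $\Gamma_x, \Gamma_{x,y}, \Gamma_{x,y,z}, \Gamma_{x,y}^{xy}, \dots$ realises each admissible configuration, together with the check that $\delta \geq 1$ whenever a modifier is needed; the sole degenerate case $\delta = 0$ forces $\alpha = k+1$ and, since a non-empty $(k+1)$ then excludes $(k)_{x,y,z}$ by Lemma \ref{lem_ak_rest}, makes every quasi-eccentric vertex already eccentric in $H$, so that $G_f = H$ suffices.
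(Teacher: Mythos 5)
Your proposal is correct, and its skeleton is the same as the paper's: necessity via Theorem \ref{thm_qef_criterion} (i.e.\ Corollary \ref{cor_qef_mpg_nec}), sufficiency by gluing a decorated $\Gamma(\delta)$ gadget into each face, and amalgamation via Theorem \ref{thm_glueing_graph_lem} (your handling of the case $e_H(u)=\alpha$ by isometry plus property (4) is in fact cleaner than the paper's closing sentence). Where you genuinely diverge is in how the per-face construction is verified. The paper enumerates, up to relabelling, all configurations of $\qcc_H(H[f])$ into Cases 1--6 (Corollary \ref{cor_no_three_c_rest} eliminating one), and assigns a specific named gadget to each: $\Gamma(\delta)$, $\Gamma_z(\delta)$, $\Gamma_{y,z}(\delta)$, $\Gamma_{y,z}^{yz}(\delta)$, $\Gamma_{x,y,z}(\delta)$, and so on. You replace this enumeration by a uniform rule --- attach exactly the modifiers $t_p$, $t_{pq}$ demanded by the non-empty $(k)$-types --- plus a general ``endangerment'' computation: $t_p$ can push only $(k+1)_p^{q,r}$ beyond $\alpha$, and $t_{pq}$ only $(k+1)_p^{q,r}$, $(k+1)_q^{p,r}$ and $(k+1)_{p,q}^r$, and Lemmas \ref{lem_cforb_rest} and \ref{lem_same_rest} empty precisely these layers whenever the corresponding modifier is forced. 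This single argument subsumes the paper's case analysis and explains \emph{why} it works; what the paper's version buys instead is that every admissible configuration is checked against a concrete, drawable graph without needing any general claim about which layers a modifier can damage.

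One step you must make explicit, which the paper does state (inside Lemmas \ref{lem_const_1}, \ref{lem_const_2} and Case 4.1): your endangerment argument empties only \emph{quasi-eccentric} layers, but property (4) quantifies over all of $V(H)$, so you need to rule out a non-quasi-eccentric vertex of $H$ carrying a dangerous distance vector. This follows in one line from Observation \ref{obs_ecc_is_qcc} together with Lemma \ref{lem_mpg_qef_diffofone}: since $\ecc_H(H[f]) \subseteq \qcc_H(H[f])$ and quasi-eccentric vertices lie within distance $k+1$ of $H[f]$, every vertex of $H - \qcc_H(H[f])$ lies within distance $k$ of $H[f]$, hence within $k+\delta+1 = \alpha$ of every vertex of the gadget; and every vertex of $H$ lies within distance $k+1$ of $H[f]$, hence within $\alpha$ of $s$ and of the undecorated body of $\Gamma(\delta)$. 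So the only vertices a modifier can endanger do lie in the layers your analysis empties, and with that line added your write-up is complete.
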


\begin{proof}
	The necessity of the condition follows from Theorem \ref{thm_qef_criterion}. 
	It only remains to prove sufficiency. 
	Assume that for all vertices $u$ of $H$ that have eccentricity less than $\alpha$ in $H$, there exists a face $f$ of $H$ such that $u$ is in $\qcc_H(H[f])$. 
	
	We will construct the graph $G$.
	For each face $f$ of $H$, we will create a plane supergraph $G_f$ of $H$ using one of the $\Gamma$ constructions described in this section, and glueing the $\Gamma$ graph to $H$ along the boundary of $f$. 
	The graph $G_f$ will be chosen such that each vertex of $\qcc_H(H[f])$ will have eccentricity exactly $\alpha$ in $G_f$, and each vertex of $H$ will have eccentricity at most $\alpha$ in $G_f$. 
	Further, given any two faces $f_1$ and $f_2$ of $H$, the intersection $G_{f_1}\cap G_{f_2}$ will be the graph $H$, which is isometric in any plane graph which contains it. 
	By Theorem \ref{thm_glueing_graph_lem}, setting $G=\bigcup\{G_f : f \text{ a face of } H \}$ will complete the proof.
	As such, all that we need to do now is to construct the graphs $G_f$, where $f$ is a face of $H$.
	
	Let $f:x,y,z$ be a face of $H$, and let $q(H[f]) = k$. 
	By Lemma \ref{lem_mpg_qef_diffofone}, every vertex of $\qcc_H(H[f])$ is either distance $k$ from $H[f]$, or distance $k+1$ from $H[f]$. 
	Up to relabelling of the vertices $x$, $y$ and $z$, the following list describes all possible configurations of $\qcc_H(H[f])$:
	
	\begin{itemize}[leftmargin=50pt]
		\item[Case 1:] Every vertex of $\qcc_H(H[f])$ is distance $k$ from $H[f]$. 
		That is to say, the sets $\qcc_H(H[f])$ and $(k)$ are equal.
		\item[Case 2:] The sets $(k+1)_x ^{y,z}$, $(k+1)_y ^{x,z}$ and $(k+1)_z ^{x,y}$ are all non-empty.
		\item[Case 3:] The sets $(k+1)_x ^{y,z}$ and $(k+1)_y ^{x,z}$ are non-empty, but $(k+1)_z ^{x,y}$ is empty.
		\item[Case 4.1:] The set $(k+1)_x ^{y,z}$ is non-empty, and the sets $(k+1)_y ^{x,z}$, $(k+1)_z ^{x,y}$ and $(k)^{x} _{y,z}$ are empty.
		\item[Case 4.2:] The sets $(k+1)_x ^{y,z}$ and $(k)_{y,z} ^x$ are non-empty, but the sets $(k+1)_y ^{x,z}$ and $(k+1)_z ^{x,y}$ are empty.
		\item[Case 5.1:] The sets $(k+1)_x ^{y,z}$, $(k+1)_y ^{x,z}$ and $(k+1)_z ^{x,y}$ are all empty.
		However, each of the sets $(k+1)_{x,y} ^z$, $(k+1)_{x,z} ^y$ and $(k+1)_{z,y} ^x$ is non-empty.
		\item[Case 5.2:] The sets $(k+1)_x ^{y,z}$, $(k+1)_y ^{x,z}$ and $(k+1)_z ^{x,y}$ are all empty. 
		Further, the sets $(k+1)_{x,y} ^z$ and $(k+1)_{x,z} ^y$ are non-empty, but the set $(k+1)_{z,y} ^x$ is empty.
		\item[Case 5.3:] The sets $(k+1)_x ^{y,z}$, $(k+1)_y ^{x,z}$ and $(k+1)_z ^{x,y}$ are all empty. 
		On the other hand, the set $(k+1)_{x,y} ^z$ is non-empty, but the sets $(k+1)_{z,y} ^x$ and $(k+1)_{x,z} ^y$ are empty.
		\item[Case 6:] The set $(k+1)_{x,y,z}$ is non-empty. 
		Further, every vertex of $\qcc_H(H[f])$ at distance $k+1$ from $H[f]$ is contained in the set $(k+1)_{x,y,z}$.
	\end{itemize}
	
	We show that for each configuration in the list presented above, we can construct the maximal plane graph $G_f$ by placing another maximal plane graph $\Gamma_f$ inside the face $f$, and glueing $\Gamma_f$ to $H$ along $f$.
	If $f_1$ and $f_2$ are two distinct faces of $H$, it is clear that $H$ will be a subgraph of both $G_{f_1}$ and $G_{f_2}$.
	Since $H$ is a maximal plane graph, it will be an isometric subgraph of both $G_{f_1}$ and $G_{f_2}$, per Lemma \ref{lem:mpgiso}.
	Because the graphs $\Gamma_{f_1}$ and $\Gamma_{f_2}$ lie in different faces of $H$, the intersection $V(G_{f_1})\cap V(G_{f_2})$ is exactly the set $V(H)$.
	
	\textit{Case 1:}\\
	If all vertices of $\qcc_H(H[f])$ are distance $k$ from $H[f]$, then define $\delta = \alpha - k$. If $\delta = 0$, then $k = \alpha$ and every vertex of $\qcc_H(H[f])$ is distance $k = \alpha$ from $H[f]$. 
	Thus every vertex of $\qcc_H(H[f])$ has eccentricity $\alpha$ in $H$, so choose $\Gamma_f = H$. If $\delta > 0$, then by Lemma \ref{lem_const_1}, we can choose $\Gamma_f = \Gamma(\delta)$.
	
	\textit{Case 2:}\\
	By Corollary \ref{cor_no_three_c_rest}, this case is not possible.
	
	\textit{Case 3:}\\
	Let $\delta = \alpha - k - 1$. 
	Since the set $(k+1)^{z}$ is non-empty, there exists a vertex in $H$ which is distance $k+2$ from $y$. 
	Thus $\alpha \geq \text{diam}(H) \geq k+2$, and so $\delta \geq 1$. 
	By Lemma \ref{lem_const_2}, We can choose $\Gamma_f = \Gamma_{z}(\delta)$. 
	
	\textit{Case 4.1:}\\
	Let $\delta = \alpha - k - 1$ and $\Gamma_f = \Gamma_{y,z}(\delta)$. 
	By the same reasoning used in Case 3, we have $\delta \geq 1$. 
	Vertices in the sets $(k)_y ^{x,z}$ and $(k)_z ^{x,y}$ are distance $\alpha$ from the vertices $t_y$ and $t_z$ respectively. 
	By Lemmas \ref{lem_ak_rest}, \ref{lem_same_rest} and \ref{lem_cforb_rest}, all the vertices in $\qcc_H(H[f]) - ((k)_y ^{x,z}\cup(k)_z ^{x,y})$ must be in the set $(k+1)$, and thus distance $\delta + k + 1 = \alpha$ from $s$. 
	Since the sets $(k+1)_y ^{x,z}$ and $(k+1)_z ^{x,y}$ are empty, every vertex of $\qcc_H(H[f])$ has eccentricity at most $\alpha$ in $G_f$, by Observation \ref{obs_dist_with_modif}. 
	As $\ecc(H[f])$ is a subset of $\qcc_H(H[f])$, we must have that $e(H[f]) = k+1$, so every vertex of $H-\qcc_H(H[f])$ is within distance $k$ of $H[f]$.
	Thus each vertex of $H-\qcc_H(H[f])$ is within distance $\alpha$ of every vertex of $G_f$. 
	As such, every vertex of $H$ has eccentricity at most $\alpha$ in $G_f$ by Observation \ref{obs_dist_with_modif}.
	
	\textit{Case 4.2:}\\
	Let $\delta = \alpha - k -1$ and $\Gamma_f = \Gamma_{y,z}^{yz}(\delta)$. 
	As in Case 3, we have that $\delta \geq 1$. 
	By Observation \ref{obs_dist_with_modif}, the vertices in the set $(k)_{y,z} ^x$ are distance $\alpha$ from the vertex $t_{xy}$, and distance at most $\alpha$ from every other vertex of $G_f$. 
	The rest of this case follows in the same way as Case 4.1.
	
	\textit{Case 5.1:}\\
	Let $\delta = \alpha - k - 1$ (hence $\delta \geq 1$) and $\Gamma_f = \Gamma_{x,y,z}(\delta)$. 
	By Lemmas \ref{lem_ak_rest} and \ref{lem_same_rest}, every vertex in the set $(k)$ is contained in $(k)_x ^{y,z} \cup (k)_y ^{x,z} \cup (k)_z ^{x,y}$. 
	By Lemma \ref{lem_mpg_qef_diffofone}, every vertex of $\qcc_H(H[f]) - (k)$ is in the set $(k+1)$.
	As a consequence of the previous two sentences, every vertex of $\qcc_H(H[f])$ is distance $\alpha$ from some vertex in the set $\{s, t_x, t_y, t_z \}$. 
	By Observation \ref{obs_dist_with_modif}, every vertex of $\qcc_H(H[f])$ is distance at most $\alpha$ from each of the vertices $t_x$, $t_y$ and $t_z$. 
	The argument that every other vertex of $H$ is distance at most $\alpha$ from any vertex of $\Gamma_f$ is the same as the argument used in Case 4.1.
	
	\textit{Case 5.2:}\\
	Let $\delta = \alpha - k - 1$ (and note again that we have $\delta \geq 1$) and $\Gamma_f = \Gamma_{x,y,z}^{yz}(\delta)$. 
	This case is the same as Case 5.1, apart from the fact that the vertices in the set $(k)_{y,z} ^x$ will be distance $\alpha$ from the vertex $t_{yz}$.
	
	\textit{Case 5.3:}\\
	Let $\delta = \alpha - k - 1$ (and observe that $\delta \geq 1$) and $\Gamma_f = \Gamma_{x,y,z}^{yz, zx}(\delta)$. 
	This case is similar to Case 5.2, noting that any vertices in the set $(k)_{x,z} ^y$ will be distance $\alpha$ from the vertex $t_{zx}$.
	
	\textit{Case 6:}\\
	Let $\delta = \alpha - k - 1$.
	If $\delta = 0$, then $\alpha = k+1$. 
	By Lemma \ref{lem_ak_rest}, every vertex of $\qcc_H(H[f])$ is distance $\alpha = k+1$ from some vertex of $H[f]$, so it suffices to let $G_f = H$.
	On the other hand, if $\delta \geq 1$, let $\Gamma_f = \Gamma_{x,y,z}^{xy, yz, zx}(\delta)$. 
	By Observation \ref{obs_dist_with_modif}, every vertex of $\qcc_H(H[f])$ is distance at least $\alpha$ from some vertex in the set $\{s, t_{xy}, t_{yz}, t_{xz} \}$. 
	Note that the only vertices distance $k+1$ from $H[f]$ are those in $(k+1)_{x,y,z}$, and every other vertex of $H$ is distance at most $k$ from $H[f]$.
	Thus every vertex of $\qcc_H(H[f])$ is within distance at most $\alpha$ of each vertex of $\Gamma_f$.
	
	To complete the construction, let $G = \bigcup \{G_f : f \text{ a face of } H \}$.
	It is clear that $G$ is a maximal plane supergraph of $H$. 
	In the case analysis presented above, we have shown that regardless of the configuration of $\qcc_H(H[f])$, the vertices of $\qcc_H(H[f])$ have eccentricity $\alpha$ in $G_f$, and that every vertex of $H$ has eccentricity at most $\alpha$ in $G_f$.
	Thus, by Theorem \ref{thm_glueing_graph_lem}, every vertex which is quasi-eccentric to some face of $H$ has eccentricity $\alpha$ in $G$. 
	By assumption, every vertex of $H$ is quasi-eccentric to some face of $H$. 
	Thus, every vertex of $H$ has eccentricity $\alpha$ in $G$.
\end{proof}

\begin{obs}
	Let $G=(V,E)$ be a connected graph, and $S$ a subset of $V$. Then the following inclusion holds:
	\[
	\{u\in V : u \text{ is an eccentric vertex of some vertex in } S \} \subseteq \qcc(S).
	\]
	\label{obs_all_ecc_are_qcc}
\end{obs}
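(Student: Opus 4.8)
The plan is to verify the defining condition of $\qcc(S)$ directly, using a single well-chosen witness that does not depend on the competing vertex. Suppose $u$ is an eccentric vertex of some vertex $w \in S$, so that by definition $d(u,w) = e(w)$. To show that $u \in \qcc(S)$, I must check that for every vertex $v$ of $G$ there exists some $s \in S$ satisfying $d(u,s) \geq d(v,s)$. The natural candidate is to take $s$ to be $w$ itself.

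The key step is the observation that $w$ serves as a witness simultaneously for all $v$. Fix an arbitrary vertex $v \in V$ and set $s = w$. Since $e(w) = \max\{d(w,x) : x \in V\}$ is the largest distance from $w$ to any vertex of $G$, we have in particular $d(v,w) \leq e(w) = d(u,w)$. Hence
\[
d(u,s) = d(u,w) \geq d(v,w) = d(v,s),
\]
which is precisely the inequality required by the definition of $\qcc(S)$. As $v$ was arbitrary, the single set-element $s = w$ witnesses the condition for every $v$, and therefore $u \in \qcc(S)$, establishing the claimed inclusion.

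I do not expect any genuine obstacle here: the argument is a direct unpacking of the definitions, and the only point worth emphasizing is that the same $s = w$ works uniformly over all $v$, which is exactly what allows ``being eccentric to a vertex of $S$'' to suffice for quasi-eccentricity to $S$. This is the pointwise companion to Observation \ref{obs_ecc_is_qcc}, which records the analogous fact for eccentric vertices of the set $S$ taken as a whole; the proof strategy in both cases is the same, namely exhibiting an explicit $s \in S$ against which $u$ dominates every other vertex.
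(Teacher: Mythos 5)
Your proof is correct: taking $s=w$ (the vertex of $S$ to which $u$ is eccentric) as a uniform witness gives $d(u,w)=e(w)\geq d(v,w)$ for every $v$, which is exactly the definition of membership in $\qcc(S)$. The paper states this as an observation without proof, and your argument is precisely the intended direct unpacking of the definitions, so there is nothing to compare beyond noting agreement.
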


\begin{cor}
	If $H$ is a maximal planar graph in which every vertex is an eccentric vertex, then for any integer $\alpha \geq \text{diam}(H)$, there exists a maximal planar graph $G$ into which $H$ embeds such that $H\subseteq \fe_G(\alpha)$.
	\label{cor_ecc_mpg_suff}
\end{cor}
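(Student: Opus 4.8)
The plan is to obtain this as a direct consequence of Theorem~\ref{thm_mpg_qef_crit}, using Observation~\ref{obs_all_ecc_are_qcc} to translate the hypothesis ``every vertex is eccentric'' into the quasi-eccentric face condition that the theorem requires. Fix an integer $\alpha \geq \text{diam}(H)$. By Theorem~\ref{thm_mpg_qef_crit}, it suffices to verify that every vertex $u$ of $H$ with $e_H(u) < \alpha$ satisfies $u \in \qcc_H(H[f])$ for some face $f$ of $H$; the theorem then supplies a maximal plane supergraph $G$ of $H$ with $V(H) \subseteq \fe_G(\alpha)$.

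To check the condition, I would fix an arbitrary vertex $u$ of $H$ with $e_H(u) < \alpha$. By hypothesis $u$ is an eccentric vertex, so there is a vertex $v$ of $H$ with $d(u,v) = e_H(v)$. Since $H$ is a (connected) maximal plane graph, $v$ lies on the boundary of some triangular face $f$, so $v \in V(H[f])$. Writing $S = V(H[f])$, the vertex $u$ is thus an eccentric vertex of a vertex belonging to $S$, and Observation~\ref{obs_all_ecc_are_qcc} (applied in the connected graph $H$) yields $u \in \qcc(S) = \qcc_H(H[f])$. As $u$ was arbitrary, the quasi-eccentric face criterion holds, and applying Theorem~\ref{thm_mpg_qef_crit} completes the argument.

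I do not expect a genuine obstacle in this proof: its entire content is the elementary fact that an eccentric vertex of $v$ is automatically quasi-eccentric to every face on whose boundary $v$ lies, combined with the fact that in a maximal plane graph each vertex sits on some face. The only point needing a moment's attention is the behaviour of $H$ of very small order: for orders $1$ and $2$ the ``faces'' are not triangles and Theorem~\ref{thm_mpg_qef_crit} does not literally apply, but these graphs are self-centered and the conclusion can be verified directly (indeed $K_1$ and $K_2$ embed into arbitrarily large maximal plane graphs realising any prescribed eccentricity $\alpha$). For all orders at least $3$ the argument above is uniform.
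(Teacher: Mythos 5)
Your proposal is correct and matches the paper's own proof essentially verbatim: both verify the quasi-eccentric face criterion by taking, for each vertex $u$, a vertex $v$ to which $u$ is eccentric, choosing a face $f$ containing $v$ on its boundary, invoking Observation~\ref{obs_all_ecc_are_qcc} to get $u \in \qcc_H(H[f])$, and then applying Theorem~\ref{thm_mpg_qef_crit}. Your added remark about orders $1$ and $2$ is a harmless extra precaution (the paper's framework tacitly assumes order at least $4$).
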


\begin{proof}
	We prove that every vertex of $H$ is quasi-eccentric to some face of $H$. 
	Let $u$ be a vertex of $H$, and $v$ a vertex to which $u$ is eccentric. 
	There exists some face $f$ such that $v$ is on the boundary of $f$, so by Observation \ref{obs_all_ecc_are_qcc}, the vertex $u$ is quasi-eccentric to $H[f]$.
	The result now follows from Theorem \ref{thm_mpg_qef_crit}.
\end{proof}

\begin{cor}
	If $H$ is a maximal planar graph such that every vertex is eccentric to $H[f]$ for some face $f$ of $H$, then for any integer $\alpha \geq \text{diam}(H)$, there exists a maximal planar graph $G$ into which $H$ embeds such that $H\subseteq \fe_G(\alpha)$.
	\label{cor_face_mpg_suff}
\end{cor}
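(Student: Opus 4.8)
The plan is to reduce everything to the sufficiency direction of Theorem \ref{thm_mpg_qef_crit}, which already carries out all of the heavy construction. That theorem produces the desired supergraph $G$ as soon as we can verify its hypothesis: that for every vertex $u$ of $H$ with $e_H(u) < \alpha$, there is a face $f$ of $H$ with $u \in \qcc_H(H[f])$. So the whole task is to translate the hypothesis of the corollary into this quasi-eccentric face condition.

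First I would fix an arbitrary vertex $u$ of $H$. By hypothesis there is a face $f$ of $H$ to which $u$ is eccentric; that is, writing $S = V(H[f])$ for the three vertices bounding $f$, we have $d_H(u, S) = e_H(S)$, so $u$ lies at maximum distance from the vertex set of $H[f]$. This is exactly the situation covered by Observation \ref{obs_ecc_is_qcc}, applied with the set $S$: an eccentric vertex of $S$ is automatically a quasi-eccentric vertex of $S$. Hence $u \in \qcc_H(H[f])$.

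Since $u$ was arbitrary, every vertex of $H$ --- and in particular every vertex of eccentricity strictly less than $\alpha$ --- is quasi-eccentric to some face of $H$. The hypothesis of Theorem \ref{thm_mpg_qef_crit} is therefore met for any integer $\alpha \geq \text{diam}(H)$, and invoking that theorem yields a maximal plane graph $G$ containing $H$ with $V(H) \subseteq \fe_G(\alpha)$, as required.

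There is no genuine obstacle here; the result is a direct corollary, and the only point that warrants care is using the correct flavour of eccentricity. Unlike Corollary \ref{cor_ecc_mpg_suff}, which relies on Observation \ref{obs_all_ecc_are_qcc} (a vertex eccentric to some \emph{vertex} of $S$ is quasi-eccentric to $S$), here we invoke Observation \ref{obs_ecc_is_qcc} with $S$ the entire boundary of $f$ (a vertex eccentric to the \emph{set} $S$ is quasi-eccentric to $S$). I would make explicit that the phrase ``eccentric to $H[f]$'' in the hypothesis is read as eccentric to the vertex set $V(H[f])$, i.e.\ at maximum distance $e_H(V(H[f]))$ from those three vertices, which is precisely the reading Observation \ref{obs_ecc_is_qcc} requires.
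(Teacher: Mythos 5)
Your proposal is correct and follows exactly the paper's own proof: apply Observation \ref{obs_ecc_is_qcc} to the face to which each vertex is eccentric, concluding every vertex is quasi-eccentric to some face, then invoke the sufficiency direction of Theorem \ref{thm_mpg_qef_crit}. Your additional remark distinguishing Observation \ref{obs_ecc_is_qcc} (eccentric to the set) from Observation \ref{obs_all_ecc_are_qcc} (eccentric to a vertex of the set) is a fair clarification but does not change the argument.
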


\begin{proof}
	By Observation \ref{obs_ecc_is_qcc}, every vertex is quasi-eccentric to some face (in particular, the face to which it is eccentric). 
	The result follows from Theorem \ref{thm_mpg_qef_crit}.
\end{proof}

\section{Refinements and corollaries of Theorem \ref{thm_mpg_qef_crit}}

Theorem \ref{thm_mpg_qef_crit} gives a necessary condition for a maximal planar graph $H$ to be a subgraph of the center of some (maximal) planar graph $G$ (as the center of a graph is always an equi-eccentric subgraph), and a sufficient condition for $H$ to be equi-eccentric, with eccentricity $\alpha$, in $G$. 
Theorem \ref{thm_equi_subgraph_center} shows that the construction used in Theorem \ref{thm_mpg_qef_crit} ensures that $\alpha$ is the lowest eccentricity present in $G$, and hence that Theorem \ref{thm_mpg_qef_crit} exactly characterises maximal planar graphs which are contained in the center of some (maximal) planar graph.
But first, we need a lemma.

\begin{lem}
	\label{lem_far_face}
	Let $H$ be a maximal plane graph of order at least 4 such that every vertex of $H$ is quasi-eccentric to some face of $H$. 
	Then every vertex $u$ of $H$ is quasi-eccentric to some face $f$ such that $u$ is not contained in the subgraph $H[f]$ induced by the boundary of $f$.
\end{lem}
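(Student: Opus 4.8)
The plan is to first dispose of the trivial case and then handle the only genuinely problematic situation. By hypothesis, $u$ is quasi-eccentric to some face $g$ of $H$. If $u$ does not lie on the boundary of $g$, there is nothing to prove. So I would assume that every face witnessing the quasi-eccentricity of $u$ contains $u$ on its boundary, and fix one such face $f:u,y,z$ with $u\in\qcc_H(H[f])$.

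The key step is to reinterpret the condition $u\in\qcc_H(H[f])$ as a domination statement about the edge $yz$ opposite $u$. Since $H$ is maximal plane of order at least $3$, the boundary of $f$ is a triangle, so $d(u,y)=d(u,z)=1$. Unpacking the definition of $\qcc$, for each vertex $v$ there must exist $s\in\{u,y,z\}$ with $d(u,s)\ge d(v,s)$. The choice $s=u$ forces $d(v,u)\le 0$, hence is useless except when $v=u$, so the surviving content is that for every $v$ either $d(v,y)\le 1$ or $d(v,z)\le 1$. In other words $\{y,z\}$ dominates $H$, that is, $N[y]\cup N[z]=V(H)$.

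Next I would pass to the face on the other side of the edge $yz$. In a maximal plane graph the edge $yz$ bounds exactly two faces, namely $f:u,y,z$ and a second face $f':w,y,z$. Here the hypothesis that $H$ has order at least $4$ does the crucial work: it guarantees $w\neq u$, since otherwise both faces on $yz$ would have apex $u$ and $H$ would be the single triangle $uyz$. Consequently $u$ is not on the boundary of $f'$. It then remains to verify that $u\in\qcc_H(H[f'])$: for any vertex $v$, the domination property yields $d(v,y)\le 1 = d(u,y)$ or $d(v,z)\le 1 = d(u,z)$, so one of $y$ or $z$ (both of which lie in $\{w,y,z\}$) already serves as the required witness $s$. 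Hence $u$ is quasi-eccentric to $f'$, a face not containing $u$, which is exactly the desired conclusion.

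The main obstacle is not any delicate estimate but rather making sure the two edge cases are handled cleanly: that the witness $s=u$ contributes nothing, so that the quasi-eccentricity condition genuinely reduces to domination by $\{y,z\}$, and that $w\neq u$, which is precisely the point where the order-at-least-$4$ assumption is used. Both are short verifications, and it is the domination reformulation in the second step that makes the passage from $f$ to the adjacent face $f'$ immediate.
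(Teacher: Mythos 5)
Your proposal is correct and follows essentially the same route as the paper's proof: both pass from the face $f:u,y,z$ to the face on the other side of the edge $yz$, and both observe that the witness $s=u$ is only ever needed for $v=u$, so the witnesses in $\{y,z\}$ (equivalently, your domination reformulation) carry over to the adjacent face, whose apex differs from $u$ precisely because $H$ has order at least $4$. The domination rephrasing is a cosmetic repackaging of the same argument, not a different method.
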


\begin{proof}
	By the assumption, there exists a face to which the vertex $u$ is quasi-eccentric. 
	If this face does not contain $u$ in its boundary, we are done.
	Assume that the face to which $u$ is quasi-eccentric does contain $u$ in its boundary. 
	Call this face $f_1:u,y,z$. 
	Since $H$ is a maximal plane graph of order 4 or more, the edge $yz$ lies on two faces, the face $f_1: u, y, z$ and another face $f_2: x, y, z$.
	The vertex $u$ is quasi-eccentric to the vertex set $\{u, y, z\}$ of $H[f_1]$, and $d(u,x) > d(u,u) = 0$, so $u$ is also quasi-eccentric to the set $\{x,y,z\}$.
	Thus $u$ is quasi-eccentric to $H[f_2]$, completing the proof.
\end{proof}

\begin{thm}
	\label{thm_equi_subgraph_center}
	Let $H$ be a maximal plane graph of order at least 4 satisfying the condition of Theorem \ref{thm_mpg_qef_crit}, and let $\alpha \geq \text{diam}(H)$ be an integer.
	Denote by $G$ the graph constructed in Theorem \ref{thm_mpg_qef_crit} such that $V(H)\subseteq \mathcal{E}_G(\alpha)$.
	Then the radius of $G$ is $\alpha$, and so $H$ is contained in the center of $G$.
\end{thm}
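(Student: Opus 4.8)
The plan is to prove that every vertex of $G$ has eccentricity at least $\alpha$. Since the vertices of $H$ already have eccentricity exactly $\alpha$ by the construction in Theorem \ref{thm_mpg_qef_crit} (so that $\text{rad}(G)\le\alpha$), this forces $\text{rad}(G)=\alpha$, whence the center $G[\mathcal{E}_G(\alpha)]$ contains $G[V(H)]=H$ (recall $H$ is an induced, indeed isometric, subgraph of $G$ by Lemma \ref{lem:mpgiso}). The only eccentricities not yet controlled are those of the vertices added inside the faces, i.e. vertices lying in some $\Gamma$-graph $\Gamma_f$ glued into a face $f:x,y,z$. So I fix such an added vertex $w$. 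As $w$ lies strictly inside $f$, it is distinct from $x,y,z$, and I set $j:=d_G(w,\{x,y,z\})\ge 1$, attained at some $t\in\{x,y,z\}$.

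First I would verify the hypothesis needed to invoke Lemma \ref{lem_far_face}, namely that \emph{every} vertex of $H$ is quasi-eccentric to some face. For a vertex with $e_H<\alpha$ this is exactly the condition of Theorem \ref{thm_mpg_qef_crit}. For a vertex $u$ with $e_H(u)=\alpha$ (which forces $\alpha=\text{diam}(H)$), any vertex $v$ with $d_H(u,v)=\alpha$ satisfies $e_H(v)=\alpha$ as well, so $u$ is an eccentric vertex of $v$, and Observation \ref{obs_all_ecc_are_qcc} gives $u\in\qcc_H(H[f'])$ for the face $f'$ containing $v$. Thus Lemma \ref{lem_far_face} applies to $t$, furnishing a face $f'$ with $t\in\qcc_H(H[f'])$ and $t\notin V(H[f'])$; since $t\in V(H[f])$, this guarantees $f'\neq f$.

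The heart of the argument is to produce a vertex far from $w$ that is separated from $w$ by the triangle $\{x,y,z\}$. Because $t\in\qcc_H(H[f'])$, the construction ensures $e_{G_{f'}}(t)=\alpha$; let $v$ satisfy $d_{G_{f'}}(t,v)=\alpha$, so that $d_G(t,v)=\alpha$ since $G_{f'}$ is isometric in $G$ (Theorem \ref{thm_glueing_graph_lem}). I claim $v$ is not one of the vertices added inside $f$: if $v\in V(H)$ then $v\notin\{x,y,z\}$ (these lie within distance $1<\alpha$ of $t$), and if $v$ is an added vertex it lies inside $f'\neq f$. In either case $v$ lies outside the open region bounded by the $3$-cycle $xyz$, which the separator $\{x,y,z\}$ cuts off from $w$. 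Hence every path from $w$ to $v$ meets $\{x,y,z\}$, giving
\[
d_G(w,v)=\min_{t'\in\{x,y,z\}}\big(d_G(w,t')+d_G(t',v)\big).
\]
For $t'=t$ the term is $j+\alpha\ge\alpha$; for $t'\neq t$ we combine $d_G(w,t')\ge 1$ with $d_G(t',v)\ge d_G(t,v)-1=\alpha-1$ (as $tt'$ is an edge) to get a value at least $\alpha$. Therefore $d_G(w,v)\ge\alpha$, so $e_G(w)\ge\alpha$.

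This establishes $e_G\ge\alpha$ for every vertex, so $\text{rad}(G)=\alpha$ and $H\subseteq G[\mathcal{E}_G(\alpha)]$, which is the center. The step I expect to be the genuine obstacle is guaranteeing that the witnessing far vertex $v$ lies outside $\Gamma_f$: a naively chosen eccentric vertex of $t$ in $G$ could sit \emph{inside} $\Gamma_f$ (precisely when $q(H[f])$ is small, so that the deepest vertex of $\Gamma_f$ is already distance $\alpha$ from $t$), and then the displayed separation identity would be unavailable. Lemma \ref{lem_far_face} is exactly the device that removes this difficulty, by letting me route $t$'s eccentricity through a face $f'\neq f$. The degenerate case $\alpha=1$ (where $H=K_4$, the construction adds nothing, and $G=H$ is self-centered) is immediate.
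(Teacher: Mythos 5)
Your proof is correct and follows essentially the same route as the paper's: reduce to showing that each vertex $w$ lying inside a face $f:x,y,z$ has eccentricity at least $\alpha$, invoke Lemma \ref{lem_far_face} to obtain a vertex $v$ at distance $\alpha$ from a boundary vertex of $f$ with $v$ lying outside $f$, and use the fact that $\{x,y,z\}$ separates $w$ from $v$ to conclude $d_G(w,v)\geq 1+(\alpha-1)=\alpha$. The only differences are cosmetic: you explicitly patch the application of Lemma \ref{lem_far_face} to vertices with $e_H(u)=\alpha$ (via Observation \ref{obs_all_ecc_are_qcc}) and handle the case where the far vertex $v$ lies in $H$ itself, two points the paper's proof leaves implicit.
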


\begin{proof}
	Let $H$ be a maximal plane graph satisfying the hypotheses of the theorem. 
	We will need the following claim. 
	
	\textit{Claim:} If $u$ is a vertex of $H$, there is a vertex $v$ of $G-H$ that is not contained in any face of $H$ that has $u$ in its boundary, and such that $d(u,v) = \alpha$.
	
	\textit{Proof of Claim:} By construction of $G$, any face $f$ of $H$ such that $u$ is in $\qcc_H(H[f])$ contains an eccentric vertex of $u$. 
	Thus it suffices to find a face $f:x,y,z$ such that $u$ is in $\qcc_H(H[f])$, but for which $u$ is not in $\{x,y,z\}$.
	By Lemma \ref{lem_far_face}, the desired face $f$ exists, proving the claim.
	
	We now prove the Theorem. 
	Since every vertex of $H$ has eccentricity $\alpha$ in $G$, it suffices to prove that any vertex of $G-H$ has eccentricity at least $\alpha$ in $G$.
	Let $w$ be a vertex of $G-H$, and let $f:x,y,z$ be the face of $H$ containing $w$.
	By the prior claim, there is some vertex $v$ which does not lie inside the face $f$, such that $d(x,v) = \alpha$.
	Since $H[f]$ separates $w$ and $v$, by Lemma \ref{lem_sep_different_components} any $w-v$ path $P$ has length:
	\[
	\ell(P) \geq d(w, H[f]) + d(v,H[f]) \geq 1 + (\alpha - 1) = \alpha
	\]
	Note that the distance $d(v,H[f]) \geq (\alpha - 1)$ since $d(x,v) = \alpha$ and $x$ is adjacent to every vertex of $H[f]$.
	Since $d(w,v) \geq \alpha$, we have $e(w)\geq \alpha$ in $G$, completing the proof.
\end{proof}

We can refine Theorems \ref{thm_mpg_qef_crit} and \ref{thm_equi_subgraph_center} further. 
Given a maximal planar graph $H$ which satisfies the hypotheses of Theorem \ref{thm_mpg_qef_crit}, we can find a planar (but not maximal planar) graph $PG$ such that $H$ is exactly the center of $PG$ (rather than just being contained in the center, as we obtain in Theorem \ref{thm_equi_subgraph_center}).

To do so, we need only make two small modifications to the proof of Theorem \ref{thm_mpg_qef_crit}, the latter of which was suggested by Dankelmann \cite{dankelmann:centerpc}.
First, we insist that the integer $\alpha$ satisfies $\alpha \geq \text{diam(H)} + 3$.
Note that this forces any occurrence of the integer $\delta$ in the proof of Theorem \ref{thm_mpg_qef_crit} to satisfy $\delta \geq 2$. 
Second, find all the vertices of $G-H$ which are adjacent to exactly two vertices of $H$. 
Remove these vertices from the maximal planar graph $G$ to obtain the planar graph $PG$
(See Figure \ref{fig_planar_mod}).

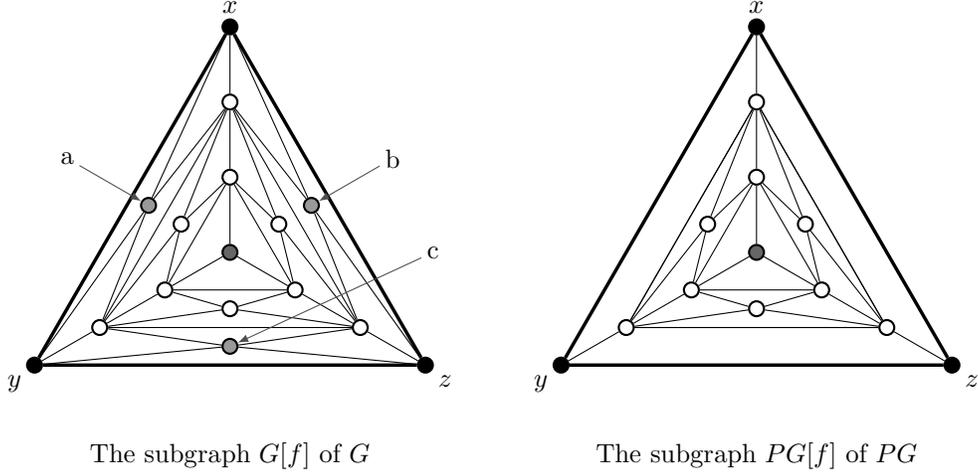
\begin{figure}[h]
	\centering
	\begin{tikzpicture}
		[inner sep=0.7mm, 
		vertex/.style={circle,thick,draw},
		dvertex/.style={rectangle,thick,draw, inner sep=1.0mm}, 
		thickedge/.style={line width=1.3pt}]
		
		\pgfmathsetmacro{\x}{1}
		\pgfmathsetmacro{\y}{0.5}
		
		\node[vertex, fill=black!60] (s) at (0,0) {};
		
		\node[vertex, fill=black] (x0) at (90:3*\x) [label=90:{$x$}] {};
		\node[vertex, fill=black] (y0) at (90+120:3*\x) [label=90+120:{$y$}] {};
		\node[vertex, fill=black] (z0) at (90+240:3*\x) [label=90+240:{$z$}] {};
		
		\node[vertex] (x1) at (90:2*\x) {};
		\node[vertex] (y1) at (90+120:2*\x) {};
		\node[vertex] (z1) at (90+240:2*\x) {};
		
		\node[vertex, fill=black!40] (xy1) at (150:0.25 + 2*\y) {};
		\node[vertex, fill=black!40] (yz1) at (150+120:0.25 + 2*\y) {};
		\node[vertex, fill=black!40] (zx1) at (150+240:0.25 + 2*\y) {};
		
		\node[vertex] (xy2) at (150:0.25 + 1*\y) {};
		\node[vertex] (yz2) at (150+120:0.25 + 1*\y) {};
		\node[vertex] (zx2) at (150+240:0.25 + 1*\y) {};
		
		\node[vertex] (x2) at (90:1*\x) {};
		\node[vertex] (y2) at (90+120:1*\x) {};
		\node[vertex] (z2) at (90+240:1*\x) {};
		
		\draw[thickedge] (x0)--(y0)--(z0)--(x0);
		\draw (x1)--(y1)--(z1)--(x1);
		\draw (x2)--(y2)--(z2)--(x2);
		
		\draw (x0)--(xy1)--(y0) (x1)--(xy1)--(y1);
		\draw (x1)--(xy2)--(y1) (x2)--(xy2)--(y2);
		
		\draw (y0)--(yz1)--(z0) (y1)--(yz1)--(z1);
		\draw (y1)--(yz2)--(z1) (y2)--(yz2)--(z2);
		
		\draw (z0)--(zx1)--(x0) (z1)--(zx1)--(x1);
		\draw (z1)--(zx2)--(x1) (z2)--(zx2)--(x2);
		
		\draw (x0)--(x1)--(x2)--(s);
		\draw (y0)--(y1)--(y2)--(s);
		\draw (z0)--(z1)--(z2)--(s);
		
		\node at (270:2.7*\x) {The subgraph $G[f]$ of $G$};
		
		\node (a) at (150:2.5*\x) {a};
		\node (b) at (30:2.5*\x) {b};
		\node (c) at (0:2.7*\x) {c};
		
		\draw[black!70, ->] (a)--(xy1);
		\draw[black!70, ->] (b)--(zx1);
		\draw[black!70, ->] (c)--(yz1);
		
		\begin{scope}[shift={(7,0)}]
			\node[vertex, fill=black!60] (s) at (0,0) {};
			
			\node[vertex, fill=black] (x0) at (90:3*\x) [label=90:{$x$}] {};
			\node[vertex, fill=black] (y0) at (90+120:3*\x) [label=90+120:{$y$}] {};
			\node[vertex, fill=black] (z0) at (90+240:3*\x) [label=90+240:{$z$}] {};
			
			\node[vertex] (x1) at (90:2*\x) {};
			\node[vertex] (y1) at (90+120:2*\x) {};
			\node[vertex] (z1) at (90+240:2*\x) {};
			
			
			\node[vertex] (xy2) at (150:0.25 + 1*\y) {};
			\node[vertex] (yz2) at (150+120:0.25 + 1*\y) {};
			\node[vertex] (zx2) at (150+240:0.25 + 1*\y) {};
			
			\node[vertex] (x2) at (90:1*\x) {};
			\node[vertex] (y2) at (90+120:1*\x) {};
			\node[vertex] (z2) at (90+240:1*\x) {};
			
			\draw[thickedge] (x0)--(y0)--(z0)--(x0);
			\draw (x1)--(y1)--(z1)--(x1);
			\draw (x2)--(y2)--(z2)--(x2);
			
			\draw (x0)--(y0) (x1)--(y1);
			\draw (x1)--(xy2)--(y1) (x2)--(xy2)--(y2);
			
			\draw (y0)--(z0) (y1)--(z1);
			\draw (y1)--(yz2)--(z1) (y2)--(yz2)--(z2);
			
			\draw (z0)--(x0) (z1)--(x1);
			\draw (z1)--(zx2)--(x1) (z2)--(zx2)--(x2);
			
			\draw (x0)--(x1)--(x2)--(s);
			\draw (y0)--(y1)--(y2)--(s);
			\draw (z0)--(z1)--(z2)--(s);
			
			\node at (270:2.7*\x) {The subgraph $PG[f]$ of $PG$};
		\end{scope}
		
	\end{tikzpicture}
	\caption{On the left is a possible subgraph $G[f]$ of $G$, obtained in the proof of Theorem \ref{thm_mpg_qef_crit}, where $f$ is a face of $H$. 
		The vertices $a$, $b$ and $c$ are each adjacent to exactly two vertices of $H$.
		On the right is $PG[f] = G[f]-\{a,b,c\}$, which is a subgraph of $PG$.}  
	\label{fig_planar_mod}
\end{figure}

\begin{thm}
	\label{thm_equi_planar_center}
	Let $H$ be a maximal plane graph of order at least 4 satisfying the hypotheses of Theorem \ref{thm_mpg_qef_crit}, and let $\alpha \geq \text{diam}(H) + 3$ be an integer.
	Let $G$ be the maximal planar graph constructed in Theorem \ref{thm_mpg_qef_crit}, and let $PG$ be the planar graph formed by removing from $G$ all vertices of $G-H$ which are adjacent to exactly two vertices of $H$.
	Then $H$ is the center of $PG$.
	\label{thm_cen_H_cen_PG}
\end{thm}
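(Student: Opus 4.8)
The plan is to show that in $PG$ every vertex of $H$ has eccentricity exactly $\alpha$, while every vertex of $PG - H$ has eccentricity at least $\alpha + 1$; since the center is the subgraph induced by the vertices of minimum eccentricity and $H$ is induced in $PG$, this identifies the center with $H$. The whole argument rests on one structural observation about the construction of Theorem \ref{thm_mpg_qef_crit}: a vertex $w$ of $G - H$ lying in a face $f : x,y,z$ is adjacent to two vertices of $H$ precisely when it is one of the three vertices placed in the outermost square faces of the glued graph $\Gamma_f$, and these are exactly the vertices deleted to form $PG$. Hence every surviving vertex of $PG - H$ is adjacent to at most one vertex of $H$. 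Here the hypothesis $\alpha \geq \text{diam}(H) + 3$, which forces every depth $\delta$ occurring in the construction to satisfy $\delta \geq 2$, is what guarantees that no vertex of $G-H$ is adjacent to all three of $x,y,z$, so that after the deletion the dichotomy ``at most one neighbour in $H$'' is genuinely exhaustive.

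First I would record that the deletion does not change distances between surviving vertices. Given a $G$-geodesic between two vertices of $PG$, any time it passes through a deleted square vertex it does so between two of that vertex's four neighbours, and each such pair lies at distance at most $2$ along the triangular frame of $\Gamma_f$, so the passage can be rerouted within $PG$ at no extra cost. Thus $d_{PG}(a,b) = d_G(a,b)$ for all $a,b \in V(PG)$. Combined with the facts, from Theorem \ref{thm_equi_subgraph_center}, that $e_G(u) = \alpha$ for every $u \in V(H)$ and that each such $u$ has an eccentric vertex $v$ with $d(u,v)=\alpha$ lying deep inside a face not containing $u$ (so $v$ is not one of the deleted vertices), this immediately yields $e_{PG}(u) = \alpha$ for every $u \in V(H)$.

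The heart of the proof is bounding from below the eccentricities of the surviving vertices of $PG - H$. Fix such a $w$, inside a face $f : x,y,z$, and let $v$ be a vertex with $d(x,v) = \alpha$ lying outside $f$, furnished by the claim in the proof of Theorem \ref{thm_equi_subgraph_center}; such a $v$ survives in $PG$. Since $H[f]$ separates $w$ from $v$, every $w$--$v$ path meets $\{x,y,z\}$, so
\[
d_{PG}(w,v) \;\geq\; d_G(w,v) \;\geq\; \min_{p\in\{x,y,z\}}\bigl(d_G(w,p)+d_G(p,v)\bigr).
\]
If $w$ has no neighbour in $H$ then $d_G(w,\{x,y,z\}) \geq 2$ and $d_G(v,\{x,y,z\}) \geq \alpha-1$ (the latter because $x$ is adjacent to $y$ and $z$), so Lemma \ref{lem_sep_different_components} gives $d_{PG}(w,v) \geq 2 + (\alpha - 1) = \alpha + 1$. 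If instead $w$ is adjacent to exactly one vertex of $H$, say $x$, then $d_G(w,x) = 1$ but $d_G(w,y), d_G(w,z) \geq 2$; using $d_G(x,v) = \alpha$ and $d_G(y,v), d_G(z,v) \geq \alpha - 1$, every term of the displayed minimum is at least $\alpha + 1$, so again $e_{PG}(w) \geq \alpha + 1$.

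Putting these together, the radius of $PG$ is $\alpha$ and the vertices of eccentricity $\alpha$ are exactly those of $H$, whence the center of $PG$ is $H$. The main obstacle, and the whole reason the deletion is performed, is the case of a surviving vertex adjacent to exactly one vertex of $H$: such a vertex still lies at distance $1$ from the boundary of its face, so the crude separator bound only gives eccentricity $\geq \alpha$, and one must instead exploit that its two \emph{non-adjacent} boundary vertices each cost an extra step, which is precisely what a vertex adjacent to two boundary vertices (the deleted kind) could avoid. Verifying this ``$+1$'' gain uniformly, and confirming that $\delta \geq 2$ really does eliminate all three-neighbour vertices across every one of the $\Gamma$-variants used in Theorem \ref{thm_mpg_qef_crit}, is the part that requires the most care.
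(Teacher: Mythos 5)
Your proposal is correct and follows essentially the same route as the paper's proof: establish that deleting the degree-4 ``square'' vertices preserves distances (the paper phrases this as $PG$ being isometric in $G$, via rerouting a geodesic around each deleted vertex), use the far eccentric vertex guaranteed by Lemma \ref{lem_far_face} (which survives deletion since $\alpha \geq \mathrm{diam}(H)+3$), and then split the surviving vertices of $PG-H$ into the two cases ``adjacent to exactly one of $x,y,z$'' and ``adjacent to none,'' applying the separator bound to get eccentricity strictly greater than $\alpha$ in both. Your $\min_{p\in\{x,y,z\}}$ formulation is just a compact rewriting of the paper's two subcases, so the arguments coincide in substance.
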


\begin{proof}
	By Lemma \ref{lem:mpgiso}, the graph $H$ is isometric in $PG$.
	
	\textit{Claim:} The graph $PG$ is an isometric subgraph of $G$.
	
	\textit{Proof of claim:}
	Note we obtained $PG$ from the maximal planar graph $G$ by only removing vertices of degree 4, none of which are adjacent in $G$ (by construction of $G$).
	Let $u_1$ and $u_k$ be vertices of $PG$ and let $P$ be a $u_1-u_k$ geodesic in $G$.
	Among all such geodesics, let $P$ contain the minimum number of vertices of $G-PG$.
	If $P$ contains no vertex of $G-PG$, we have proven the claim. 
	So assume to the contrary that $P$ has the form $P: u_1, \dots, a, b, c, \dots ,u_k$, where $b$ is a vertex removed in constructing $PG$. 
	Letting $b^*$ be either of the vertices in $N(b)-\{a,c\}$, we see that the path $P^*: u_1, \dots, a, b^*, c, \dots ,u_k$ is also a $u_1-u_k$ geodesic in $G$, and one that contains fewer vertices of $G-PG$, contradicting the minimality of $P$ and thus proving the claim.
	
	By the construction of $PG$, and the fact that $PG$ is isometric in $G$, every vertex of $H$ will have eccentricity $\alpha$ in $PG$ (by the same arguments as those outlined in the proof of Theorem \ref{thm_mpg_qef_crit}).
	Thus to prove the theorem, it suffices to prove that every vertex of $PG-H$ has eccentricity strictly greater than $\alpha$.
	
	First, we establish that each vertex $u$ of $H$ has an eccentric vertex which does not lie inside a face containing $u$ in its boundary. 
	This follows from Lemma \ref{lem_far_face} by the same argument as was used in Theorem \ref{thm_equi_subgraph_center} (note that since $\alpha \geq \text{diam}(H) + 3$, the desired eccentric vertex is not adjacent to any vertex of $H$, and hence is in $PG$).
	
	Now let $w$ be a vertex of $PG-H$, and denote by $f:x,y,z$ the face of $H$ containing $w$.
	There are two cases to consider.
	
	\textit{Case 1:} the vertex $w$ is adjacent to one of $x$, $y$ or $z$.
	Assume without loss of generality that $w$ is adjacent to $x$. 
	By the construction of $PG$, the vertex $w$ is not adjacent to $y$ or $z$.
	By the preceding paragraph, there is a vertex $v$ that is eccentric to $x$, such that $v$ does not lie in a face of $H$ containing $x$ on its boundary. 
	Because the set $V(H[f]) = \{x,y,z\}$ separates $w$ and $v$, and $w$ is not adjacent to $y$ or $z$, any $w-v$ path $P$ satisfies either:
	\[
	\ell(P) = d(w, x) + d(x,v) = 1 + \alpha > \alpha
	\]
	or, for some $t\in \{y,z\}$:
	\[
	\ell(P) = d(w, t) + d(t,v) \geq 2 + (\alpha - 1) > \alpha.
	\]
	
	Where $d(t,v) \geq (\alpha - 1)$, since $d(x,v) = \alpha$ and $x$ is adjacent to $t$.
	Since the distance $d(w,v) > \alpha$, the eccentricity $e(w) > \alpha$ in $PG$.
	
	\textit{Case 2:} the vertex $w$ is not adjacent to any of $x$, $y$ or $z$, and thus the distance $d(w, H[f]) \geq 2$.
	Similarly to Case 1, we have a vertex $v$ that does not lie in a face of $H$ that has $x$ on its boundary, and such that $d(x,v) = \alpha$.
	The set $H[f]$ separates $w$ and $v$, so by Lemma \ref{lem_sep_different_components}:
	\[
	d(w,v) \geq d(w, H[f]) + d(H[f],v) \geq 2 + (\alpha - 1) > \alpha.
	\]
	Thus $w$ has eccentricity strictly greater than $\alpha$, completing the proof.
\end{proof}

The following corollaries are immediate, but worth stating.

\begin{cor}
	A maximal plane graph $H$ is the center of some plane graph if and only if, for each vertex $u$ of $H$, there exists some face $f$ of $H$ such that $u$ is in $\qcc_H(H[f])$.
	\label{cor_cen_char}
\end{cor}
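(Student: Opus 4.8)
The plan is to establish the two implications separately. Sufficiency is essentially immediate from the machinery already built. Assume that for every vertex $u$ of $H$ there is a face $f$ with $u\in\qcc_H(H[f])$. Taking any $\alpha\geq\text{diam}(H)+3$, this hypothesis is, a fortiori, the hypothesis required by Theorem \ref{thm_mpg_qef_crit}, since the latter only asks for a quasi-eccentric face for those $u$ with $e_H(u)<\alpha$. Hence, for $H$ of order at least $4$, Theorem \ref{thm_cen_H_cen_PG} applies and produces a plane graph $PG$ whose center is exactly $H$. (The finitely many maximal plane graphs of order at most $3$ can be checked directly.)

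For necessity, suppose $H$ is the center of a plane graph $G$ and set $\alpha=\text{rad}(G)$, so that $V(H)=\fe_G(\alpha)$. By Lemma \ref{lem:mpgiso} the maximal planar subgraph $H$ is isometric in $G$, so $e_H(u)\leq e_G(u)=\alpha$ for each $u\in V(H)$; in particular $\text{diam}(H)\leq\alpha$ and $V(H)\subseteq\fe_G(\alpha)$, so the hypotheses of Corollary \ref{cor_qef_mpg_nec} are met. I would then fix a vertex $u$ of $H$ and argue by cases on $e_H(u)$. If $e_H(u)<\alpha$, Corollary \ref{cor_qef_mpg_nec} directly supplies a face $f$ of $H$ with $u\in\qcc_H(H[f])$, as desired.

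The only remaining case, $e_H(u)=\alpha$, is the crux of the argument, because the necessity criterion of Theorem \ref{thm_qef_criterion} says nothing about vertices of maximal eccentricity. Here $e_H(u)=\alpha$ together with $\text{diam}(H)\leq\alpha$ forces $\alpha=\text{diam}(H)$, so $u$ is peripheral; choosing $v\in V(H)$ with $d_H(u,v)=e_H(u)=\alpha$ gives $e_H(v)=\alpha$ as well, whence $u$ is an eccentric vertex of $v$. Selecting any face $f$ of $H$ with $v$ on its boundary and applying Observation \ref{obs_all_ecc_are_qcc} to $S=V(H[f])$ then yields $u\in\qcc_H(H[f])$. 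This covers every vertex of $H$ and closes the necessity direction. I expect this peripheral case to be the main obstacle, since it is precisely the gap left open by Theorem \ref{thm_qef_criterion}, and Observation \ref{obs_all_ecc_are_qcc} is exactly the tool that bridges it.
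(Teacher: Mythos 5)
Your proof is correct and is exactly the derivation the paper intends: the paper labels this corollary ``immediate,'' with sufficiency coming from Theorem \ref{thm_cen_H_cen_PG} and necessity from Corollary \ref{cor_qef_mpg_nec}, precisely as you argue. Your explicit treatment of the two points that ``immediate'' glosses over --- the peripheral case $e_H(u)=\alpha$, which Corollary \ref{cor_qef_mpg_nec} does not cover and which you close with Observation \ref{obs_all_ecc_are_qcc}, and the order-at-most-3 graphs excluded by the order-at-least-4 hypothesis of Theorem \ref{thm_cen_H_cen_PG} --- is accurate and completes the argument.
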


\begin{cor}
	A maximal plane graph $H$ is the center of some planar graph $G$ if and only if $H$ is an equi-eccentric subgraph of some maximal plane graph $G'$.
	\label{cor_centre_iff_equi}
\end{cor}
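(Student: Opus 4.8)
The plan is to route both implications through the combinatorial characterisation of Corollary \ref{cor_cen_char}, which already asserts that a maximal plane graph $H$ is the center of some plane graph if and only if every vertex $u$ of $H$ lies in $\qcc_H(H[f])$ for some face $f$. Call this condition $(\star)$. The corollary then reduces to showing that $(\star)$ is equivalent to $H$ being an equi-eccentric subgraph of some maximal plane graph, and both halves of that equivalence are essentially already packaged in Theorem \ref{thm_mpg_qef_crit} and its necessity form, Corollary \ref{cor_qef_mpg_nec}.

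For the direction that a center is equi-eccentric, I would assume $H$ is the center of some plane graph, invoke Corollary \ref{cor_cen_char} to obtain $(\star)$, and apply the sufficiency half of Theorem \ref{thm_mpg_qef_crit} with $\alpha = \text{diam}(H)$. Since $(\star)$ supplies a quasi-eccentric face for \emph{every} vertex, it in particular supplies one for every $u$ with $e_H(u) < \alpha$, so the hypothesis of Theorem \ref{thm_mpg_qef_crit} is met and yields a maximal plane graph $G'$ with $V(H) \subseteq \mathcal{E}_{G'}(\alpha)$ --- exactly the statement that $H$ is equi-eccentric in $G'$. For the converse, I would assume $H$ is equi-eccentric in a maximal plane graph $G'$, so $V(H) \subseteq \mathcal{E}_{G'}(\alpha)$ for some $\alpha$. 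Lemma \ref{lem:mpgiso} makes $H$ isometric in $G'$, which forces $\alpha \geq \text{diam}(H)$, and Corollary \ref{cor_qef_mpg_nec} then produces a quasi-eccentric face for every vertex $u$ with $e_H(u) < \alpha$. If I can upgrade this to \emph{all} vertices, condition $(\star)$ follows and Corollary \ref{cor_cen_char} closes the argument.

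The one genuine gap --- and the step I expect to be the main obstacle --- is the converse's treatment of the vertices that Corollary \ref{cor_qef_mpg_nec} leaves untouched, namely those with $e_H(u) = \alpha$. Because $\alpha \geq \text{diam}(H) \geq e_H(u)$, such a vertex forces $\alpha = \text{diam}(H)$ and is therefore peripheral. I would dispose of it with a short standalone observation: in any maximal plane graph, a peripheral vertex is automatically quasi-eccentric to some face. The argument is that if $d(u,w) = e_H(u) = \text{diam}(H)$, then $e_H(w) = \text{diam}(H) = d(w,u)$ as well, so $u$ is an eccentric vertex of $w$; choosing any face $f$ whose boundary contains $w$ and applying Observation \ref{obs_all_ecc_are_qcc} with $S = V(H[f])$ places $u$ in $\qcc_H(H[f])$. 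Feeding this back, every vertex of $H$ --- peripheral or not --- is quasi-eccentric to some face, so $(\star)$ holds and the converse is complete. It is worth flagging that $(\star)$ cannot be taken for granted in general, since the introduction exhibits maximal planar graphs that are not the center of any planar graph; the peripheral observation does real work precisely by covering the vertices the necessity lemma omits.
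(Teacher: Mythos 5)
Your proof is correct and follows essentially the same route the paper intends: the paper presents this corollary as an immediate consequence of Corollary \ref{cor_cen_char}, Corollary \ref{cor_qef_mpg_nec} and Theorem \ref{thm_mpg_qef_crit}, which is exactly the chain you assemble. Your explicit treatment of the vertices with $e_H(u)=\alpha$ (forcing $\alpha=\text{diam}(H)$, so $u$ is peripheral and hence eccentric to some vertex $w$, whence Observation \ref{obs_all_ecc_are_qcc} applies to any face containing $w$) fills in the one detail the paper leaves implicit, and it is the same argument the paper itself uses in proving Corollary \ref{cor_ecc_mpg_suff}.
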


By Corollary \ref{cor_centre_iff_equi}, Corollaries \ref{cor_ecc_mpg_suff} and \ref{cor_face_mpg_suff} also give sufficient conditions for a maximal planar graph to be the center of some planar graph.

As Theorem \ref{thm_equi_in_center} below shows, the results of Theorems \ref{thm_equi_subgraph_center} and \ref{thm_equi_planar_center} should not be surprising, and do not depend strongly on the specific construction used in the proof of Theorem \ref{thm_mpg_qef_crit}. 

\begin{thm}
	\label{thm_equi_in_center}
	Let $H$ be a maximal plane graph which does not have a dominating face.
	If $H$ is an equi-eccentric subgraph of a plane graph $G$, then $H$ is a subgraph of the center of $G$.
\end{thm}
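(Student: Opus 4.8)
The plan is to reduce the statement to showing that the common eccentricity $\alpha$ (for which $V(H)\subseteq \fe_G(\alpha)$) equals $\text{rad}(G)$: since every vertex of $H$ already has eccentricity exactly $\alpha$ in $G$, this instantly places $V(H)$ inside the center $\fe_G(\text{rad}(G))$. Two preliminary facts will be used throughout. First, because $H$ is a maximal plane (hence maximal planar) subgraph of the plane, and thus planar, graph $G$, Lemma \ref{lem:mpgiso} gives that $H$ is isometric in $G$, so $d_G(u,v)=d_H(u,v)$ for $u,v\in V(H)$. Second, the no-dominating-face hypothesis forces $\alpha\geq 2$: applied to any face it produces a vertex of $H$ at distance at least $2$ from that face's boundary, so $\text{diam}(H)\geq 2$, and $\alpha=e_G(u)\geq e_H(u)$ for $u\in V(H)$ (distances only shrink in the supergraph) gives $\alpha\geq\text{diam}(H)\geq 2$.

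The heart of the argument, and the step I expect to be the main obstacle, is the following claim: for every face $f:x,y,z$ of $H$ and every boundary vertex, say $x$, no eccentric vertex of $x$ in $G$ lies strictly inside the open region $f$. To prove it I will invoke the hypothesis to choose a vertex $a\in V(H)$ with $d_G(a,\{x,y,z\})\geq 2$; such $a$ exists because $f$ does not dominate $V(H)$, and $a$, being a vertex of $H$, lies outside the open region of $f$ (and $a\notin\{x,y,z\}$). Suppose some eccentric vertex $p$ of $x$ were strictly inside $f$. Then the triangle $xyz$ separates $a$ from $p$ in the plane graph $G$, so Lemma \ref{lem_sep_different_components} yields $d_G(a,p)\geq d_G(a,\{x,y,z\})+d_G(p,\{x,y,z\})\geq 2+(\alpha-1)=\alpha+1$, where $d_G(p,\{x,y,z\})\geq d_G(p,x)-1=\alpha-1$ since $x,y,z$ are mutually adjacent and $d_G(x,p)=\alpha$. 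This contradicts $e_G(a)=\alpha$. The scenario this rules out is exactly the troublesome one in which every vertex far from $x$ is ``hidden'' inside the face $f$; the no-dominating-face hypothesis is precisely what excludes it, and this is where that hypothesis is essential.

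With the claim established, I will finish by contradiction. Suppose $\text{rad}(G)<\alpha$; then some $c\in V(G)$ satisfies $e_G(c)<\alpha$, and since every vertex of $H$ has eccentricity exactly $\alpha$, necessarily $c\in V(G)-V(H)$. Let $f:x,y,z$ be the face of $H$ containing $c$, which is well defined because $H$ is $3$-connected and so has an essentially unique embedding by Whitney's theorem. As $\alpha\geq 2$, an eccentric vertex $p$ of $x$ has $d_G(x,p)=\alpha\geq 2$, so $p\notin\{x,y,z\}$; by the claim $p$ does not lie strictly inside $f$ either, hence $p$ lies strictly outside the region $f$, and the triangle $xyz$ separates $c$ from $p$. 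Lemma \ref{lem_sep_different_components} then gives $d_G(c,p)\geq d_G(c,\{x,y,z\})+d_G(p,\{x,y,z\})\geq 1+(\alpha-1)=\alpha$, using $d_G(c,\{x,y,z\})\geq 1$ (as $c\notin V(H)$) and $d_G(p,\{x,y,z\})\geq\alpha-1$ as before. Thus $e_G(c)\geq\alpha$, contradicting the choice of $c$. Hence $\text{rad}(G)=\alpha$, and $H$ is a subgraph of the center of $G$.
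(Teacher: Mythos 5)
Your proof is correct and follows essentially the same route as the paper: your preliminary claim (no eccentric vertex of a boundary vertex $x$ lies strictly inside the face $f$, ruled out via a vertex $a$ at distance $\geq 2$ from $H[f]$ supplied by the no-dominating-face hypothesis) is exactly the paper's Case~1, and your final contradiction using Lemma~\ref{lem_sep_different_components} with $d_G(c,H[f]) \geq 1$ and $d_G(p,H[f]) \geq \alpha-1$ is exactly the paper's Case~2. The only differences are organizational, plus your explicit (and welcome) justifications that $\alpha \geq 2$ and $p \notin \{x,y,z\}$, which the paper leaves implicit.
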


\begin{proof}
	Let $\alpha$ be the integer such that $V(H) \subseteq \mathcal{E}_G(\alpha)$.
	Assume to the contrary that there exists a vertex $u$ of $G-H$ such that $e(u) < \alpha$.
	Let $f:x,y,z$ be the face of $H$ containing $u$, and let $w$ be an eccentric vertex of $x$ in $G$.
	We consider two cases.
	
	\textit{Case 1:} The vertex $w$ lies inside the face $f$.
	Let $v$ be a vertex of $H$ which is not adjacent to any vertex of $H[f]$ (such a vertex exists by assumption).
	Then, since $H[f]$ separates $v$ and $w$, we have by Lemma \ref{lem_sep_different_components} that:
	\[
	d(v,w) \geq d(v, H[f]) + d(w, H[f]) \geq 2 + (\alpha - 1) > \alpha.
	\]
	This contradicts the assumption that all vertices of $H$ have the same eccentricity in $G$.
	
	\textit{Case 2:} The vertex $w$ lies outside the face $f$.
	Because $w$ is not in $f$, the subgraph $H[f]$ separates $u$ and $w$, so by Lemma \ref{lem_sep_different_components}:
	\[
	d(u,w) \geq d(u, H[f]) + d(w, H[f]) \geq 1 + (\alpha -1) \geq \alpha.
	\]
	This contradicts the assumption that $e(u) < \alpha$, completing the proof.
\end{proof}

\section{An application --- all small MPGs embed centrally}

In this section, we show that every maximal planar graph of order at most eight is the center of some planar graph.
We first need two lemmas, the first of which is very well known.

\begin{lem}
	If $G=(V,E)$ is a $k$-connected graph of diameter $d$, then $|V|\geq 2 + (d-1)k$.
	\label{lem_order_kappa_diam}
\end{lem}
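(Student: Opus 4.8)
The plan is to realize the diameter by a pair of far-apart vertices and then exploit the connectivity hypothesis, via Menger's theorem, to produce many internally disjoint paths joining them, each of which is forced to be long. First I would choose vertices $u$ and $v$ with $d(u,v) = d$; these exist by the definition of diameter. The small-diameter cases are trivial and should be disposed of at the outset: for $d = 0$ or $d = 1$ the claimed bound reads $|V| \geq 2 - k$ or $|V| \geq 2$, and any $k$-connected graph with $k \geq 1$ already has at least two vertices. So I would assume $d \geq 2$, which has the important consequence that $u$ and $v$ are non-adjacent.

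The central step is to invoke the standard characterization of $k$-connectivity (Menger's theorem, in the form due to Whitney): since $G$ is $k$-connected and $u, v$ are non-adjacent, there exist $k$ pairwise internally disjoint $u$--$v$ paths $P_1, \dots, P_k$. Because $d(u,v) = d$, every $u$--$v$ path has length at least $d$; hence each $P_i$ contains at least $d - 1$ internal vertices, i.e.\ vertices other than the two endpoints $u$ and $v$.

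I would then simply count. The paths $P_1, \dots, P_k$ share only the endpoints $u$ and $v$, so every internal vertex belongs to exactly one of them, and the internal vertices they contribute are pairwise distinct. This yields at least $k(d-1)$ vertices distinct from $u$ and $v$, and adding the two endpoints back gives
\[
|V| \geq 2 + k(d-1),
\]
which is the desired inequality.

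The only point requiring any care is the applicability of Menger's theorem: its internally-disjoint-paths form is cleanest for non-adjacent endpoints, which is precisely why I separate off the cases $d \leq 1$ to ensure that $u$ and $v$ are non-adjacent. Beyond this bookkeeping I do not anticipate any substantive obstacle, since the argument is a direct combination of the diameter condition (forcing long paths) and connectivity (forcing many of them).
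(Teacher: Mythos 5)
Your proof is correct, but it takes a genuinely different route from the paper. You work on the \emph{paths} side of Menger's theorem: $k$ internally disjoint $u$--$v$ paths, each forced by $d(u,v)=d$ to carry at least $d-1$ internal vertices, and disjointness makes the count $2+k(d-1)$ go through. The paper instead works on the \emph{cuts} side, directly from the definition of $k$-connectivity: fixing $u,v$ with $d(u,v)=d$, it considers the distance layers $N_i=\{x\in V: d(u,x)=i\}$ for $1\leq i\leq d-1$, observes that each $N_i$ separates $u$ from $v$ and hence satisfies $|N_i|\geq k$, and sums over the $d-1$ pairwise disjoint layers. The paper's argument is more elementary in that it never invokes Menger/Whitney --- only the fact that in a $k$-connected graph every separator has at least $k$ vertices --- and it needs no case split: your detour through $d\leq 1$ exists only to guarantee non-adjacency of $u$ and $v$ so that the internally-disjoint-paths form of Menger applies, a complication the separator argument never encounters. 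What your approach buys in exchange is a picture that localizes the $k(d-1)$ vertices on explicit paths rather than in layers, which some find more concrete; but as a matter of economy of hypotheses, the paper's proof is the lighter one.
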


\begin{proof}
	Let $u$ and $v$ be vertices of $G$ such that $d(u,v) = d$, and define $N_i = \{x\in V: d(u,x) = i\}$.
	It is clear that for any $i$ in $\{1,2, \dots, d-1 \}$, the set $N_i$ disconnects $G$, and so $|N_i| \geq k$.
	Thus $|V| \geq |\{u,v\}| + (d-1)|N_i| \geq 2 + (d-1)k$.
\end{proof}

\begin{lem}
	Let $G=(V,E)$ be a maximal planar graph of order five or more, with exactly one universal vertex $u$.
	If there exists a face of $G$ which dominates $V$ and does not contain $u$ in its boundary, then $G$ is a subgraph of the center of some maximal planar graph.
	\label{lem_domface}
\end{lem}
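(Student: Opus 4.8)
The plan is to reduce the statement to the quasi-eccentric face criterion and then invoke Theorem \ref{thm_equi_subgraph_center}. First I would record the basic metric structure of $G$: since $u$ is universal, $e_G(u) = 1$ and every two vertices lie at distance at most $2$, so $\text{diam}(G) \leq 2$; as $G$ is planar of order at least $5$ it cannot be complete (recall $K_5$ is non-planar), and hence $\text{diam}(G) = 2$. Moreover a vertex has eccentricity $1$ precisely when it is adjacent to every other vertex, i.e.\ when it is universal, so the hypothesis that $u$ is the \emph{only} universal vertex means that $u$ is the unique vertex of $G$ with eccentricity strictly less than $2$.

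I would then take $\alpha = \text{diam}(G) = 2$ and apply Theorem \ref{thm_mpg_qef_crit}. Since the only vertex $w$ of $G$ satisfying $e_G(w) < \alpha$ is $u$, the hypothesis of that theorem collapses to a single requirement: that there exist a face $f$ of $G$ with $u \in \qcc_G(G[f])$. Everything therefore reduces to exhibiting one such face, and the natural candidate is the dominating face $f:x,y,z$ supplied by the hypothesis, which does not contain $u$.

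The heart of the argument is checking $u \in \qcc_G(G[f])$. Because $u$ is universal and $u \notin \{x,y,z\}$, we have $d(u,x) = d(u,y) = d(u,z) = 1$. For an arbitrary vertex $v$ of $G$, the fact that $f$ dominates $V$ means $v \in N[\{x,y,z\}]$, so $d(v,s) \leq 1$ for some $s \in \{x,y,z\}$; for that $s$ we get $d(u,s) = 1 \geq d(v,s)$. As $v$ was arbitrary, $u$ is quasi-eccentric to $G[f]$, which is exactly the place where the dominating-face hypothesis is used. (If one prefers a uniform statement, every non-universal vertex $w$ is in fact quasi-eccentric to any face containing $u$, via the coordinate $s=u$, since $d(w,u)=1 \geq d(v,u)$ for all $v$; but for the reduction above only $u$ needs to be treated.)

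Finally I would assemble the conclusion: $G$ satisfies the condition of Theorem \ref{thm_mpg_qef_crit} for $\alpha = 2$, so $G$ embeds as an equi-eccentric subgraph with $V(G) \subseteq \fe_{G'}(2)$ of a maximal plane graph $G'$, and since $G$ has order at least $4$, Theorem \ref{thm_equi_subgraph_center} guarantees that $G'$ has radius $2$, i.e.\ that $G$ is contained in the center of $G'$. I expect no serious obstacle: the only point requiring care is recognizing that the uniqueness of the universal vertex together with $\text{diam}(G)=2$ leaves $u$ as the sole vertex that must be certified quasi-eccentric, after which the dominating face settles the matter immediately.
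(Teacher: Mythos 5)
Your proof is correct, but it takes a genuinely different route from the paper's. The paper proves Lemma \ref{lem_domface} directly, with none of the heavy machinery: it adds a single new vertex $v$ inside the dominating face $f:x,y,z$, joined to $x$, $y$ and $z$. Since $f$ dominates $V$, every vertex of $G-u$ is within distance $2$ of $v$; since $u \notin \{x,y,z\}$, the vertex $v$ is not adjacent to $u$, so $d(u,v) = 2$, and the resulting graph $G'$ is a self-centered maximal planar graph of radius $2$ containing $G$, so $G$ lies in its center. You instead verify the quasi-eccentric face criterion --- correctly observing that $u$ is the unique vertex with $e_G(u) < \alpha = 2$, and that the dominating face certifies $u \in \qcc_G(G[f])$; this last verification is exactly Observation \ref{obs_qcc_dom} of the paper, which the paper itself uses later for the graph $H_2$ in Theorem \ref{thm_order_eight} --- and then you invoke Theorems \ref{thm_mpg_qef_crit} and \ref{thm_equi_subgraph_center} as black boxes. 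Both routes are sound, and since those theorems precede the lemma in the paper there is no circularity. What the paper's approach buys is economy and explicitness: the witness supergraph is $G$ plus one vertex, and self-centeredness is immediate. What your approach buys is uniformity: the lemma becomes a formal consequence of the general characterization, and essentially the same two-line check (universal vertex plus dominating face implies quasi-eccentricity) works for any admissible $\alpha$ --- though at the cost of deploying the elaborate $\Gamma$-glueing construction where a single added vertex suffices.
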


\begin{proof}
	Since $u$ is a universal vertex, the radius of $G$ is one, and every vertex of $G-u$ has eccentricity two.
	Let $f:x,y,z$ be the face of $G$ which dominates $V-\{u\}$. 
	Create a new graph $G'$ by adding a vertex $v$ to $f$, and making $v$ adjacent to each of $x$, $y$ and $z$.
	Since $f$ dominates $V-\{u\}$, every vertex of $V-\{u\}$ is distance at most two from $v$.
	Because $u$ and $v$ are not adjacent, but $u$ is adjacent to $x$, we have that $d(u,v) = 2$.
	The graph $G'$ is thus a self-centered maximal planar graph containing $G$ as a subgraph.
\end{proof}

We also need a quick observation.

\begin{obs}
	Let $G= (V,E)$ be a connected graph, let $S\subset V$ be a set which dominates $V$ and let $u$ be a vertex of $G$.
	If $u$ is not in $S$, then $u$ is quasi-eccentric to $S$.
	\label{obs_qcc_dom}
\end{obs}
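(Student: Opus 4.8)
The plan is to unfold the definition of quasi-eccentricity and verify its defining condition directly, exploiting two elementary facts: that domination bounds every vertex's distance to $S$ by one, and that a vertex outside $S$ is at distance at least one from every element of $S$. Recall that $u \in \qcc_G(S)$ precisely when, for every vertex $v$ of $G$, there is some $s \in S$ with $d(u,s) \geq d(v,s)$. So I would fix an arbitrary vertex $v$ of $G$ and exhibit a single witnessing vertex $s \in S$ that makes this inequality hold.

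For the witness, I would take $s \in S$ to be a vertex nearest to $v$, so that $d(v,s) = d(v,S)$. Since $S$ dominates $V$, we have $V \subseteq N[S]$, and hence $d(v,S) \leq 1$; that is, $v$ either lies in $S$ or is adjacent to a member of $S$. On the other hand, the hypothesis $u \notin S$ means $u$ differs from every vertex of $S$, so in the simple graph $G$ we have $d(u,s) \geq 1$ for this (indeed for any) $s \in S$. Combining the two bounds yields $d(u,s) \geq 1 \geq d(v,S) = d(v,s)$, which is exactly the inequality required to witness quasi-eccentricity for this $v$. As $v$ was arbitrary, I would conclude that $u$ is quasi-eccentric to $S$.

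There is no genuine obstacle here; the only point requiring a moment's care is the simplicity of $G$, which guarantees that $u \notin S$ forces $d(u,s) \geq 1$ rather than merely $d(u,s) \geq 0$. This is precisely what makes the single choice of a nearest witness suffice uniformly over all $v$, including the borderline case $v \in S$, where $d(v,s)$ may equal $0$. Consequently the statement follows immediately from the definition, and I would expect the written proof to occupy only a line or two.
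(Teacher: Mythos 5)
Your proof is correct and follows essentially the same argument as the paper: for each vertex $v$, domination supplies a witness $s \in S$ with $d(v,s) \leq 1$, while $u \notin S$ gives $d(u,s) \geq 1$, establishing the defining inequality of $\qcc_G(S)$. The only cosmetic difference is that you name the witness as a nearest vertex to $v$, whereas the paper simply takes any $t_v \in S$ within distance one of $v$; the substance is identical.
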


\begin{proof}
	For any vertex $v$ in $V$, we have $d(v,S) \leq 1$, so there exists $t_v$ in $S$ such that $d(v, t_v) \leq 1$.
	Since $u$ is not in $S$, it satisfies $d(u,t_v) \geq 1$. 
	As $v$ was arbitrary, we conclude that $u$ is quasi-eccentric to $S$.
\end{proof}

\begin{thm}
	If a maximal planar graph $H$ has at most eight vertices, then there exists some maximal planar graph $G$ such that $H$ is a subgraph of the center of $G$.
	\label{thm_order_eight}
\end{thm}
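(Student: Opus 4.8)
The plan is to reduce everything to the quasi-eccentric face criterion. By Theorem \ref{thm_mpg_qef_crit} together with Theorem \ref{thm_equi_subgraph_center}, it suffices to show that every vertex $u$ of $H$ lies in $\qcc_H(H[f])$ for some face $f$ of $H$; this already embeds $H$ in the center of the maximal plane graph produced by the construction of Theorem \ref{thm_mpg_qef_crit}. Graphs of order at most $3$ are self-centered, hence their own centers, so assume $4 \le |V(H)| \le 8$. Then $H$ is $3$-connected, so Lemma \ref{lem_order_kappa_diam} yields $8 \ge |V(H)| \ge 2 + 3(\text{diam}(H)-1)$, giving $\text{diam}(H) \le 3$. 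I would then split on the eccentricity of $u$.

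The peripheral vertices are free: if $e_H(u) = \text{diam}(H)$, pick $v$ with $d(u,v) = \text{diam}(H) = e_H(v)$ and any face $f$ with $v$ on its boundary; since $u$ is an eccentric vertex of $v \in V(H[f])$, Observation \ref{obs_all_ecc_are_qcc} gives $u \in \qcc_H(H[f])$. Hence only non-peripheral vertices, of eccentricity $1$ or $2$, remain, and a self-centered $H$ (radius $=$ diameter) is already done. A vertex of eccentricity $1$ is universal, occurring only when $\text{rad}(H)=1$; in that case $H$ consists of a single universal vertex together with a maximal outerplanar graph $M$ on the other $\le 7$ vertices, and I would produce a dominating face of $H$ avoiding the universal vertex and apply Observation \ref{obs_qcc_dom} (or invoke Lemma \ref{lem_domface} directly). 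The handful of graphs with two or more universal vertices --- namely $K_4$, $K_5$ with one edge removed, and the ``double fans'' obtained by joining two adjacent vertices to a path --- are checked directly, again via Observation \ref{obs_qcc_dom}.

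The crux is the case $\text{rad}(H)=2$, $\text{diam}(H)=3$, with $u$ a central vertex of eccentricity $2$; note that such a $u$ need not be an eccentric vertex of any vertex, so the genuine strength of quasi-eccentricity is needed here. I would study the distance layers $N_i = \{x : d(u,x)=i\}$, so that $V(H) = \{u\} \cup N_1 \cup N_2$ with $N_2$ nonempty; since $N_1$ separates $u$ from $N_2$ and $H$ is $3$-connected, $|N_1| \ge 3$, while $|N_1| + |N_2| \le 7$. The target is a face $f:x,y,z$ admitting no vertex $v$ with $d(v,x)>d(u,x)$, $d(v,y)>d(u,y)$ and $d(v,z)>d(u,z)$ simultaneously; as all distances lie in $\{0,1,2,3\}$, such a witness would have to sit at distance $3$ from every boundary vertex already at distance $2$ from $u$. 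I would hunt for either a dominating face avoiding $u$ (then Observation \ref{obs_qcc_dom} finishes) or a face incident to $u$ whose other two vertices dominate $V(H)$. I expect this to be the main obstacle: ruling out the simultaneous failure of the criterion at \emph{every} face under the tight budget $|N_1| + |N_2| \le 7$, by combining $3$-connectivity (lower bounds on separator and layer sizes) with planarity (upper bounds on the number and incidences of triangular faces) to force $|V(H)| \ge 9$. The order-$9$ configuration underlying the sharpness of the bound (in the spirit of Figure \ref{fig_cycle_not_qef}) shows that this budget cannot be loosened, so the counting must be essentially tight.
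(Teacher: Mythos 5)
Your overall strategy is the same as the paper's: reduce to the quasi-eccentric face criterion via Theorem \ref{thm_mpg_qef_crit} and Theorem \ref{thm_equi_subgraph_center}, dispose of peripheral vertices through Observation \ref{obs_all_ecc_are_qcc}, bound the diameter by Lemma \ref{lem_order_kappa_diam}, and split on the radius. The easy parts (small orders, peripheral vertices, graphs with two or more universal vertices) are handled correctly. However, the two cases that carry essentially all of the difficulty are left as assertions, so there are genuine gaps. The first is the radius-1 case with a unique universal vertex $v$: you simply assert that a dominating face of $H$ avoiding $v$ exists. This is exactly where the hypothesis $|V(H)|\le 8$ does its work, and it cannot be waved through: the neighborhood of $v$ is a cycle $C$ of length $k$ with $4\le k\le 7$, and the paper's proof of this step is a chord analysis --- for $k\in\{4,5\}$ the faces on any chord dominate; for $k=6$ either a 3-chord exists or the interior is triangulated by three 2-chords whose triangle bounds a dominating face; for $k=7$ one must first prove that a triangulation of the interior of a 7-cycle necessarily contains a 3-chord, and then that a face on that 3-chord dominates. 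That this step genuinely needs the order bound is shown by the graph $G_9$ of Theorem \ref{thm_mpg_order_nine}: there the universal vertex's neighborhood is an 8-cycle and no face works, so the claim you assert is precisely the part of the proof that can fail.

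The second gap is the radius-2, diameter-3 case, which you yourself flag as ``the main obstacle'': you propose to show, by combining 3-connectivity and planarity counting, that simultaneous failure of the criterion at every face forces $|V(H)|\ge 9$, but you do not carry this out, and it is not clear the counting closes. The paper takes a different and fully concrete route here: it layers from a vertex $u$ with $d(u,v)=3$ (an endpoint of a diametral pair, rather than from a central vertex as you do), which together with 3-connectivity pins down the structure rigidly --- $|V(H)|=8$, $|N_1|=|N_2|=3$, $N_3=\{v\}$, with $N(u)$ and $N(v)$ inducing triangles; Hall's Marriage Theorem then produces a perfect matching between $N(u)$ and $N(v)$, planarity of the resulting 3-connected spanning subgraph $K$ restricts the three remaining edges to one from each of three crossing pairs, and up to relabeling $H$ is one of exactly two graphs $H_1$ and $H_2$, for which the quasi-eccentric face criterion is verified directly using Observation \ref{obs_qcc_dom}. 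Without either that classification or a completed version of your counting argument, the central-vertex case (where, as you correctly note, quasi-eccentricity is genuinely stronger than eccentricity) remains unproven.
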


\begin{proof}
	Let $H=(V,E)$ be a maximal planar graph such that $|V| \leq 8$. 
	Let $r$ and $d$ be the radius and diameter of $H$, respectively.
	All maximal planar graphs of order at most four are self-centered, so we may assume that $|V| \geq 5$.
	All maximal planar graphs of order at least five are three-connected, thus by Lemma \ref{lem_order_kappa_diam}, the diameter of $H$ satisfies $d \leq \frac{|V| + 1}{3} = 3$. 
	Since $K_n$ is non-planar for any $n\geq 5$, we may assume that the diameter of $H$ is at least two.
	If $H$ is self-centered, the Theorem holds trivially, so we assume $r<d$.
	
	\textit{Case 1: $H$ has radius 1 and diameter 2.}\\
	In this case, $H$ has at least one universal vertex.
	We break Case 1 into subcases, based on the number of universal vertices in $H$.
	
	\textit{Case 1.1:} $H$ has three (or more) universal vertices.\\
	Let $S$ be a set of three universal vertices. If $H-S$ has at least three vertices, then $H$ contains $K_{3,3}$ as subgraph, contradicting the planarity of $H$.
	If $H-S$ contains two adjacent vertices, then $H$ contains $K_5$ as a subgraph, contradicting planarity.
	Thus $H$ is $K_3 + \overline{K_2}$. 
	Figure \ref{fig_k3k2} shows that $K_3+\overline{K_2}$ is a subgraph of a self-centered maximal planar graph.
	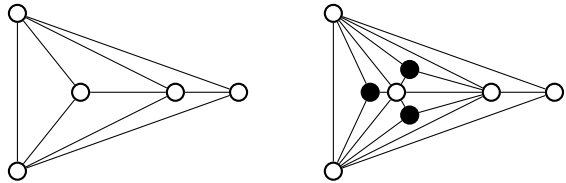
\begin{figure}[h]
\centering
	\begin{tikzpicture}
	[scale = 0.7, inner sep=0.8mm, 
	vertex/.style={circle,thick,draw}, 
	thickedge/.style={line width=1.5pt}] 
	
	\node[vertex](1) at (0,0) {};
	\node[vertex](2) at (1.8,0) {};
	\node[vertex](3) at (-1.2,1.5) {};
	\node[vertex](4) at (-1.2,-1.5) {};
	\node[vertex](5) at (3,0) {};
	
	\draw (2)--(3)--(4)--(2) (1)--(2) (1)--(3) (1)--(4) (5)--(2) (5)--(3) (5)--(4);
	
	\begin{scope}[shift={(6,0)}]
	
	\node[vertex](1) at (0,0) {};
	\node[vertex](2) at (1.8,0) {};
	\node[vertex](3) at (-1.2,1.5) {};
	\node[vertex](4) at (-1.2,-1.5) {};
	\node[vertex](5) at (3,0) {};
	
	\draw (2)--(3)--(4)--(2) (1)--(2) (1)--(3) (1)--(4) (5)--(2) (5)--(3) (5)--(4);
	
	\node[vertex, fill=black] (a) at (60:0.5) {};
	\node[vertex, fill=black] (b) at (180:0.5) {};
	\node[vertex, fill=black] (c) at (300:0.5) {};
	\end{scope}
	
	\draw (a)--(1) (a)--(2) (a)--(3);
	\draw (b)--(1) (b)--(3) (b)--(4);
	\draw (c)--(1) (c)--(2) (c)--(4);
	
	\end{tikzpicture}
	\caption{On the left is the graph $H = K_3 + \overline{K_2}$. On the right is a self-centered maximal planar graph $G$ containing $H$ as a subgraph.}
	\label{fig_k3k2}
\end{figure}
	
	\textit{Case 1.2:} $H$ has exactly two universal vertices.\\
	Let $u$ and $v$ be the two universal vertices of $H$.
	Since $H$ is a maximal planar graph of order at least five, the set $V-\{u,v\}$ contains a pair of adjacent vertices $x$ and $y$.
	The triangles $u,x,y$ and $v,x,y$ bound faces of $H$.
	Create a self-centered maximal planar graph $G$ containing $H$ as a subgraph as follows: 
	Add a vertex $s$ to the face bounded by $u,x,y$, and make $s$ adjacent to $u$, $x$ and $y$. 
	Similarly, add a vertex $t$ to the face bounded by $v,x,y$ and make $t$ adjacent to $v$, $x$ and $y$.
	Every vertex of $G-\{s,t\}$ is adjacent to $u$ and $v$, and hence every vertex of $G$ has eccentricity at most two.
	The only vertices ($x$ and $y$) adjacent to both $s$ and $t$ are not universal, so every vertex of $G$ has eccentricity at least two.
	Thus $G$ is self-centered. 
	
	\textit{Case 1.3:} $H$ has exactly one universal vertex.\\
	Let $v$ be the single universal vertex of $H$. 
	Since $H$ is a maximal planar graph, there is a cycle $C: u_1, u_2, \dots, u_k, u_1$ with $V(C) = N(v)$, and $4 \leq k \leq 7$. 
	Embed $H$ in the plane such that $v$ lies in the exterior region of $C$, and any chords of $C$ lie in the interior region. 
	Note that any face in the interior of $C$ does not contain $v$ in its boundary.
	In light of Lemma \ref{lem_domface}, it suffices to show that some face in the interior of $C$ dominates $V(C)$.
	
	If $k$ is either four or five, and $xy$ is a chord of $C$, then every vertex of $C$ is adjacent to $x$ or $y$. 
	Thus $V(C)$ is dominated by either of the faces incident with $xy$.
	As such, we may assume that $k$ is either six or seven.
	
	If $k=6$, and $u_iu_{i+3}$ is a 3-chord of $C$, then either of the faces incident with $u_iu_{i+3}$ dominate $C$. 
	So we assume that $C$ has no 3-chord. 
	Up to symmetry, the only way to triangulate the interior of $C$ without any 3-chords, is to create three 2-chords $u_1u_3$, $u_3u_5$ and $u_5u_1$.
	But then the triangular face $u_1, u_3, u_5$ dominates $V(C)$, so we assume that $k=7$ (see Figure \ref{fig_6cycle_chord}).
	
	\begin{figure}[h]
\centering
	\begin{tikzpicture}
	[scale = 0.7, inner sep=0.8mm, 
	vertex/.style={circle,thick,draw}, 
	thickedge/.style={line width=1.5pt}] 
	
	\node[vertex, fill = black] (0) at (0:2) [label=0:{$u_{i+3}$}] {};
	\node[vertex] (1) at (60:2) {};
	\node[vertex] (2) at (120:2) {};
	\node[vertex, fill = black] (3) at (180:2) [label=180:{$u_i$}] {};
	\node[vertex] (4) at (240:2) {};
	\node[vertex] (5) at (300:2) {};
	
	\draw[thickedge] (0)--(3);
	\draw (0)--(1)--(2)--(3)--(4)--(5)--(0);
	
	\begin{scope}[shift={(7,0)}]
	\node[vertex, fill = black] (0) at (0:2) [label=0:{$u_1$}] {};
	\node[vertex] (1) at (60:2) {};
	\node[vertex, fill = black] (2) at (120:2) [label=120:{$u_3$}] {};
	\node[vertex] (3) at (180:2) {};
	\node[vertex, fill = black] (4) at (240:2) [label=240:{$u_5$}] {};
	\node[vertex] (5) at (300:2) {};
	
	\draw[thickedge] (0)--(2)--(4)--(0);
	\draw (0)--(1)--(2)--(3)--(4)--(5)--(0);
	\end{scope}
	
	\end{tikzpicture}
	\caption{On the left is the cycle $C$ with a 3-chord, $u_iu_{i+3}$. Note that $\{u_i, u_{i+3}\}$ dominates $V(C)$.
	On the right is the cycle $C$ with three 2-chords. The set $\{u_1, u_3, u_5\}$ dominates $V(C)$.}
	\label{fig_6cycle_chord}
\end{figure}
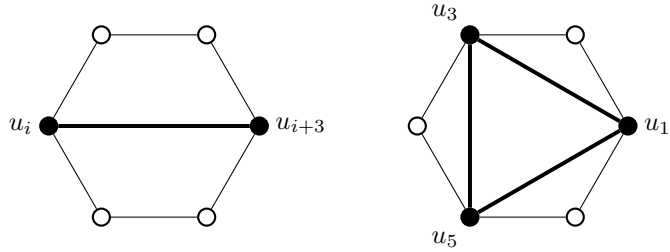
	
	We claim that the interior of $C: u_1, u_2, \dots, u_7, u_1$ cannot be triangulated by only 2-chords.
	Assume to the contrary that the interior of $C$ is triangulated, and that every edge in the interior of $C$ is a 2-chord of $C$.
	Up to a cyclic re-labeling of the vertices of $C$, we may assume $u_1u_3$ is a chord of $C$. 
	This chord lies on the boundary of two triangular faces of $H$, the face $u_1,u_2,u_3$, and another face $u_1, u_i, u_3$, for some $i$ in $\{4,5,6,7\}$. 
	If $i \leq 5$, then $u_1u_i$ is not a 2-chord. 
	If $i \geq 6$, then $u_3u_i$ is not a 2-chord. 
	In any case, we obtain a contradiction, proving the claim. 
	
	Thus if $k=7$, then $H$ must contain some 3-chord of $C$. 
	Up to re-labeling of the vertices of $C$, we may call this chord $u_1u_4$. 
	The chord $u_1u_4$ lies on the boundary of two triangular faces of $H$, one of which contains a vertex $u_j$, for $j$ in $\{5,6,7\}$. 
	For any choice of $j$ in $\{5,6,7\}$, the vertex $u_j$ either is, or is adjacent to $u_6$, and so the face $u_1,u_j,u_4$ dominates $V(C)$, completing the proof of Case 1.3.
	
	\textit{Case 2: $H$ has radius two and diameter three.}\\
	Let $u$ and $v$ be vertices of $H$ such that $d(u,v) = 3$, and let $N_i =\{x\in V : d(u,x) = i \}$.
	Per Lemma \ref{lem_order_kappa_diam} and the fact that $H$ is 3-connected, the order of $H$ is exactly eight.
	Further, since $N_1$ and $N_2$ are both $u-v$ separators, we have that $|N_1| \geq 3$ and $|N_2| \geq 3$.
	From these facts, it follows that $|N_1| = 3$, $|N_2| = 3$ and $N_3 = \{v\}$.
	We thus label vertices $N_1 = \{x_1, x_2, x_3\}$ and $N_2 = \{y_1, y_2, y_3\}$.
	It is clear that $N(u) = N_1$, but also note that $N(v) = N_2$.
	Certainly every neighbor of $v$ must belong to $N_2$, and were some vertex of $N_2$ not adjacent to $v$, then $N_2\cap N(v)$ would be a $u-v$ separator with fewer than three vertices, contradicting the 3-connectivity of $H$.
	
	Since $H$ is a maximal planar graph, both $N(u)$ and $N(v)$ must induce cycles in $H$.
	We claim that up to some relabeling of vertices, the edges $x_1y_1$, $x_2y_2$ and $x_3y_3$ all belong to $H$.
	It suffices to find a perfect matching of the bipartite subgraph $B$ that has vertex set $N(u) \cup N(v)$ and edge set $\{xy \in E : x\in N(u), y\in N(v) \}$.
	
	By Hall's Marriage Theorem \cite{hall:hall_marriage}, we need only show that for any set $S\subset N(v)$, we have $|S| \leq |N(S) \cap N(u)|$. 
	Because $N(v) = N_2$, every vertex of $N(v)$ is adjacent to some vertex of $N_1 = N(u)$, so $|\{y_i\}| \leq |N(y_i) \cap N(u)|$ for any singleton $\{y_i\}$ of $N(v)$.
	We claim it is not possible that both $|S| = 2$, and $|N(S) \cap N(u)| = 1$.
	Assume to the contrary it is, and let $x_k$ be the single vertex of $N(S) \cap N(u)$. 
	Let $y_j$ be the single vertex of $N(v)-S$, and observe that $\{x_k, y_j\}$ is a $u-v$ separator with only two vertices, contradicting the 3-connectivity of $H$ and proving the claim.
	It remains to show that if $|S| = 3$, then $|N(u) \cap N(S)| = 3$.
	Assume to the contrary that $S = N(v)$, but that the set $N(u) \cap N(S)$ has fewer than three vertices. 
	Then $N(u) \cap N(S)$ is a $u-v$ separator with fewer than three vertices, contradicting the 3-connectivity of $H$.
	Since $|S| \leq |N(S) \cap N(u)|$ for any $S\subseteq N(v)$, the subgraph $B$ of $H$ has a perfect matching. 
	Thus, up to some relabeling of the vertices, $H$ contains the edges $x_1y_1$, $x_2y_2$ and $x_3y_3$.
	
	We deduce that $H$ contains a subgraph $K$ which consists of two disjoint copies of $K_4$, joined by three disjoint edges (See Figure \ref{fig_k_construction}). 
	
	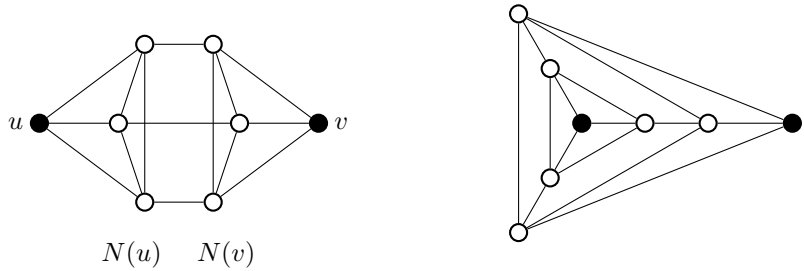
\begin{figure}[h]
\centering
	\begin{tikzpicture}
	[scale = 0.7, inner sep=0.8mm, 
	vertex/.style={circle,thick,draw}, 
	thickedge/.style={line width=1.5pt}] 
	
	\node[vertex, fill=black] (u) at (-0.3,0) [label=180:{$u$}] {};
	\node[vertex] (x1) at (1.7,1.5) {};
	\node[vertex] (x2) at (1.2,0) {};
	\node[vertex] (x3) at (1.7,-1.5) {};
	
	\node[vertex] (y1) at (3,1.5) {};
	\node[vertex] (y2) at (3.5,0) {};
	\node[vertex] (y3) at (3,-1.5) {};
	\node[vertex, fill=black] (v) at (5,0) [label=0:{$v$}] {};
	
	\node at (1.45, -2.5) {$N(u)$};
	\node at (3.25, -2.5) {$N(v)$};
	
	\draw (u)--(x1) (u)--(x2) (u)--(x3) (v)--(y1) (v)--(y2) (v)--(y3);
	\draw (x1)--(x2)--(x3)--(x1) (y1)--(y2)--(y3)--(y1);
	\draw (x1)--(y1) (x2)--(y2) (x3)--(y3);
	
	\begin{scope}[shift={(10,0)}]
		\node[vertex, fill=black] (u) at (0:0) {};
		\node[vertex] (x1) at (0:1.2) {};
		\node[vertex] (x2) at (120:1.2) {};
		\node[vertex] (x3) at (240:1.2) {};
		
		\node[vertex] (y1) at (0:2.4) {};
		\node[vertex] (y2) at (120:2.4) {};
		\node[vertex] (y3) at (240:2.4) {};
		
		\node[vertex, fill=black] (v) at (0:4) {};
		
		\draw (u)--(x1) (u)--(x2) (u)--(x3) (v)--(y1) (v)--(y2) (v)--(y3);
		\draw (x1)--(x2)--(x3)--(x1) (y1)--(y2)--(y3)--(y1);
		\draw (x1)--(y1) (x2)--(y2) (x3)--(y3);
	\end{scope}
	
	\end{tikzpicture}
	\caption{On the left is the spanning subgraph $K$ of $H$. On the right is a plane drawing of $K$ with no edges crossing. Since $K$ is 3-connected, any embedding of $K$ in the plane induces faces with the same boundaries.}
	\label{fig_k_construction}
\end{figure}
	
	It is easy to see that $K$ is 3-connected, and thus any two embeddings of $K$ as a plane graph induce the same set of faces boundaries \cite{whitney:congconn}.
	Since $H$ is planar, any edge of $H-K$ must be between two edges on the same face of $K$.
	Thus $H-K$ contains exactly three edges, one from each of the following three pairs: $\{x_1y_2, x_2y_1\}$, $\{x_2y_3, x_3y_2\}$ and $\{x_3y_1, x_1y_3\}$.
	Up to relabeling of the vertices, $H$ is one of just two possible graphs $H_1$ and $H_2$.
	The graph $H_1$ contains edges $x_1y_2$, $x_2y_3$ and $x_3y_1$, while $H_2$ contains $x_1y_2$, $x_3y_2$ and $x_3y_1$ (See Figure \ref{fig_h1_and_h2}).
	
	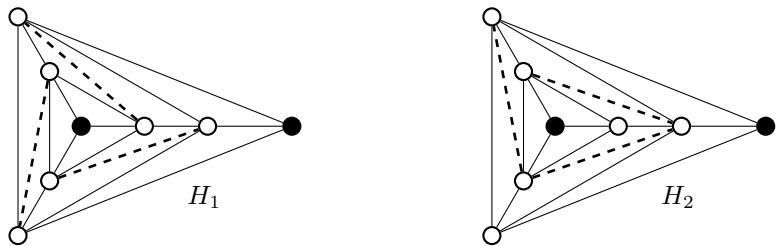
\begin{figure}[h]
\centering
	\begin{tikzpicture}
	[scale = 0.7, inner sep=0.8mm, 
	vertex/.style={circle,thick,draw}, 
	thickedge/.style={line width=1pt}] 
	
	\node[vertex, fill=black] (u) at (0:0) {};
	\node[vertex] (x1) at (0:1.2) {};
	\node[vertex] (x2) at (120:1.2) {};
	\node[vertex] (x3) at (240:1.2) {};
	
	\node[vertex] (y1) at (0:2.4) {};
	\node[vertex] (y2) at (120:2.4) {};
	\node[vertex] (y3) at (240:2.4) {};
	
	\node[vertex, fill=black] (v) at (0:4) {};
	
	\draw (u)--(x1) (u)--(x2) (u)--(x3) (v)--(y1) (v)--(y2) (v)--(y3);
	\draw (x1)--(x2)--(x3)--(x1) (y1)--(y2)--(y3)--(y1);
	\draw (x1)--(y1) (x2)--(y2) (x3)--(y3);
	
	\draw[thickedge, dashed] (x1)--(y2) (x2)--(y3) (x3)--(y1);
	\node at (-30:2.7) {$H_1$};
	
	\begin{scope}[shift={(9,0)}]
		\node[vertex, fill=black] (u) at (0:0) {};
		\node[vertex] (x1) at (0:1.2) {};
		\node[vertex] (x2) at (120:1.2) {};
		\node[vertex] (x3) at (240:1.2) {};
		
		\node[vertex] (y1) at (0:2.4) {};
		\node[vertex] (y2) at (120:2.4) {};
		\node[vertex] (y3) at (240:2.4) {};
		
		\node[vertex, fill=black] (v) at (0:4) {};
		
		\draw (u)--(x1) (u)--(x2) (u)--(x3) (v)--(y1) (v)--(y2) (v)--(y3);
		\draw (x1)--(x2)--(x3)--(x1) (y1)--(y2)--(y3)--(y1);
		\draw (x1)--(y1) (x2)--(y2) (x3)--(y3);
		
		\draw[thickedge, dashed] (x3)--(y2) (x2)--(y1) (x3)--(y1);
		\node at (-30:2.7) {$H_2$};
	\end{scope}
	
	\end{tikzpicture}
	\caption{On the left is the graph $H_1$, and on the right is $H_2$. The edges which do not belong to $K$ are indicated by dashed lines.}
	\label{fig_h1_and_h2}
\end{figure}
	
	We use Theorem \ref{thm_mpg_qef_crit} to complete the proof.
	We need to show that every vertex of $H_1$, other than $u$ and $v$, is quasi-eccentric to some face of $H_1$, and likewise for $H_2$. 
	
	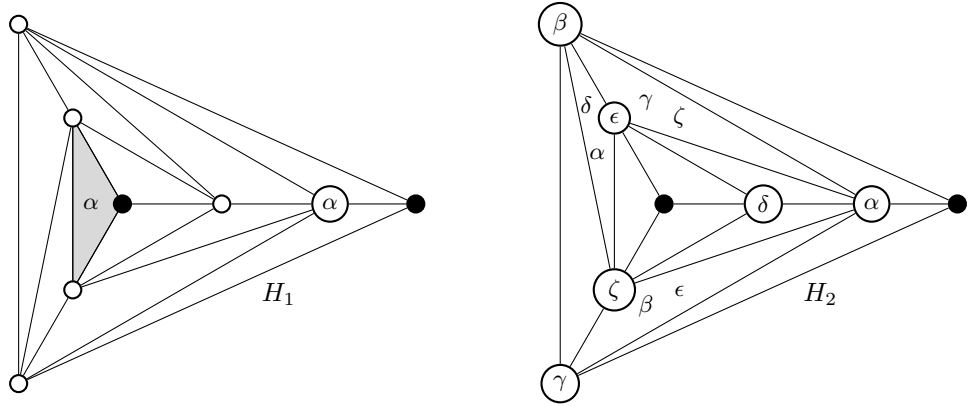
\begin{figure}[h]
\centering
	\begin{tikzpicture}
	[scale = 0.6, inner sep=0.8mm, 
	vertex/.style={circle,thick,draw}, 
	thickedge/.style={line width=1pt}] 
	
	\draw[fill=black!15] (0:0)--(120:2.2)--(240:2.2)--(0:0);
	
	\node[vertex, fill=black] (u) at (0:0) {};
	\node[vertex] (x1) at (0:2.2) {};
	\node[vertex, fill=white] (x2) at (120:2.2) {};
	\node[vertex, fill=white] (x3) at (240:2.2) {};
	
	\node[vertex] (y1) at (0:4.6) {\small $\alpha$};
	\node[vertex] (y2) at (120:4.6) {};
	\node[vertex] (y3) at (240:4.6) {};
	
	\node[vertex, fill=black] (v) at (0:6.5) {};
	
	\draw (u)--(x1) (u)--(x2) (u)--(x3) (v)--(y1) (v)--(y2) (v)--(y3);
	\draw (x1)--(x2)--(x3)--(x1) (y1)--(y2)--(y3)--(y1);
	\draw (x1)--(y1) (x2)--(y2) (x3)--(y3);
	
	\draw (x1)--(y2) (x2)--(y3) (x3)--(y1);
	
	\node at (-0.7, 0) {\small $\alpha$};
	
	\node at (-30:4.0) {$H_1$};
	
	\begin{scope}[shift={(12,0)}]
		\node[vertex, fill=black] (u) at (0:0) {};
		\node[vertex] (x1) at (0:2.2) {\small $\delta$};
		\node[vertex, fill=white] (x2) at (120:2.2) {\small $\epsilon$};
		\node[vertex, fill=white] (x3) at (240:2.2) {\small $\zeta$};
		
		\node[vertex] (y1) at (0:4.6) {\small $\alpha$};
		\node[vertex] (y2) at (120:4.6) {\small $\beta$};
		\node[vertex] (y3) at (240:4.6) {\small $\gamma$};
		
		\node[vertex, fill=black] (v) at (0:6.5) {};
		
		\draw (u)--(x1) (u)--(x2) (u)--(x3) (v)--(y1) (v)--(y2) (v)--(y3);
		\draw (x1)--(x2)--(x3)--(x1) (y1)--(y2)--(y3)--(y1);
		\draw (x1)--(y1) (x2)--(y2) (x3)--(y3);
		
		\draw (x3)--(y2) (x3)--(y1) (x2)--(y1);
		
		\node at (143:1.85) {\small $\alpha$};
		\node at (128:2.8) {\small $\delta$};
		
		\node at (100:2.3) {\small $\gamma$};
		\node at (80:1.95) {\small $\zeta$};
		
		\node at (260:2.3) {\small $\beta$};
		\node at (280:1.95) {\small $\epsilon$};
		
		\node at (-30:4.0) {$H_2$};
	\end{scope}
	
	\end{tikzpicture}
	\caption{In the graph $H_1$, the vertex marked $\alpha$ is quasi-eccentric to the darkened face marked $\alpha$. In the graph $H_2$, a face and a vertex share a label if the vertex is quasi-eccentric to the face.}
	\label{fig_h1_faces}
\end{figure}
	
	Observe Figure \ref{fig_h1_faces}.
	In $H_1$, the vertex labeled $\alpha$ is quasi-eccentric to the face labeled $\alpha$. 
	By symmetry, we thus see that every vertex of $H_1$ which has eccentricity at most two is quasi-eccentric to some face of $H_1$. 
	In $H_2$, a vertex is quasi-eccentric to a face if both the vertex and the face have the same label (e.g., the vertex $\delta$ is quasi-eccentric to the face with labels $\delta$ and $\alpha$).
	This is easy to check, as each face with a label in it dominates all of $V(H)$, and if a face and a vertex share a label, they are distance one apart, so quasi-eccentricity follows by Observation \ref{obs_qcc_dom}.
\end{proof}

We will show that the bound in Theorem \ref{thm_order_eight} is sharp, by exhibiting a maximal planar graph on nine vertices which fails the condition of Theorem \ref{thm_mpg_qef_crit}.

\begin{thm}
	There exists a maximal planar graph of order nine which is not a subgraph of the center of any maximal planar graph.
	\label{thm_mpg_order_nine}
\end{thm}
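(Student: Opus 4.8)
The plan is to derive the result from the necessary direction of the quasi-eccentric face criterion. If $H$ were a subgraph of the center of some maximal planar graph $G$, then $V(H)$ would be equi-eccentric in $G$, say $V(H)\subseteq \fe_G(\alpha)$ with $\alpha=\mathrm{rad}(G)$; since $H$ is maximal planar it is isometric in $G$ by Lemma~\ref{lem:mpgiso}, whence $\mathrm{diam}(H)\le\alpha$. Corollary~\ref{cor_qef_mpg_nec} would then force every vertex $u$ with $e_H(u)<\alpha$ to lie in $\qcc_H(H[f])$ for some face $f$. It therefore suffices to construct a maximal plane graph $H$ of order $9$ possessing a single \emph{non-peripheral} vertex that is quasi-eccentric to no face: such a vertex satisfies $e_H(u)<\mathrm{diam}(H)\le\alpha$, so the criterion is violated simultaneously for every admissible $\alpha$, and no such $G$ can exist.

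For the construction I would take $H$ to be the wheel $W_8$: a hub $v$ joined to every vertex of a rim cycle $c_1c_2\cdots c_8c_1$, with the region outside the rim triangulated by five non-crossing chords. Since the neighbours of a vertex in a triangulation form a cycle, a degree-$8$ (hence universal) vertex in an order-$9$ triangulation forces exactly this shape, the only freedom being the choice of chords. I would use the zigzag chords $c_2c_8,\,c_2c_7,\,c_3c_7,\,c_3c_6,\,c_4c_6$, producing the six exterior faces $c_1c_2c_8$, $c_2c_7c_8$, $c_2c_3c_7$, $c_3c_6c_7$, $c_3c_4c_6$, $c_4c_5c_6$ together with the eight hub faces $vc_ic_{i+1}$. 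This $H$ has $8+8+5=21$ edges, so it is maximal planar, and being of order at least $4$ it is $3$-connected, so its face set is well defined.

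Next I would record the computation. As $v$ is universal we have $e_H(v)=1$, while $\mathrm{diam}(H)=2$ because non-adjacent rim vertices such as $c_1$ and $c_4$ are at distance $2$; hence $v$ is non-peripheral. The key observation is that, since $d(v,v)=0$ and $d(v,c_i)=1$ for every rim vertex, the coordinate at $v$ can never beat $d(w,v)=1$, so the hub $v$ is quasi-eccentric to a face $f$ \emph{if and only if} the rim vertices lying on $f$ dominate $V$. One then checks the finitely many faces: for each rim edge $c_ic_{i+1}$ the set $N[c_i]\cup N[c_{i+1}]$ omits a rim vertex, and for each of the six exterior triangles the union of the three closed neighbourhoods likewise omits one (for example $c_2c_3c_7$ misses $c_5$, and $c_4c_5c_6$ misses $c_1,c_2,c_8$). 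Thus no face dominates $V$, so $v\notin\qcc_H(H[f])$ for every face $f$.

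Combining these facts, $v$ is a non-peripheral vertex of the maximal plane graph $H$ quasi-eccentric to no face, and by the reduction of the first paragraph $H$ is not contained in the center of any maximal planar graph, proving the theorem. The only genuine difficulty lies in the chord selection: highly symmetric triangulations such as a fan from one rim vertex produce a rim vertex whose closed neighbourhood is too large, so that its incident faces do dominate $V$ and $v$ becomes quasi-eccentric to them. The task is to choose a triangulation keeping every closed neighbourhood small enough that no rim edge and no chord triangle covers all eight rim vertices; the zigzag above achieves this, as confirmed by the short case analysis just outlined.
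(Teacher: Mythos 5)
Your proposal is correct and takes essentially the same route as the paper: the graph you build (the wheel $W_8$ with a zigzag triangulation of the region outside the rim) is isomorphic to the paper's graph $G_9$, and both arguments conclude by showing the universal hub is a non-peripheral vertex that is quasi-eccentric to no face, then invoking the necessity direction of the quasi-eccentric face criterion. The only cosmetic difference is in the finite check: the paper exhibits two witness vertices (one beating the hub on every face inside a distinguished triangle, one on every face outside it), while you reformulate quasi-eccentricity of the hub as domination of $V$ by the rim vertices of the face and verify that no face dominates --- both verifications are routine and equivalent.
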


\begin{proof}
	Consider the graph $G_9$ in Figure \ref{fig_mpg_order_nine}.
	The diameter of $G_9$ is two, and $e(u) = 1$. 
	Thus, by Theorem \ref{thm_mpg_qef_crit}, it suffices to show that $u$ is not quasi-eccentric to any face of $G_9$.
	
	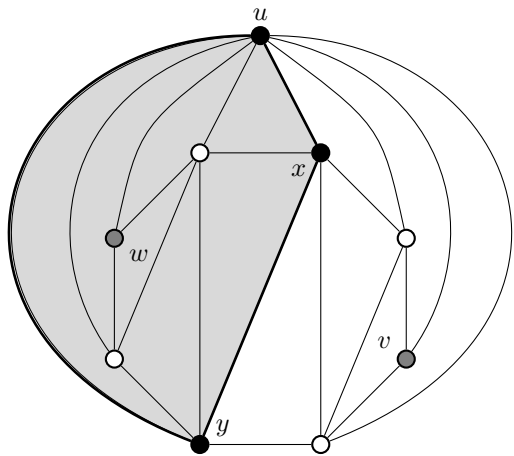
\begin{figure}[h]
\centering
	\begin{tikzpicture}
	[scale = 0.7, inner sep=0.8mm, 
	vertex/.style={circle,thick,draw}, 
	thickedge/.style={line width=1pt}] 
	
	\draw[fill=black!15] (0,5)--({1*45 + 22.5}:3)--({5*45 + 22.5}:3) .. controls (-7,-0.5) and (-5,5) .. (0,5);
	
	\node[vertex, fill=white] (0) at ({0*45 + 22.5}:3) {};
	\node[vertex, fill=black] (1) at ({1*45 + 22.5}:3) [label=213:{$x$}] {};
	\node[vertex, fill=white] (2) at ({2*45 + 22.5}:3) {};
	\node[vertex, fill=black!50] (3) at ({3*45 + 22.5}:3) [label=-30:{$w$}] {};
	\node[vertex, fill=white] (4) at ({4*45 + 22.5}:3) {};
	\node[vertex, fill=black] (5) at ({5*45 + 22.5}:3) [label=10:{$y$}] {};
	\node[vertex, fill=white] (6) at ({6*45 + 22.5}:3) {};
	\node[vertex, fill=black!50] (7) at ({7*45 + 22.5}:3) [label=150:{$v$}] {};
	
	\node[vertex, fill=black] (a) at (0, 5) [label=90:{$u$}] {};
	
	\draw (0)--(1)--(2)--(3)--(4)--(5)--(6)--(7)--(0);
	
	\draw (1)--(a) (2)--(a);
	
	\draw (0) .. controls (2.3,3.3) .. (a);
	\draw (3) .. controls (-2.3,3.3) .. (a);
	
	\draw (7) .. controls (4.3,1) and (3.8,4.2) .. (a);
	\draw (4) .. controls (-4.3,1) and (-3.8,4.2) .. (a);
	
	\draw (6) .. controls (7,-0.5) and (5,5) .. (a);
	\draw (5) .. controls (-7,-0.5) and (-5,5) .. (a);
	
	\draw (0)--(6)--(1)--(5)--(2)--(4);
	
	\draw[thickedge] (a)--(1)--(5);
	\draw[thickedge] (5) .. controls (-7,-0.5) and (-5,5) .. (a);
	
	\end{tikzpicture}
	\caption{The graph $G_9$ used in the proof of Theorem \ref{thm_mpg_order_nine}. The vertex $v$ is further from any vertex of a shaded face than $u$ is, and $w$ is further from any vertex of an unshaded face than $u$ is.}
	\label{fig_mpg_order_nine}
\end{figure}
	
	If $f$ is any shaded face in the interior of the bold cycle with vertex set $\{u,x,y\}$, then for all vertices $t$ in the boundary of $f$, we have that $d(v,t) > d(u,t)$. 
	If $f$ is an unshaded face in the exterior of the bold cycle (including the outer face of $G_9$), then all vertices $t$ in the boundary of $f$ satisfy $d(w,t) > d(u,t)$.
	Thus the vertex $u$ is not quasi-eccentric to any face, so $G_9$ is not contained in the center of any mpg.
\end{proof}

Using Corollary \ref{cor_centre_iff_equi}, we derive the following corollary of Theorems \ref{thm_order_eight} and \ref{thm_mpg_order_nine}.

\begin{cor}
	Every maximal planar graph of of order 8 or less is the center of some planar graph, and this bound is sharp.
	\label{cor_order_8_center}
\end{cor}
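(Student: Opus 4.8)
The plan is to read off the corollary from the two preceding theorems by way of the characterisation in Corollary~\ref{cor_centre_iff_equi}, which converts the property ``is the center of some planar graph'' into the more tractable property ``is an equi-eccentric subgraph of some maximal plane graph.'' Both halves of the corollary then reduce to applying this equivalence to the relevant input theorem.

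For the positive direction, I would start with an arbitrary maximal planar graph $H$ of order at most $8$. By Theorem~\ref{thm_order_eight}, $H$ is a subgraph of the center of some maximal planar graph $G$. The center of any graph is an equi-eccentric subgraph, since every central vertex shares the radius as its eccentricity; because $H$ is contained in this center, every vertex of $H$ has one and the same eccentricity in $G$, so $H$ is an equi-eccentric subgraph of the maximal plane graph $G$. Corollary~\ref{cor_centre_iff_equi} then immediately yields that $H$ is the center of some planar graph, as required.

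For sharpness, I would take the order-nine maximal planar graph $G_9$ furnished by Theorem~\ref{thm_mpg_order_nine}. The proof of that theorem exhibits a vertex $u$ with $e(u)=1$, strictly below the diameter $2$, that lies in $\qcc_H(H[f])$ for no face $f$; thus $G_9$ violates the quasi-eccentric face criterion of Theorem~\ref{thm_mpg_qef_crit}. By the necessity half of Theorem~\ref{thm_mpg_qef_crit}, $G_9$ therefore cannot be embedded as an equi-eccentric subgraph of any maximal plane graph (such an embedding would force $u$ to be quasi-eccentric to some face). Invoking Corollary~\ref{cor_centre_iff_equi} once more, I conclude that $G_9$ is not the center of any planar graph, establishing that the bound $8$ cannot be raised to $9$.

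Once the machinery of the earlier sections is granted, the argument is purely a matter of chaining citations, so I anticipate no serious obstacle. The only point demanding a moment's care is in the positive direction: one must notice that being a \emph{subgraph} of an equi-eccentric set already suffices to be equi-eccentric, so that Corollary~\ref{cor_centre_iff_equi} applies to $H$ even though Theorem~\ref{thm_order_eight} only places $H$ \emph{inside} the center of $G$ rather than identifying $H$ with that center exactly.
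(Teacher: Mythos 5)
Your proposal is correct and takes essentially the same route as the paper, which derives this corollary precisely by chaining Theorem \ref{thm_order_eight} (positive direction) and Theorem \ref{thm_mpg_order_nine} (sharpness, via the failure of the quasi-eccentric face criterion for $G_9$) through the equivalence of Corollary \ref{cor_centre_iff_equi}. Your closing observation --- that being a subgraph of the center already makes $H$ an equi-eccentric subgraph, so the equivalence applies even though Theorem \ref{thm_order_eight} does not identify $H$ with the center --- is exactly the step the paper's derivation relies on.
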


\section{Further questions}

Corollary \ref{cor_cen_char} gives a sufficient and necessary condition for a maximal planar graph to be the center of a planar graph. 
Does this result hold for a larger class of graphs than just maximal planar graphs?
We present the following conjecture as a possible answer to this question:

\begin{conj} 
	Let $H$ be a 2-connected plane graph such that each vertex of $H$ is quasi-eccentric to $H[f]$ for some face $f$ of $H$.
	There exists a plane graph $G$ such that $H$ is both the center of $G$ and an isometric subgraph of $G$.
\end{conj}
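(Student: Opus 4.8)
The plan is to lift the entire machinery of the maximal planar case. Fix an integer $\alpha$ large compared with $\text{diam}(H)$ (I expect $\alpha \geq \text{diam}(H)+3$ to again be convenient, to leave slack for forcing $H$ to be exactly the centre rather than merely contained in it). Because $H$ is 2-connected and plane, every face $f$ is bounded by a cycle $C_f$, so ``gluing a gadget into $f$'' is well defined and the union-of-gadgets strategy survives. Concretely, if for each face $f$ I can build a plane supergraph $G_f \supseteq H$ whose new vertices lie strictly inside $f$, with $H$ isometric in $G_f$, every vertex of $\qcc_H(H[f])$ at eccentricity exactly $\alpha$ in $G_f$, and every vertex of $H$ at eccentricity at most $\alpha$ in $G_f$, then the intersections $V(G_{f_1})\cap V(G_{f_2})$ all equal $V(H)$, and Theorem \ref{thm_glueing_graph_lem} yields $e_G(u)=\alpha$ for all $u$ of $H$ with $G=\bigcup_f G_f$.

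For the gadget I would generalise $\Gamma(\delta)$ from a nested family of triangles to a nested family of copies of $C_f$: take $P_\delta \times C_f$, add an apex $s$ joined to the innermost copy, and insert one vertex in each quadrilateral face to keep the interior triangulated and planar. The genuinely new point here is isometry, which in the maximal planar case was free via Lemma \ref{lem:mpgiso}. Since any path of $G$ passing through $f$ must enter and leave through two vertices of $C_f$, the graph $H$ is isometric in $G_f$ precisely when every interior $v_i$–$v_j$ detour has length at least $d_H(v_i,v_j)$. I would secure this by taking $\delta$ large: an interior boundary-to-boundary detour costs at least $2\delta$ minus the length of $C_f$, which exceeds $\text{diam}(H)$ once $\delta$ is large, and since $\delta \approx \alpha - q(H[f])$ may be chosen as large as we like, no shortcut is ever created. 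As before, the depth of $s$ controls which boundary vertices lie at distance exactly $\delta$ from it, and attaching a bounded number of extra vertices near $s$ (the analogues of the $t_p$ and $t_{pq}$ vertices of Observation \ref{obs_dist_with_modif}) realises eccentric vertices at depth $\delta+1$ for those boundary vertices that require them.

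The hard part will be the configuration analysis. The clean six-case split in the proof of Theorem \ref{thm_mpg_qef_crit} rests entirely on the boundary being a triangle: Lemma \ref{lem_mpg_qef_diffofone}, for instance, uses mutual adjacency of the three boundary vertices to force $|d(u,H[f])-d(v,H[f])|\leq 1$ for quasi-eccentric $u,v$. For a face bounded by a $k$-cycle the boundary has diameter up to $\lfloor k/2\rfloor$, so the correct bound is only $|d(u,C_f)-d(v,C_f)|\leq \text{diam}(H[f])$, and the admissible distance vectors of $\qcc_H(H[f])$ become far more numerous. The core obstacle is to prove the right structural constraints on $\qcc_H(H[f])$ (generalising Lemma \ref{lem_mpg_qef_diffofone} through Corollary \ref{cor_no_three_c_rest}) and then to exhibit, for every admissible configuration, a \emph{single} interior gadget that simultaneously hands each quasi-eccentric vertex an eccentric vertex at distance exactly $\alpha$, keeps every vertex of $H$ within $\alpha$ of the gadget, and avoids shortcutting $C_f$. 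I expect this simultaneous realisation, rather than isometry (which the large-$\delta$ trick dispatches), to be the genuinely difficult step, since for large $k$ many boundary vertices must receive their prescribed distances from one common apex.

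Finally, to upgrade $V(H)\subseteq \fe_G(\alpha)$ to $H$ being exactly the centre, I would transfer the separation arguments of Theorems \ref{thm_equi_planar_center} and \ref{thm_equi_in_center}: because $C_f$ separates the interior of $f$ from the rest of $G$, Lemma \ref{lem_sep_different_components} applies verbatim, and after deleting the inserted degree-two vertices (as in Theorem \ref{thm_equi_planar_center}) the remaining inserted vertices should all acquire eccentricity strictly greater than $\alpha$. The one honest caveat is the dominating-face hypothesis of Theorem \ref{thm_equi_in_center}: a 2-connected plane graph meeting the quasi-eccentric hypothesis may still possess a dominating face, and such graphs have very small radius and would have to be dispatched as separate base cases, exactly as the small-radius subcases were handled directly inside the proof of Theorem \ref{thm_order_eight}.
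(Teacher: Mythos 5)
First, a point of status: the statement you were asked to prove is presented in the paper as an open \emph{conjecture} --- the paper contains no proof of it, and all of its machinery (Theorem \ref{thm_glueing_graph_lem}, Theorem \ref{thm_mpg_qef_crit}, Theorems \ref{thm_equi_subgraph_center} and \ref{thm_equi_planar_center}) is developed only for maximal planar $H$, where every face is a triangle. Your proposal is a reasonable research program for attacking the conjecture by transplanting that machinery, and its peripheral components are sound: the glue-a-gadget-into-every-face strategy and Theorem \ref{thm_glueing_graph_lem} do survive for 2-connected plane graphs (faces are bounded by cycles, and the pairwise intersections of the $G_f$ equal $V(H)$), and your circular-progress/large-$\delta$ argument for isometry of $H$ in $G_f$ is believable, which matters since isometry no longer comes for free from Lemma \ref{lem:mpgiso}.

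But the proposal is not a proof, and you say so yourself. The entire content of the conjecture sits in the step you defer: determining the structural constraints on $\qcc_H(H[f])$ when $f$ is bounded by a $k$-cycle, and then exhibiting, for \emph{every} admissible configuration, a single interior gadget that gives each vertex of $\qcc_H(H[f])$ an eccentric vertex at distance exactly $\alpha$ while keeping all of $H$ within distance $\alpha$. In the maximal planar case this is where all the work lies (Lemma \ref{lem_mpg_qef_diffofone} through Corollary \ref{cor_no_three_c_rest}, plus the six-case analysis inside Theorem \ref{thm_mpg_qef_crit}), and every one of those arguments leans on the mutual adjacency of the three boundary vertices. For a general face the problem changes character: in your product gadget the apex satisfies $d(u,s)=d_H(u,C_f)+\delta$ for every vertex $u$ of $H$, so $s$ alone serves only those quasi-eccentric vertices that are all equidistant from $C_f$, and the $t_p$/$t_{pq}$-style corrections near $s$ perturb individual coordinates of the distance vector by $+1$ only; meanwhile, as you yourself observe, once the boundary is a long cycle the distances from quasi-eccentric vertices to $C_f$ (and to individual boundary vertices) may disagree by up to $\mathrm{diam}(H[f])$ rather than by $1$. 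So the gadget family you describe provably cannot cover all configurations, no replacement family is constructed, and it is not even argued that every admissible configuration is realizable at all --- which is precisely the open question. The final upgrade to ``exactly the centre'' inherits the same status: it is conditional on the unproven core, and, as you note, graphs possessing a dominating face would additionally need a separate argument (the analogue of the base cases in Theorem \ref{thm_order_eight}) that is not supplied here.
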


Figure \ref{fig:disccenter} demonstrates that the center of a maximal planar graph may be disconnected.
What conditions are sufficient and / or necessary for a (maximal) planar graph to have a connected center?

\section*{Acknowledgments}
Special thanks to David Erwin and Peter Dankelmann for valuable input.
Thank-you to the NRF for funding this research, grant number 120104, and to the University of Cape Town for their support.
I am grateful for the comments of an anonymous reviewer whose feedback contributed to improving this paper.

\bibliographystyle{amsplain}
\bibliography{GraphTheory}{}

\providecommand{\bysame}{\leavevmode\hbox to3em{\hrulefill}\thinspace}
\providecommand{\MR}{\relax\ifhmode\unskip\space\fi MR }
\providecommand{\MRhref}[2]{%
  \href{http://www.ams.org/mathscinet-getitem?mr=#1}{#2}
}
\providecommand{\href}[2]{#2}
\begin{thebibliography}{10}

\bibitem{buckley:selfcentered}
F.~Buckley, \emph{Self-centered graphs}, Graph theory and its applications:
  {E}ast and {W}est ({J}inan, 1986), Ann. New York Acad. Sci., vol. 576, New
  York Acad. Sci., New York, 1989, pp.~71--78.

\bibitem{bms:center}
F.~Buckley, Z.~Miller, and P.~Slater, \emph{On graphs containing a given graph
  as center}, Journal of Graph Theory \textbf{5} (1981), no.~4, 427--434.

\bibitem{casablanca_centersmpg}
R.~Casablanca, P.~Dankelmann, B.~Du Preez, and D.~Erwin, \emph{Centers of
  planar and maximal planar graphs}, Unpublished Manuscript.

\bibitem{dankelmann:centerpc}
P.~Dankelmann, private communication, 2020.

\bibitem{dankelmann:eccseqmop}
P.~Dankelmann, D.~Erwin, W.~Goddard, S.~Mukwembi, and H.~Swart, \emph{A
  characterisation of eccentric sequences of maximal outerplanar graphs}, The
  Australasian Journal of Combinatorics \textbf{58} (2014), 376--391.

\bibitem{DiestelIII}
R.~Diestel, \emph{Graph theory}, 3rd ed., Springer-Verlag, Heidelberg, New
  York, 2005.

\bibitem{hall:hall_marriage}
P.~Hall, \emph{On representatives of subsets}, Journal of the London
  Mathematical Society (1935), 26--30.

\bibitem{AJAL}
A.~Jarry and A.~Laugier, \emph{Two-connected graphs with given diameter},
  RR-4307, INRIA (2001), no.~inria-00072280.

\bibitem{laskar:centerchordal}
R.~Laskar and D.~Shier, \emph{On powers and centers of chordal graphs},
  Discrete Applied Mathematics \textbf{6} (1983), no.~2, 139--147.

\bibitem{prosk:centermop}
A.~Proskurowski, \emph{Centers of maximal outerplanar graphs}, Journal of Graph
  Theory \textbf{4} (1980), no.~1, 75--79.

\bibitem{whitney:congconn}
H.~Whitney, \emph{Congruent graphs and the connectivity of graphs}, American
  Journal of Mathematics \textbf{54} (1932), no.~1, 150--168.

\end{thebibliography}
	
\end{document}